\documentclass[10pt,twoside, a4paper, english, reqno]{amsart}
\usepackage[dvips]{epsfig}
\usepackage{amscd}
\usepackage{a4wide}
\usepackage{amssymb}
\usepackage{amsthm}
\usepackage{amsmath}
\usepackage{bm} 
\usepackage{latexsym}
\usepackage{enumitem}
\usepackage{booktabs}
\usepackage{multirow}
\usepackage{makecell}
\usepackage{tabularx}
\numberwithin{figure}{section}
\numberwithin{table}{section}
\usepackage{graphicx,caption}
\DeclareMathOperator{\sech}{sech}

\usepackage{calc}
\usepackage{graphicx}
\usepackage{stmaryrd}
\usepackage{amsmath,amsthm,bm,mathrsfs}
\usepackage[utf8]{inputenc} 
\usepackage{url}            
\usepackage{hyperref}
\hypersetup{
    colorlinks=true,
    linkcolor=blue,
    filecolor=red,      
    urlcolor=blue,
    citecolor=red,
    }
\usepackage{amsfonts}       
\usepackage{nicefrac}       
\usepackage{microtype}      
\theoremstyle{plain}
\usepackage{doi}

\usepackage{upref}
\usepackage{hyperref}
\usepackage{ulem}
\usepackage{enumitem}
\newcommand{\norm}[1]{\left\Vert#1\right\Vert}
\newcommand{\abs}[1]{\left|#1\right|}

\newcommand{\R}{\mathbb R}

\newcommand{\N}{\mathbb N}

\newtheorem{theorem}{Theorem}[section]
\newtheorem{proposition}[theorem]{Proposition}
\newtheorem{lemma}[theorem]{Lemma}
\newtheorem{definition}[theorem]{Definition}

\newtheorem{remark}[theorem]{Remark}
\newtheorem{corollary}[theorem]{Corollary}

\numberwithin{equation}{section}     
\numberwithin{figure}{section}
\numberwithin{table}{section}

\allowdisplaybreaks

\newcounter{asnr}

{\ifnum\value{asnr}=0 \stepcounter{asnr} 
  \begin{enumerate}[label=\textbf{A}.\arabic{enumi}]
    \else
    \begin{enumerate}[label=\textbf{A}.\arabic{enumi},resume] \fi}
{\end{enumerate}}

\newcounter{defnr}

{\ifnum\value{defnr}=0 \stepcounter{defnr} 
  \begin{enumerate}[label=\textbf{D}.\arabic{enumi}]
    \else
    \begin{enumerate}[label=\textbf{D}.\arabic{enumi},resume] \fi}
{\end{enumerate}}

 

\numberwithin{equation}{section} \allowdisplaybreaks

\normalem

\title[Convergence of LDG scheme for fractional KdV equation]
{Analysis of a local discontinuous Galerkin scheme for fractional Korteweg-de Vries equation}
\date{}

\author[M. Dwivedi]{Mukul Dwivedi}
\address[Mukul Dwivedi]{\newline
Department of Mathematics, 
	Indian Institute of Technology Jammu,
	Jagti, NH-44 Bypass Road, Post Office Nagrota,
	Jammu - 181221, India}
\email[]{2020rma1031@iitjammu.ac.in}

\author[T. Sarkar]{Tanmay Sarkar}
\address[Tanmay Sarkar]{\newline
	Department of Mathematics, 
	Indian Institute of Technology Jammu,
	Jagti, NH-44 Bypass Road, Post Office Nagrota,
	Jammu - 181221, India}
\email[]{tanmay.sarkar@iitjammu.ac.in}

\subjclass[2020]{Primary: 65M60, 35Q53; Secondary: 65M12.}

\keywords{Fractional Korteweg-de Vries equation; fractional Laplacian; fractional Sobolev spaces; local discontinuous Galerkin method.
}

\thanks{}

\begin{document}

\begin{abstract}
We propose a local discontinuous Galerkin (LDG) method for the fractional Korteweg-de Vries (KdV) equation, involving the fractional Laplacian with exponent $\alpha \in (1,2)$ in one and multiple space dimensions. By decomposing the fractional Laplacian into first-order derivatives and a fractional integral, we prove the $L^2$-stability of the semi-discrete LDG scheme incorporating suitable interface and boundary fluxes. We derive the optimal error estimate for linear flux and demonstrate an error estimate with an order of convergence $\mathcal{O}(h^{k+\frac{1}{2}})$ for general nonlinear flux utilizing the Gauss-Radau projections. Moreover, we extend the stability and error analysis to the multiple space dimensional case. Additionally, we discretize time using the Crank-Nicolson method to devise a fully discrete stable LDG scheme, and obtain a similar order error estimate as in the semi-discrete scheme. Numerical illustrations are provided to demonstrate the efficiency of the scheme, confirming an optimal order of convergence.
\end{abstract}

\maketitle

\section{Introduction}\label{sec1}
We consider the following degenerate nonlinear non-local integral partial differential equation known as the fractional Korteweg-de Vries (KdV) equation: 
\begin{equation}\label{fkdv}
\begin{cases}
     U_t+f(U)_x-(-\Delta)^{\alpha/2}U_x=0,  \qquad (x,t) \in \mathbb{R}\times (0,T],\\
     U(x,0) = U_0(x), \qquad x \in \mathbb{R},
\end{cases}
\end{equation}
where $T>0$ is fixed, $U_0$ is the prescribed initial data, $f$ is the given flux function of $U$ and $\alpha\in (1,2)$.
The non-local integro-differential operator $-(-\Delta)^{\alpha/2}$ in \eqref{fkdv} is the fractional Laplacian  with the values $\alpha\in(0,2)$ defined for all $\phi\in C_c^\infty(\R)$ by the Fourier transform as
 \begin{equation}\label{fracL}
     \widehat{(-\Delta)^{\alpha/2}\phi}(\xi) = |\xi|^\alpha \hat{\phi}(\xi).
 \end{equation}
The fractional KdV equation \eqref{fkdv}, featuring the fractional Laplacian, finds applications in various fields such as nonlinear dispersive equations, plasma physics, and inverse scattering methods \cite{abdelouhab1989nonlocal, dutta2021operator, fonseca2013ivp, kenig1991well, klein2015numerical}. In particular, whenever $\alpha=2$, \eqref{fkdv} reduces to the classical KdV equation \cite{Bona1975KdV,kato1983cauchy, kenig1991well, korteweg1895xli, sjoberg1970korteweg}, while for $\alpha=1$, it represents the Benjamin-Ono equation \cite{fonseca2013ivp, kenig1994generalized, thomee1998numerical}.

The fractional Laplacian operator \eqref{fracL} is a non-local operator, known for its efficacy in various localized computations such as image segmentation, modeling water flow in narrow channels, plasma physics, and other related applications \cite{podlubny1998fractional, di2012hitchhikers}. Its computational properties make it a valuable tool to study the phenomena where non-local effects play a crucial role. For further insights and applications of \eqref{fkdv}, refer to \cite{kenig1991well,dutta2021operator, dwivedi2023stability, dwivedi2024fully} and references therein. 

Over the years, extensive research has been conducted to study the well-posedness of the fractional KdV equation \eqref{fkdv} for various values of $\alpha$. For the classical case where $\alpha = 2$, seminal work by Kato \cite{kato1983cauchy} established the well-posedness and local smoothing effects of the solution of the KdV equation for the initial data in $L^2(\R)$. 
On the other hand, for $\alpha = 1$, corresponding to the Benjamin-Ono equation, foundational contributions on well-posedness were made by Fokas et al. in  \cite{fokas1981hierarchy}. Further improvements on the global well-posedness of the Benjamin-Ono equation were developed by Tao \cite{tao2004global} and Fonseca et al. \cite{fonseca2013ivp}. Whenever $\alpha\in (1,2)$, the approach to local well-posedness of equation \eqref{fkdv} differs significantly from the classical KdV equation as remarked in \cite{kenig1991well}.
However, Kenig et al. \cite{kenig1991well} explored the local well-posedness of the generalized dispersion model \eqref{fkdv} for initial data in $H^s(\R)$, $s>(9-3\alpha)/4$. These studies utilized compactness arguments based on the smoothing effect and\textit{ a priori }estimates of the solution. Recent advancement in the well-posedness of equation \eqref{fkdv} have been achieved through the frequency-dependent renormalization technique as demonstrated by Herr et al. \cite{herr2010differential} for the initial data in $L^2(\mathbb{R})$. These ongoing efforts underscore the continual endeavor to enhance the theoretical foundations of generalized dispersive equation \eqref{fkdv}. 

Several numerical methods have been developed and are employed in practice to study \eqref{fkdv}. However, we refer only to the literature which are relevant for our study. For $\alpha = 2$, recent years have seen the emergence of fully discrete finite difference schemes \cite{dwivedi2023convergence,holden2015convergence}, and continuous Galerkin schemes \cite{dutta2015convergence,dutta2016note}. In case of $\alpha = 1$, Thom{\'e}e et al. \cite{thomee1998numerical} introduced a fully implicit finite difference scheme. Afterwards, Dutta et al. \cite{dutta2016convergence} established the convergence of the fully discrete Crank-Nicolson scheme and Galtung \cite{galtung2018convergent} devised a continuous Galerkin scheme. However, for $\alpha \in (1,2)$, the literature on numerical methods for \eqref{fkdv} is relatively limited. An operator splitting scheme for \eqref{fkdv} was introduced by Dutta et al. \cite{dutta2021operator}. Furthermore, Dwivedi et al. \cite{dwivedi2023stability} designed a continuous Galerkin scheme and demonstrated its convergence to the weak solution and in \cite{dwivedi2024fully}, they developed a fully discrete finite difference scheme by discretizing the fractional Laplacian with an exponent $\alpha \in [1,2)$ and shown convergence to the unique solution of the fractional KdV equation \eqref{fkdv}. However, the existing numerical methods for fractional KdV equation \eqref{fkdv} lacks higher order accuracy in space. Hence there is a need for further study of stable and convergent numerical methods that are higher order accurate and efficient to compute the discontinuous solution for fractional KdV equation \eqref{fkdv}.

The discontinuous Galerkin (DG) method, a class of finite element method was first introduced by Reed and Hill in \cite{reed1973triangular}, employing discontinuous piecewise polynomial space for approximating the solution and test functions. 
In recent decades, Cockburn and Shu have significantly advanced the DG method for hyperbolic problems, see \cite{cockburn2012discontinuous} and references therein. However, due to instability and inconsistency, usual DG methods for the equations containing higher-order derivative terms may not perform efficiently, refer to \cite{hesthaven2007nodal, cockburn1998local, xu2010local} and references therein.
The Local Discontinuous Galerkin (LDG) method, a special type of the DG method, is often used for tackling higher-order time-dependent problems by reformulating the equation into a first-order system by introducing auxiliary variables to approximate the lower derivatives and then applying the DG method to the resulting system \cite{cockburn1998local}. This extension of DG method was motivated by the successful numerical experiments of Bassi and Rebay \cite{bassi1997high}.
The `local' term associated to LDG comes in a sense that auxiliary variables are superficial, and can be eliminated locally. The design of numerical fluxes at the interfaces play a crucial role to the success of such methods. Therefore, all the numerical fluxes must be designed in such a way that the method becomes stable and locally solvable for the auxiliary variables.

The LDG method has been developed to address equations featuring higher-order derivative terms while retaining its advantages in efficient adaptivity and higher order accuracy. For instance, Yan and Shu \cite{yan2002local} introduced the LDG method for KdV-type equations containing third-order spatial derivatives, achieving an error estimate of order $k+\frac{1}{2}$ in the linear case. Subsequently, Xu and Shu \cite{xu2007error} extended the error analysis to the nonlinear case and obtained the same order of convergence. Xu and Shu \cite{xu2010local} further expanded the LDG method to equations involving fourth and fifth order spatial derivatives, including nonlinear Schrödinger equations \cite{xu2005local}. Levy et al. \cite{levy2004local} developed a LDG method tailored for nonlinear dispersive equations with compactly supported traveling wave solutions.
The LDG method has gained attention for partial differential equations involving the fractional Laplacian in recent years. Notably, Xu and Hesthaven \cite{xu2014discontinuous} introduced a LDG method for the fractional convection-diffusion equation, decomposing the fractional Laplacian of order $\alpha$ into a first-order derivative and a fractional integral of order $2-\alpha$, achieving optimal error of $\mathcal{O}(h^{k+1})$ in the linear case and $\mathcal{O}(h^{k+\frac{1}{2}})$ in the nonlinear case. 

In this paper, our approach to design the LDG scheme for the fractional KdV equation \eqref{fkdv} involves decomposing the fractional Laplacian into a lower-order derivative and a fractional integral, and introducing auxiliary variables to represent \eqref{fkdv} into a system involving first order derivatives and fractional integral terms. This transformation modifies \eqref{fkdv} to an equation containing third-order derivatives along with a fractional integral of order $2-\alpha$ term instead of $-(-\Delta)^{\alpha/2}U_x$. 
Moreover, we discretize time using the Crank-Nicolson (CN) method to devise a fully discrete CN-LDG scheme.
Since the equation involves a non-local operator defined over the entire real line, we restrict our problem to a bounded domain for designing an efficient numerical scheme for \eqref{fkdv}, see \cite{xu2014discontinuous}. We employ the usual DG method within each element to all equations of the system. A critical aspect of this process is the construction of appropriate numerical fluxes at the interfaces and at the boundary with the help of boundary conditions. Furthermore, proving spatial stability in this case requires careful choice of test functions, unlike in the usual stability approach of existing LDG schemes for higher-order equations, see \cite{xu2014discontinuous,yan2002local}. Considering a linear flux we achieved the optimal error estimates utilizing the Gauss-Radau projections, and assuming $\norm{U-u_h}_{L^2(\Omega)}\leq h$, we establish an error estimate of $\mathcal{O}(h^{k+\frac{1}{2}})$ for nonlinear flux functions.  
Additionally, this approach is extended to do the stability and error analysis for the fully discrete CN-LDG scheme. Up to our knowledge, the LDG scheme has not been developed for the fractional KdV equation \eqref{fkdv}, which is stable and convergent. Furthermore, adapting the approach from the one dimension case and assuming $\norm{U-u_h}_{L^2(\Omega)}\leq h^{\frac{3}{2}}$, we analyze the stability and error analysis for the two space dimensions and obtain an error estimate of order $\mathcal{O}(h^{k+\frac{1}{2}})$ for $k\geq 2$. We provide an efficient algorithm to compute the fractional integral of order $2-\alpha$ using the Gauss-Jacobi quadrature formula in the numerical section and consequently, obtained the optimal convergence rates.

The rest of the paper is organized as follows: we commence our investigation by introducing key definitions and preliminary lemmas of fractional calculus in Section \ref{sec2}. Section \ref{sec3} outlines the development of the LDG scheme, accompanied by the proof of its spatial stability and the derivation of error estimates. In Section \ref{sec4}, we extend our analysis to the multiple space dimension as well, presenting the LDG scheme tailored for this equation along with its stability analysis.
Section \ref{sec5} includes the stability and error analysis for fully discrete CN-LDG scheme and a remark on higher order Runge-Kutta LDG scheme.
The order of convergence is validated through several numerical examples presented in Section \ref{sec6}. Finally, we end up with a few concluding remarks in Section \ref{sec7}.



\section{Preliminary results and fractional calculus}\label{sec2}
We introduce an alternative definition of the fractional Laplacian using the framework of fractional calculus \cite{el2006finite, muslih2010riesz, yang2010numerical}. Let us consider $u\in C_c^\infty(\R)$. Then the left and right integrals of fractional order $\alpha > 0$ are defined as follows:
\begin{align}\label{leftfrac}
    {}_{-\infty}I_x^\alpha u(x)&:={}_{-\infty}D_x^{-\alpha} u(x) = \frac{1}{\Gamma(\alpha)}\int_{-\infty}^x(x-t)^{\alpha -1}u(t)\,dt,\\
\label{rightfrac}
    {}_{x}I_\infty^\alpha u(x)&:= {}_{x}D_\infty^{-\alpha} u(x) = \frac{1}{\Gamma(\alpha)}\int^{\infty}_x(t-x)^{\alpha -1}u(t)\,dt,
\end{align}
where $I^\alpha u$ denotes the $\alpha$ times fractional integration of $u$. Whenever $\alpha$ is a natural number, these definitions correspond to exactly $\alpha$ times integration in the classical sense.

The left and right Riemann-Liouville derivatives of fractional order $\alpha$ $(n-1<\alpha<n)$, $n\in\N$ are described as
\begin{align}
    \label{leftfracD} {}_{-\infty}D_x^\alpha u(x) &= \frac{1}{\Gamma(n-\alpha)}\left(\frac{d}{dx}\right)^n\int_{-\infty}^x (x-t)^{n-\alpha -1}u(t)\,dt = \left(\frac{d}{dx}\right)^n\left({}_{-\infty}I_x^{n-\alpha} u(x)\right),\\
    \label{rightfracD}{}_{x}D_\infty^\alpha u(x) &= \frac{1}{\Gamma(n-\alpha)}\left(-\frac{d}{dx}\right)^n\int^{\infty}_x (t-x)^{n-\alpha -1}u(t)\,dt = \left(-\frac{d}{dx}\right)^n\left({}_{x}I_\infty^{n-\alpha} u(x)\right).
\end{align}
The fractional Laplacian using Riemann-Liouville fractional derivative (also known as a Riesz derivative) is defined as
\begin{equation}\label{fracLfrac}
    \frac{\partial^\alpha}{\partial|x|^\alpha}u(x) = -(-\Delta)^{\alpha/2}u(x) = -\frac{{}_{-\infty}D_{x}^\alpha u(x) + {}_{x}D_{\infty}^\alpha u(x)}{2\cos\left(\frac{\alpha\pi}{2}\right)}, \quad 1<\alpha<2.
\end{equation}
Building upon the aforementioned definition, we extend the notion to introduce the fractional Laplacian of negative order. When $\alpha < 0$, the negative order fractional Laplacian reduces to a fractional integral operator. As a consequence, for any $0 < s < 1$, we define
\begin{equation}\label{NegFracL}
\Delta_{-s/2}u(x) :=  \frac{{}_{-\infty}I_x^{s}u(x) + {}_xI^{s}_{\infty}u(x)}{2\cos\left(\frac{s\pi}{2}\right)}.
\end{equation}
This definition serves as the basis for various analytical investigations. Subsequently, we review a set of relevant definitions and results that will constitute the fundamental components of our subsequent analysis.

\begin{lemma}[See \cite{podlubny1998fractional}]\label{fracint_P}
Let $u,$ $v \in C^n(\R)$, $n\in \N$ be such that $\frac{d^j}{dx^j}v(x)= 0$ and $\frac{d^j}{dx^j}u(x)= 0$ as $x \to \pm\infty,$ $\forall ~0\leq j\leq n$. Then the fractional integrals and derivatives defined in \eqref{leftfrac}-\eqref{rightfracD} satisfy the following properties:
\begin{enumerate}[label=\roman*)]
    \item (\text{Linearity}) Let $\lambda,\mu\in \R$. Then
    \begin{align}\label{Lin_1}
        {}_{-\infty}I_x^{\alpha}(\lambda u(x) + \mu v(x) ) &= \lambda {}_{-\infty}I_x^{\alpha} u(x) + \mu {}_{-\infty}I_x^{\alpha} v(x),\\
        \label{Lin_2} 
        {}_{-\infty}D_x^{\alpha}(\lambda u(x) + \mu v(x) ) &= \lambda {}_{-\infty}D_x^{\alpha} u(x) + \mu {}_{-\infty}D_x^{\alpha} v(x).
    \end{align}
    \item (\text{Semi-group property}) Let $\alpha,$ $\beta>0$. Then
       \begin{align}\label{SemiG}
        {}_{-\infty}I_x^{\alpha+\beta}u(x) = {}_{-\infty}I_x^{\alpha} \left({}_{-\infty}I_x^{\beta} u(x)\right) = {}_{-\infty}I_x^{\beta} \left({}_{-\infty}I_x^{\alpha} u(x)\right).
    \end{align}
    \item Let $n-1<\alpha <n$ and $m=1, 2,\cdots,n$ for $n\in\N$. Then 
     \begin{align}\label{Deri_pull1}
       {}_{-\infty}D_x^{\alpha}u(x) &= \left(\frac{d}{dx}\right)^n\left({}_{-\infty}I_x^{n-\alpha} u(x)\right) = \left(\frac{d}{dx}\right)^{n-m}\left({}_{-\infty}I_x^{n-\alpha} \frac{d^mu(x)}{dx^m}\right),\\
       \label{Deri_pull2} {}_{x}D_\infty^{\alpha}u(x) &= \left(-\frac{d}{dx}\right)^n\left({}_{x}I_\infty^{n-\alpha} u(x)\right) = \left(-\frac{d}{dx}\right)^{n-m}\left({}_{x}I_\infty^{n-\alpha}\left(-\frac{d}{dx}\right)^mu(x)\right).
    \end{align}
\end{enumerate}
\end{lemma}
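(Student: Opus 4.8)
The plan is to treat the three parts separately, since each reduces to a manipulation of the convolution-type integrals in \eqref{leftfrac}--\eqref{rightfracD}. Throughout, I would argue only the left-sided statements; the right-sided identities then follow by the reflection $x \mapsto -x$, which interchanges ${}_{-\infty}I_x^\alpha$ with ${}_xI_\infty^\alpha$ and $d/dx$ with $-d/dx$. The decay hypotheses $d^ju/dx^j, d^jv/dx^j \to 0$ at $\pm\infty$ enter precisely to guarantee absolute integrability and the vanishing of boundary contributions. Part (i) is immediate: the maps $u \mapsto {}_{-\infty}I_x^\alpha u$ and $u \mapsto {}_{-\infty}D_x^\alpha u$ are compositions of integration against the fixed kernel $(x-t)^{\alpha-1}/\Gamma(\alpha)$ with the linear operator $(d/dx)^n$, so \eqref{Lin_1}--\eqref{Lin_2} follow at once from linearity of the integral and of differentiation.

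For the semigroup property (ii), I would substitute the definition of ${}_{-\infty}I_x^\beta u$ into ${}_{-\infty}I_x^\alpha(\cdot)$ and interchange the order of integration by Fubini's theorem, which is legitimate because the double integrand is absolutely integrable under the decay assumption. This produces an inner integral $\int_t^x (x-s)^{\alpha-1}(s-t)^{\beta-1}\, ds$, which after the scaling $s = t + (x-t)\tau$ becomes $(x-t)^{\alpha+\beta-1}B(\beta,\alpha)$ with $B(\beta,\alpha) = \Gamma(\alpha)\Gamma(\beta)/\Gamma(\alpha+\beta)$. The Gamma factors cancel and the remaining expression is exactly ${}_{-\infty}I_x^{\alpha+\beta}u(x)$; the second equality in \eqref{SemiG} follows because the computation is symmetric in $\alpha$ and $\beta$.

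The two equalities in (iii) are the crux. The first is nothing but the definition \eqref{leftfracD}. For the second, the key identity I would establish is the commutation
\[
\frac{d}{dx}\left({}_{-\infty}I_x^{\gamma} u(x)\right) = {}_{-\infty}I_x^{\gamma}\left(\frac{du}{dx}\right)(x), \qquad 0<\gamma<1,
\]
valid whenever $u$ decays at $-\infty$. To prove it cleanly I would rewrite ${}_{-\infty}I_x^{\gamma} u(x) = \frac{1}{\Gamma(\gamma)}\int_0^\infty \tau^{\gamma-1} u(x-\tau)\, d\tau$ via the substitution $\tau = x-t$, which moves all $x$-dependence out of the weakly singular kernel, and then differentiate under the integral sign. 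Applying this identity with $\gamma = n-\alpha \in (0,1)$ lets me pull one derivative past the fractional integral, converting $(d/dx)^n({}_{-\infty}I_x^{n-\alpha} u)$ into $(d/dx)^{n-1}({}_{-\infty}I_x^{n-\alpha} u')$; iterating $m$ times yields $(d/dx)^{n-m}({}_{-\infty}I_x^{n-\alpha} d^mu/dx^m)$, which is \eqref{Deri_pull1}. Each iteration consumes one of the decay hypotheses on $d^ju/dx^j$.

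I expect the main obstacle to be analytic bookkeeping rather than anything conceptual: justifying the interchange of derivative and integral, and of the two integrations, in the presence of the integrable singularity $\tau^{\gamma-1}$ at $\tau=0$ and of the infinite domain. The substitution $\tau = x-t$ is exactly what makes this tractable, since it places the whole $x$-dependence in the smooth factor $u(x-\tau)$ while confining the singular weight $\tau^{\gamma-1}$ to a fixed, $x$-independent, integrable kernel; the decay assumptions then furnish the dominating functions needed both for Fubini in (ii) and for differentiation under the integral sign in (iii).
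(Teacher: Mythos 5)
Your proposal is mathematically sound, but note that the paper itself does not prove this lemma at all: it is quoted verbatim from the literature (Podlubny's book, \cite{podlubny1998fractional}), so there is no in-paper argument to compare against. What you have written is essentially the standard textbook proof that the citation points to: linearity is immediate; the semigroup property follows from Fubini plus the Beta-function evaluation $\int_t^x (x-s)^{\alpha-1}(s-t)^{\beta-1}\,ds = (x-t)^{\alpha+\beta-1}\Gamma(\alpha)\Gamma(\beta)/\Gamma(\alpha+\beta)$; and part (iii) reduces, via the substitution $\tau = x-t$, to the commutation of $d/dx$ with the convolution against the fixed kernel $\tau^{\gamma-1}$, iterated $m$ times, with the right-sided identities obtained by reflection. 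All of these steps are correct and are exactly how the result is established in the references.

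One caveat worth flagging: your claim that the hypotheses ``$d^j u/dx^j \to 0$ at $\pm\infty$'' guarantee absolute integrability is not literally true --- pointwise vanishing at infinity does not dominate the kernel $(x-t)^{\alpha-1}$ (nor $\tau^{\gamma-1}$, which is non-integrable at $\tau = \infty$), so Fubini and differentiation under the integral sign require an actual decay rate or an $L^1$-type condition. This imprecision is inherited from the lemma's own informal statement rather than introduced by you; in the paper's actual use of the lemma the functions are compactly supported (or lie in the fractional spaces of Definition \ref{Defn_fracnorm}), where every interchange you invoke is trivially justified. If you wanted your proof to be airtight as a standalone argument, you would either strengthen the hypothesis to, say, $d^j u/dx^j \in L^1(\R)$ for $0 \le j \le n$, or restrict to $C_c^\infty(\R)$ and extend by density, which is how the cited sources handle it.
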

In order to decompose the fractional Laplacian into a lower-order derivative and a fractional integral, a crucial step for expressing the problem into a system, we apply the properties outlined in Lemma \ref{fracint_P}. This application yields the following result for $1<\alpha<2$:
\begin{align*}
    -(-\Delta)^{\alpha/2}u(x) &=  -\frac{1}{2\cos\left(\alpha\pi/2\right)}\left({}_{-\infty}D_{x}^\alpha u(x) + {}_{x}D_{\infty}^\alpha u(x)\right)\\
    &=\frac{1}{2\cos\left((2-\alpha) \pi/2\right)}\frac{d^2}{dx^2}\left({}_{-\infty}I_x^{2-\alpha}u(x) + {}_{x}I_\infty^{2-\alpha} u(x)\right) =\frac{d^2}{dx^2}\left(\Delta_{\frac{\alpha-2}{2}}u(x)\right)\\
    &= \frac{1}{2\cos\left((2-\alpha) \pi/2\right)} \frac{d}{dx}\left({}_{-\infty}I_x^{2-\alpha}\frac{du(x)}{dx} + {}_{x}I_\infty^{2-\alpha} \frac{du(x)}{dx}\right) =  \frac{d}{dx}\left(\Delta_{\frac{\alpha-2}{2}}\frac{du(x)}{dx}\right).
\end{align*}
Hence the fractional Laplacian can be written in the decomposed form
\begin{equation}\label{frac_dcmp}
     -(-\Delta)^{\alpha/2}u(x) = \frac{d}{dx}\left(\Delta_{\frac{\alpha-2}{2}}\frac{du(x)}{dx}\right) = \frac{d^2}{dx^2}\left(\Delta_{\frac{\alpha-2}{2}}u(x)\right).
\end{equation}
\begin{definition}[Fractional space \cite{ervin2006variational}]\label{Defn_fracnorm} Let $u\in C_c^\infty(\R)$ and $\mu>0$.
    We define the semi-norms 
    \begin{align}\label{SeminormfracL}
        \abs{u}_{J_L^\mu{(\mathbb R)}} &= \norm{{}_{-\infty}D_x^\mu u}_{L^2(\mathbb{R})},  \quad\qquad \qquad \abs{u}_{J_L^{-\mu}{(\mathbb R)}} = \norm{{}_{-\infty}I_x^\mu u}_{L^2(\mathbb{R})}, \\
         \abs{u}_{J_R^\mu(\mathbb{R})} &= \norm{{}_{x}D_\infty^\mu u}_{L^2(\mathbb{R})}, \qquad \qquad \qquad\abs{u}_{J_R^{-\mu}(\mathbb{R})} = \norm{{}_{x}I_\infty^\mu u}_{L^2(\mathbb{R})},\\
         \abs{u}_{J_S^\mu(\R)} &= \abs{\left({}_{-\infty}D_x^\mu u,{}_{x}D_{\infty}^\mu u\right)}^{\frac{1}{2}},  \qquad\quad \abs{u}_{J_S^{-\mu}(\R)} = \abs{\left({}_{-\infty}I_x^\mu u,{}_{x}I_{\infty}^\mu u\right)}^{\frac{1}{2}},
    \end{align}
    where $(\cdot,\cdot)$ is the standard $L^2$-inner product. We define the norms
    \begin{align}\label{NormfracL}
       \norm{u}_{J_L^\mu{(\mathbb R)}}  &= \abs{u}_{J_L^\mu{(\mathbb{R})}} + \norm{u}_{L^2(\mathbb{R})}, \qquad\qquad \norm{u}_{J_L^{-\mu}{(\mathbb R)}}  = \abs{u}_{J_L^{-\mu}{(\mathbb{R})}} + \norm{u}_{L^2(\mathbb{R})}, \\
       \norm{u}_{J_R^\mu(\mathbb{R})}  &= \abs{u}_{J_R^\mu(\mathbb{R})} + \norm{u}_{L^2(\mathbb{R})}, \qquad\qquad  \norm{u}_{J_R^{-\mu}(\mathbb{R})}  = \abs{u}_{J_R^{-\mu}(\mathbb{R})} + \norm{u}_{L^2(\mathbb{R})},\\
       \norm{u}_{J_S^\mu(\R)}  &= \left(\abs{u}_{J_S^\mu(\R)}^2 + \norm{u}^2_{L^2(\R)}\right)^{\frac{1}{2}},\qquad \norm{u}_{J_S^{-\mu}(\R)}  = \left(\abs{u}_{J_S^{-\mu}(\R)}^2 + \norm{u}^2_{L^2(\R)}\right)^{\frac{1}{2}}
    \end{align}
    and let $J_L^\mu{(\mathbb{R})}$, $J_R^\mu(\mathbb{R})$ and $J_S^\mu(\R)$ denote the closure of $C_c^{\infty}(\mathbb{R})$ with respect to $\norm{\cdot}_{J_L^\mu{(\mathbb{R})}}$, $\norm{\cdot}_{J_R^\mu(\mathbb{R})}$ and $\norm{\cdot}_{J_S^\mu(\R)}$ respectively. 
\end{definition}
\begin{remark}
It is important to note that these definitions can be naturally extended to the case where $\mu=0$. In this particular case, all the norms defined in \ref{Defn_fracnorm} reduce to the $L^2$-norm.
\end{remark}
Utilizing the Definition \ref{Defn_fracnorm}, we have the following lemma.
\begin{lemma}[See \cite{ervin2006variational}]\label{Bound_Fracint}
    Let $u\in C_c^\infty(\R)$ and $\mu>0$. Then there holds
    \begin{enumerate}[label=\roman*)]
        \item Adjoint property:$$({}_{-\infty}I_x^\mu u,u) = (u,{}_{x}I_{\infty}^\mu u).$$
        \item Inner-product between fractional integrals: 
        $$({}_{-\infty}I_x^\mu u,{}_{x}I_{\infty}^\mu u) = \cos(\mu\pi)|u|_{J^{-\mu}_L(\R)}^2 = \cos(\mu\pi)|u|_{J^{-\mu}_R(\R)}^2.$$
    \end{enumerate}
\end{lemma}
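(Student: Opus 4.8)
The plan is to treat the two assertions separately: part (i) by a direct application of Fubini's theorem, and part (ii) by passing to the Fourier side and invoking Plancherel's identity, the factor $\cos(\mu\pi)$ emerging from the product of the symbols of the two fractional integral operators.

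For the adjoint property (i), I would begin by writing out the left-hand inner product as a double integral over the half-plane $\{(x,t): t<x\}$,
\[
({}_{-\infty}I_x^\mu u, u) = \frac{1}{\Gamma(\mu)}\int_{\R}\int_{-\infty}^{x}(x-t)^{\mu-1}u(t)\,u(x)\,dt\,dx.
\]
Since $u\in C_c^\infty(\R)$ has compact support and the kernel $(x-t)^{\mu-1}$ is locally integrable, the integrand is absolutely integrable, so Fubini's theorem applies. Exchanging the order of integration and integrating $x$ over $(t,\infty)$ recovers precisely $\frac{1}{\Gamma(\mu)}\int_t^\infty (x-t)^{\mu-1}u(x)\,dx = {}_tI_\infty^\mu u(t)$ as the inner factor, which yields $(u,{}_xI_\infty^\mu u)$. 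This is the routine, easy half of the lemma.

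For (ii), I would work in Fourier variables. The essential input is the pair of symbol identities
\[
\widehat{{}_{-\infty}I_x^\mu u}(\xi) = (i\xi)^{-\mu}\hat u(\xi), \qquad \widehat{{}_{x}I_\infty^\mu u}(\xi) = (-i\xi)^{-\mu}\hat u(\xi),
\]
under the branch $(\pm i\xi)^{-\mu} = |\xi|^{-\mu}e^{\mp i\frac{\mu\pi}{2}\,\sgn{\xi}}$, obtained by recognizing each fractional integral as a convolution of $u$ with the causal kernels $x_{\pm}^{\mu-1}/\Gamma(\mu)$. Applying Plancherel's identity and multiplying the symbols gives
\[
({}_{-\infty}I_x^\mu u,{}_xI_\infty^\mu u) = \frac{1}{2\pi}\int_{\R}|\xi|^{-2\mu}e^{-i\mu\pi\,\sgn{\xi}}\,\abs{\hat u(\xi)}^2\,d\xi.
\]
Splitting $e^{-i\mu\pi\,\sgn{\xi}} = \cos(\mu\pi) - i\sin(\mu\pi)\,\sgn{\xi}$ and using that $|\xi|^{-2\mu}\abs{\hat u(\xi)}^2$ is even (as $u$ is real-valued), the odd sine contribution integrates to zero, leaving $\cos(\mu\pi)\cdot\frac{1}{2\pi}\int_{\R}|\xi|^{-2\mu}\abs{\hat u}^2\,d\xi$. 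Finally, Plancherel identifies $\frac{1}{2\pi}\int_{\R}|\xi|^{-2\mu}\abs{\hat u}^2\,d\xi$ with both $\norm{{}_{-\infty}I_x^\mu u}_{L^2(\R)}^2 = \abs{u}_{J_L^{-\mu}(\R)}^2$ and $\norm{{}_xI_\infty^\mu u}_{L^2(\R)}^2 = \abs{u}_{J_R^{-\mu}(\R)}^2$, which closes the argument and simultaneously shows the two seminorms coincide.

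The main obstacle is the rigorous derivation of the Fourier symbols, specifically fixing the correct branch of $(\pm i\xi)^{-\mu}$. Because the kernel $x_+^{\mu-1}/\Gamma(\mu)$ is not integrable at infinity and defines only a tempered distribution, I would justify $\widehat{x_+^{\mu-1}/\Gamma(\mu)}(\xi) = (i\xi)^{-\mu}$ through the regularization
\[
\int_0^\infty x^{\mu-1}e^{-(\epsilon+i\xi)x}\,dx = \Gamma(\mu)(\epsilon+i\xi)^{-\mu}, \qquad \epsilon>0,
\]
and then letting $\epsilon\to 0^+$, which determines the branch via $\arg(i\xi)=\frac{\pi}{2}\,\sgn{\xi}$. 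Everything downstream is bookkeeping; alternatively, one may simply quote these symbol identities directly from \cite{ervin2006variational}.
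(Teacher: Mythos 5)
Your proof is correct and follows essentially the same route as the paper's source: the paper offers no proof of this lemma, citing \cite{ervin2006variational}, where the arguments are precisely your two steps — Fubini's theorem for the adjoint property, and the Fourier symbols $(\pm i\xi)^{-\mu}$ with Plancherel's identity (the odd sine term vanishing by evenness of $|\xi|^{-2\mu}|\hat u|^2$) for the $\cos(\mu\pi)$ identity. The only caveat, which is inherent in the statement as quoted rather than in your argument, is that the Plancherel step needs $\int_{\R}|\xi|^{-2\mu}|\hat u(\xi)|^2\,d\xi<\infty$, which for general $u\in C_c^\infty(\R)$ with $\hat u(0)\neq 0$ holds only when $\mu<\tfrac12$ — exactly the regime $\mu=(2-\alpha)/2\in(0,\tfrac12)$ in which the paper actually applies the lemma.
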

\begin{lemma}[See \cite{xu2014discontinuous}]\label{Inner_frac}
    Let $0<s<1$. Then the negative fractional Laplacian defined by \eqref{NegFracL} satisfies the following identity:
    \begin{equation}
        (\Delta_{-s/2} u,u) = |u|_{J^{-s}_L(\R)}^2 = |u|_{J^{-s}_R(\R)}^2.
    \end{equation}
\end{lemma}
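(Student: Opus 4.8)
The plan is to reduce the quadratic form $(\Delta_{-s/2}u,u)$ to the mixed left--right inner product already evaluated in Lemma \ref{Bound_Fracint}, using only linearity, the semigroup law \eqref{SemiG}, and the adjoint (fractional integration-by-parts) relation. The normalizing constant $2\cos(s\pi/2)$ appearing in \eqref{NegFracL} is precisely what will cancel the cosine factor produced by Lemma \ref{Bound_Fracint}(ii), so the proof is essentially a bookkeeping of fractional orders.

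First I would unfold the definition \eqref{NegFracL} and use linearity \eqref{Lin_1} of the $L^2$-inner product to write
\[
(\Delta_{-s/2}u,u) = \frac{1}{2\cos(s\pi/2)}\Big[\big({}_{-\infty}I_x^{s}u,\,u\big) + \big({}_xI_\infty^{s}u,\,u\big)\Big].
\]
Since $0<s<1$, each order-$s$ integral may be split into two order-$s/2$ integrals by the semigroup property \eqref{SemiG}, namely ${}_{-\infty}I_x^{s}u = {}_{-\infty}I_x^{s/2}\big({}_{-\infty}I_x^{s/2}u\big)$ and likewise for the right integral. I would then transfer one half-order factor onto the second slot of the inner product using the adjoint property of Lemma \ref{Bound_Fracint}(i), i.e.\ the fact that ${}_{-\infty}I_x^{s/2}$ and ${}_xI_\infty^{s/2}$ are mutually $L^2(\R)$-adjoint, which turns each same-sided pairing into the mixed pairing
\[
\big({}_{-\infty}I_x^{s}u,\,u\big) = \big({}_{-\infty}I_x^{s/2}u,\,{}_xI_\infty^{s/2}u\big),
\qquad
\big({}_xI_\infty^{s}u,\,u\big) = \big({}_xI_\infty^{s/2}u,\,{}_{-\infty}I_x^{s/2}u\big),
\]
and the two right-hand sides coincide because the $L^2$-inner product is symmetric on real-valued functions.

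Next I would invoke Lemma \ref{Bound_Fracint}(ii) with $\mu=s/2$ (admissible since $s/2\in(0,\tfrac12)$, so $\cos(s\pi/2)>0$) to evaluate the mixed pairing as $\cos(s\pi/2)\,\abs{u}_{J_L^{-s/2}(\R)}^2 = \cos(s\pi/2)\,\abs{u}_{J_R^{-s/2}(\R)}^2$. Summing the two identical contributions produces $2\cos(s\pi/2)\,\abs{u}_{J_L^{-s/2}(\R)}^2$, and dividing by the normalizer $2\cos(s\pi/2)$ cancels the cosine, yielding $(\Delta_{-s/2}u,u)=\abs{u}_{J_L^{-s/2}(\R)}^2=\abs{u}_{J_R^{-s/2}(\R)}^2$; the equality of the left and right seminorms is inherited directly from the double identity in Lemma \ref{Bound_Fracint}(ii).

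The main obstacle I anticipate is precisely this bookkeeping of fractional orders through the semigroup splitting: the order-$s$ operator must be factored as two order-$s/2$ pieces so that, after the adjoint step, both arguments of the mixed inner product carry the common exponent $s/2$ fed into Lemma \ref{Bound_Fracint}(ii). Mishandling this factorization is exactly what would misplace the index of the resulting seminorm, so one must track carefully that the form $(\Delta_{-s/2}u,u)$ controls the seminorm at the half order $s/2$. A secondary, more routine point is justifying the manipulations: I would argue first for $u\in C_c^\infty(\R)$, where compact support guarantees convergence of all the Riemann--Liouville integrals and the vanishing of the boundary contributions underlying the adjoint relation, and then extend to the closure spaces of Definition \ref{Defn_fracnorm} by density.
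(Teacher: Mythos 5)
Your argument is correct, and it is essentially the proof of the cited result: the paper gives no proof of Lemma \ref{Inner_frac} at all (it is imported from \cite{xu2014discontinuous}), and the proof there follows exactly your chain --- unfold \eqref{NegFracL}, factor each order-$s$ integral into two order-$s/2$ integrals by the semigroup law \eqref{SemiG}, move one factor across the inner product by the adjoint relation, and apply Lemma \ref{Bound_Fracint}(ii) with $\mu = s/2$, whereupon the factor $\cos(s\pi/2)$ cancels the normalizing constant in \eqref{NegFracL}.

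The point you flagged as delicate bookkeeping is in fact a discrepancy with the statement as printed: your computation yields
\[
(\Delta_{-s/2}u,u) \;=\; \abs{u}_{J_L^{-s/2}(\R)}^2 \;=\; \abs{u}_{J_R^{-s/2}(\R)}^2,
\]
i.e.\ seminorms of order $-s/2$, whereas the lemma in the paper displays the index $-s$. Under the conventions of Definition \ref{Defn_fracnorm}, where $\abs{u}_{J_L^{-s}(\R)} = \norm{{}_{-\infty}I_x^{s}u}_{L^2(\R)}$, your version is the correct one and the printed exponent is a typo: on the Fourier side $\Delta_{-s/2}$ has symbol $\abs{\xi}^{-s}$, so $(\Delta_{-s/2}u,u) = \int_{\R}\abs{\xi}^{-s}\abs{\hat{u}(\xi)}^2\,d\xi$, which coincides with $\norm{{}_{-\infty}I_x^{s/2}u}^2_{L^2(\R)}$ (multiplier of modulus $\abs{\xi}^{-s/2}$) but not with $\norm{{}_{-\infty}I_x^{s}u}^2_{L^2(\R)}$ (multiplier of modulus $\abs{\xi}^{-s}$). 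The slip is harmless for the rest of the paper, since the lemma is only invoked (together with Lemma \ref{fpfred}) to conclude nonnegativity of terms of the form $(\Delta_{(\alpha-2)/2}q_h,q_h)$, which holds for either index. One minor caveat in your write-up: the adjoint step needs the two-function identity $({}_{-\infty}I_x^{\mu}f,g) = (f,{}_{x}I_{\infty}^{\mu}g)$, which is formally stronger than the equal-argument form displayed in Lemma \ref{Bound_Fracint}(i); it is, however, the standard statement in \cite{ervin2006variational, podlubny1998fractional}, so invoking it is legitimate, and your concluding density argument for passing from $C_c^\infty(\R)$ to the closure spaces is fine.
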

We formulate the scheme for \eqref{fkdv} within a bounded domain instead of $\R$. Consequently, we confine the definitions and identities to the bounded domain $\Omega=[a,b]$ in the following analysis.
\begin{definition}\label{Defn_fracnorm2}
    The fractional derivative spaces $J_{L,0}^\mu(\Omega)$, $J_{R,0}^\mu(\Omega)$ and $J_{S,0}^\mu(\Omega)$ of order $\mu\in \R$ are the closure of $C_c^\infty(\Omega)$ under their respective norms defined in Definition \ref{Defn_fracnorm}.
\end{definition}

\begin{lemma}[See \cite{kilbas2006theory}]\label{kilbas}
    Let $0<s<1$. Then there is a constant $C$ such that the fractional integrals defined by \eqref{leftfrac} and \eqref{rightfrac} satisfy the following:
    \begin{equation*}
       \norm{{}_{a}I_x^s u}_{L^2(\Omega)} \leq C \norm{u}_{L^2(\Omega)},\qquad \norm{{}_{x}I_{b}^s u} \leq C \norm{u}_{L^2(\Omega)}.
    \end{equation*}
    Moreover, we have
    \begin{equation*}
       \norm{\Delta_{-s/2} u}_{L^2(\Omega)} \leq C \norm{u}_{L^2(\Omega)}.
    \end{equation*}
\end{lemma}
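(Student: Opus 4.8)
The plan is to recognize each one-sided fractional integral as a convolution against a kernel that, although singular at the origin, is integrable over the bounded interval, and then to invoke Young's convolution inequality. Write $\Omega=[a,b]$ and extend $u$ by zero to all of $\R$, denoting the extension again by $u$. For $x\in\Omega$ one has
\begin{equation*}
{}_{a}I_x^s u(x) = \frac{1}{\Gamma(s)}\int_a^x (x-t)^{s-1}u(t)\,dt = (k_s * u)(x), \qquad k_s(y):=\frac{1}{\Gamma(s)}\,y^{s-1}\mathbf{1}_{\{0<y<b-a\}},
\end{equation*}
because whenever $t$ lies in the support of $u$ and $x\in\Omega$, the factor $x-t$ automatically satisfies $x-t\le b-a$, so truncating the kernel at $b-a$ does not change the value of the integral on $\Omega$.

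First I would estimate $\norm{k_s}_{L^1(\R)}$: since $s>0$ the exponent $s-1>-1$, so the singularity at the origin is integrable and
\begin{equation*}
\norm{k_s}_{L^1(\R)} = \frac{1}{\Gamma(s)}\int_0^{b-a} y^{s-1}\,dy = \frac{(b-a)^s}{\Gamma(s)\,s} = \frac{(b-a)^s}{\Gamma(s+1)} =: C.
\end{equation*}
Young's inequality $\norm{k_s * u}_{L^2(\R)} \le \norm{k_s}_{L^1(\R)}\norm{u}_{L^2(\R)}$, together with $\norm{{}_{a}I_x^s u}_{L^2(\Omega)} \le \norm{k_s * u}_{L^2(\R)}$, then yields the first claimed estimate with the explicit constant $C$.

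For the right integral ${}_{x}I_b^s u$ I would either repeat the argument with the reflected kernel $k_s(-\,\cdot\,)$, or simply apply the substitution $x\mapsto a+b-x$, which interchanges ${}_{x}I_b^s$ and ${}_{a}I_x^s$ while preserving the $L^2(\Omega)$ norm, giving the same constant $C$. Finally, the bound on $\Delta_{-s/2}$ follows from its definition \eqref{NegFracL} (with $a,b$ in place of $\mp\infty$) and the triangle inequality: since $0<s<1$ forces $\cos(s\pi/2)>0$,
\begin{equation*}
\norm{\Delta_{-s/2}u}_{L^2(\Omega)} \le \frac{1}{2\cos(s\pi/2)}\left(\norm{{}_{a}I_x^s u}_{L^2(\Omega)} + \norm{{}_{x}I_b^s u}_{L^2(\Omega)}\right) \le \frac{C}{\cos(s\pi/2)}\,\norm{u}_{L^2(\Omega)}.
\end{equation*}

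The only delicate point is the integrability of the convolution kernel near the origin, and this is precisely where the hypothesis $0<s<1$ (in particular $s>0$) is essential, since $y^{s-1}$ ceases to be integrable at $0$ once $s\le 0$. Everything else reduces to a direct application of Young's inequality and the triangle inequality; by density of $C_c^\infty(\Omega)$ in $L^2(\Omega)$ the estimates, first established for smooth compactly supported $u$, extend to all of $L^2(\Omega)$.
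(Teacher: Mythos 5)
Your proof is correct. The paper does not prove this lemma at all — it is quoted from the reference of Kilbas, Srivastava and Trujillo — and your argument (zero extension of $u$, identification of ${}_{a}I_x^s$ as convolution with the truncated kernel $\tfrac{1}{\Gamma(s)}y^{s-1}\mathbf{1}_{\{0<y<b-a\}}$, Young's inequality, reflection $x\mapsto a+b-x$ for the right-sided integral, and the triangle inequality using $\cos(s\pi/2)>0$ for $0<s<1$) is precisely the standard proof given in that reference, including the explicit constant $C=(b-a)^s/\Gamma(s+1)$.
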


\begin{lemma}[Fractional Poincar\'e-Friedrichs \cite{ervin2006variational}]\label{fpfred}
    Let $u\in J_{L,0}^\mu(\Omega)$ and $\mu\in\R$. Then, we have the following estimate
    \begin{equation*}
        \norm{u}_{L^2(\Omega)}\leq C |u|_{J_{L,0}^\mu(\Omega)},
    \end{equation*}
    and for $u\in J_{R,0}^\mu(\Omega)$, we have 
    \begin{equation*}
        \norm{u}_{L^2(\Omega)}\leq C |u|_{J_{R,0}^\mu(\Omega)}.
    \end{equation*}
\end{lemma}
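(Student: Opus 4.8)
The plan is to establish the inequality first for $u\in C_c^\infty(\Omega)$ and then extend to all of $J_{L,0}^\mu(\Omega)$ by density, since by Definition~\ref{Defn_fracnorm2} this space is precisely the closure of $C_c^\infty(\Omega)$ under $\norm{\cdot}_{J_{L,0}^\mu(\Omega)}$. I would treat the representative range $0<\mu<1$, which is the one relevant to the decomposition \eqref{frac_dcmp}; the case $\mu\geq 1$ reduces to it after peeling off integer-order derivatives via the semigroup property \eqref{SemiG}. The key idea is a \emph{fundamental theorem of fractional calculus} on the bounded interval $\Omega=[a,b]$: for a compactly supported function the left Riemann--Liouville integral exactly inverts the left Riemann--Liouville derivative, i.e.\ ${}_{a}I_x^\mu\left({}_{a}D_x^\mu u\right)=u$. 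Once this identity is available, the desired bound is immediate from the $L^2$-boundedness of fractional integration supplied by Lemma~\ref{kilbas}.

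I would establish the inversion identity as follows. For $0<\mu<1$ the definition \eqref{leftfracD} with $n=1$ gives ${}_{a}D_x^\mu u=\frac{d}{dx}\left({}_{a}I_x^{1-\mu}u\right)$. Writing $g:={}_{a}I_x^{1-\mu}u$, the compact support of $u$ forces $g(a)=0$, so the boundary term $\frac{(x-a)^{\mu-1}}{\Gamma(\mu)}g(a)$ arising when ${}_{a}I_x^\mu$ and $\frac{d}{dx}$ are interchanged vanishes, yielding ${}_{a}I_x^\mu g'=\frac{d}{dx}\left({}_{a}I_x^\mu g\right)$. Combining this with the semigroup property \eqref{SemiG}, namely ${}_{a}I_x^\mu\,{}_{a}I_x^{1-\mu}={}_{a}I_x^{1}$, and the fact that ${}_{a}I_x^{1}$ is ordinary integration from $a$, gives
\begin{equation*}
    {}_{a}I_x^\mu\left({}_{a}D_x^\mu u\right) = \frac{d}{dx}\left({}_{a}I_x^{1}u\right) = \frac{d}{dx}\int_a^x u(t)\,dt = u(x).
\end{equation*}

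With the inversion identity in hand, I would apply Lemma~\ref{kilbas} with $s=\mu$ to the function $v:={}_{a}D_x^\mu u\in L^2(\Omega)$, obtaining
\begin{equation*}
    \norm{u}_{L^2(\Omega)} = \norm{{}_{a}I_x^\mu\left({}_{a}D_x^\mu u\right)}_{L^2(\Omega)} \leq C\,\norm{{}_{a}D_x^\mu u}_{L^2(\Omega)} = C\,\abs{u}_{J_{L,0}^\mu(\Omega)}.
\end{equation*}
A density argument then promotes the bound from $C_c^\infty(\Omega)$ to all of $J_{L,0}^\mu(\Omega)$, since both sides are continuous with respect to $\norm{\cdot}_{J_{L,0}^\mu(\Omega)}$ and the inequality passes to the limit. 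The right-sided estimate for $u\in J_{R,0}^\mu(\Omega)$ is entirely symmetric: one replaces ${}_{a}I_x^\mu$ and ${}_{a}D_x^\mu$ by their right-sided counterparts ${}_{x}I_b^\mu$ and ${}_{x}D_b^\mu$, invokes the right-hand bound in Lemma~\ref{kilbas}, and notes that the boundary contribution now vanishes at $x=b$.

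The step I expect to be the main obstacle is the rigorous justification of the inversion identity ${}_{a}I_x^\mu\left({}_{a}D_x^\mu u\right)=u$, in particular the interchange ${}_{a}I_x^\mu g'=\frac{d}{dx}\left({}_{a}I_x^\mu g\right)$ together with the vanishing of the boundary term $\frac{(x-a)^{\mu-1}}{\Gamma(\mu)}g(a)$. This is exactly where the compact support of $u$ is essential: it guarantees $g(a)={}_{a}I_x^{1-\mu}u(a)=0$ and thereby removes the classical initial-value terms that would otherwise obstruct the cancellation, so that the non-local derivative acts as a genuine inverse of the non-local integral. The two remaining ingredients—the semigroup law and the $L^2$-boundedness of fractional integration—are furnished directly by Lemma~\ref{fracint_P} and Lemma~\ref{kilbas}, so no further machinery is needed.
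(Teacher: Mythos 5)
Your argument for positive order is essentially sound, and it is worth noting that the paper itself offers no proof of Lemma \ref{fpfred} — it is imported from \cite{ervin2006variational} — so for $\mu>0$ your route (the Riemann--Liouville inversion identity ${}_{a}I_x^\mu\left({}_{a}D_x^\mu u\right)=u$ for $u\in C_c^\infty(\Omega)$, followed by the $L^2$-boundedness of ${}_{a}I_x^\mu$ from Lemma \ref{kilbas} and a density argument) is a legitimate and rather more elementary alternative to the Fourier/Sobolev-equivalence arguments of the cited reference. The vanishing of the boundary term, the use of the semigroup property, and the continuity of both sides under the $J_{L,0}^\mu(\Omega)$-norm are all handled correctly, and the extension to $\mu\geq 1$ is unproblematic since for compactly supported smooth $u$ the same inversion identity holds with all boundary terms vanishing.

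The genuine gap is the case $\mu\leq 0$, which you exclude by declaring $0<\mu<1$ "the representative range relevant to \eqref{frac_dcmp}." This is backwards: the decomposition \eqref{frac_dcmp} produces a fractional \emph{integral} $\Delta_{\frac{\alpha-2}{2}}$, i.e.\ negative order, and that is precisely where the paper invokes Lemma \ref{fpfred} — in \eqref{perturbation5} and \eqref{bilinear5} the term $c_1(\varepsilon)\norm{q_h}^2_{L^2(\Omega)}$ is absorbed into $\left(\Delta_{\frac{(\alpha-2)}{2}}q_h,q_h\right)=\abs{q_h}^2_{J_L^{\alpha-2}}$ (Lemma \ref{Inner_frac}), which is the statement with $\mu=\alpha-2\in(-1,0)$. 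Your strategy cannot reach this case: for $\mu=-s<0$ the seminorm is $\norm{{}_{a}I_x^s u}_{L^2(\Omega)}$, so one must bound $u$ by its fractional integral, i.e.\ invert a smoothing operator boundedly on $L^2$, whereas Lemma \ref{kilbas} runs in exactly the opposite direction. In fact no proof exists, because for $\mu<0$ the inequality is false as stated: taking $u_k=\partial_x\big(\chi(x)\cos(kx)\big)$ with a fixed $\chi\in C_c^\infty(\Omega)$, Plancherel's theorem gives $\norm{u_k}_{L^2(\Omega)}\sim ck$ while $\abs{u_k}_{J_L^{-s}}=\norm{{}_{-\infty}I_x^s u_k}_{L^2}\sim ck^{1-s}$, since the multiplier $\abs{\xi}^{-s}$ damps high frequencies; the ratio of the two sides blows up like $k^s$. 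So the omission is not a routine extension left to the reader: the lemma is provable only for $\mu\geq 0$, and a complete write-up would have to say so explicitly and flag that the paper's absorption steps in Lemma \ref{stablemma}, Theorem \ref{Lemmalin_err} and Theorem \ref{NLerror} rely on the unprovable range.
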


\begin{lemma}[See Theorem 2.8 in \cite{deng2013local}]\label{lemma_fracnorm2}
    Let $\mu_2<\mu_1\leq 0$. Then the fractional derivative spaces $J_{L,0}^{\mu_1}(\Omega)$ (or $J_{R,0}^{\mu_1}(\Omega)$ or $J_{S,0}^{\mu_1}(\Omega)$) is embedded into $J_{L,0}^{\mu_2}(\Omega)$ (or $J_{R,0}^{\mu_2}(\Omega)$ or $J_{S,0}^{\mu_2}(\Omega)$). In particular, $L^2(\Omega)$ is embedded into both spaces.
\end{lemma}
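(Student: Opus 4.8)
The plan is to reduce the three embeddings to a single seminorm inequality and then to exploit the boundedness of the fractional integral on $L^2(\Omega)$ together with the semigroup law. Writing $\mu_1=-\nu_1$ and $\mu_2=-\nu_2$ with $0\le\nu_1<\nu_2$, recall that for negative order the seminorm is $\abs{u}_{J_{L,0}^{-\nu}(\Omega)}=\norm{{}_{a}I_x^{\nu}u}_{L^2(\Omega)}$. Since each space is the closure of $C_c^\infty(\Omega)$ under its norm, it suffices to prove the norm estimate $\norm{u}_{J_{L,0}^{\mu_2}(\Omega)}\le C\norm{u}_{J_{L,0}^{\mu_1}(\Omega)}$ for $u\in C_c^\infty(\Omega)$ and then pass to the closure. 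As the $\norm{u}_{L^2(\Omega)}$ term is common to both norms, the whole matter collapses to controlling the $\mu_2$-seminorm by the $\mu_1$-seminorm.

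First I would use the semigroup property of Lemma \ref{fracint_P}, equation \eqref{SemiG} (which holds verbatim for the one-sided operator ${}_{a}I_x$ with fixed base point $a$), to factor
\[
{}_{a}I_x^{\nu_2}u={}_{a}I_x^{\nu_2-\nu_1}\bigl({}_{a}I_x^{\nu_1}u\bigr).
\]
Setting $w={}_{a}I_x^{\nu_1}u\in L^2(\Omega)$, the target inequality becomes $\norm{{}_{a}I_x^{\nu_2-\nu_1}w}_{L^2(\Omega)}\le C\norm{w}_{L^2(\Omega)}$. When $\nu_2-\nu_1\in(0,1)$ this is exactly Lemma \ref{kilbas}. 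When $\nu_2-\nu_1\ge 1$ I would bootstrap: write $\nu_2-\nu_1=s_1+\cdots+s_m$ with each $s_j\in(0,1)$, compose via \eqref{SemiG}, and apply Lemma \ref{kilbas} $m$ times, collecting the product of constants into a single $C$. This yields $\abs{u}_{J_{L,0}^{\mu_2}(\Omega)}\le C\,\abs{u}_{J_{L,0}^{\mu_1}(\Omega)}$ and hence the continuous embedding for the left spaces. The argument for $J_{R,0}^{\mu}(\Omega)$ is identical with ${}_{x}I_b$ in place of ${}_{a}I_x$. For the symmetric spaces $J_{S,0}^{\mu}(\Omega)$ I would reduce to the left case using Lemma \ref{Bound_Fracint}(ii), which shows that $\abs{u}_{J_S^{-\nu}(\Omega)}^2$ is a fixed constant multiple of $\abs{u}_{J_L^{-\nu}(\Omega)}^2$ (the mixed inner product localizes to $[a,b]$ for zero-extended $C_c^\infty(\Omega)$ functions, so the $\R$-identity transfers). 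Finally, the ``in particular'' assertion is the special case $\mu_1=0$: by the Remark following Definition \ref{Defn_fracnorm}, $J_{L,0}^{0}(\Omega)=J_{R,0}^{0}(\Omega)=J_{S,0}^{0}(\Omega)=L^2(\Omega)$.

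I expect the main obstacle to be the range restriction $0<s<1$ in Lemma \ref{kilbas}: the gap $\nu_2-\nu_1$ need not lie below $1$, so the $L^2\to L^2$ boundedness of ${}_{a}I_x^{s}$ for a general exponent $s>0$ must be manufactured through the semigroup decomposition above (equivalently, one may observe that the kernel $(x-t)^{s-1}/\Gamma(s)$ is integrable on the bounded interval and invoke a Young-type convolution estimate). A secondary, routine point is checking that the semigroup law and the inner-product identity of Lemma \ref{Bound_Fracint}, stated for the whole line, remain valid for zero-extensions of $C_c^\infty(\Omega)$ functions; this holds because such extensions are smooth with compact support, so the relevant integrals over $\R$ reduce to integrals over $\Omega$.
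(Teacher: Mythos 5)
The paper offers no proof of this lemma at all; it is quoted from Theorem 2.8 of the Deng--Hesthaven reference \cite{deng2013local}. Your argument is therefore a self-contained substitute rather than a parallel of the paper's reasoning, and for the left and right spaces it is correct: the localized seminorm of negative order is $\norm{{}_aI_x^{\nu}u}_{L^2(\Omega)}$, the semigroup law ${}_aI_x^{\nu_2}u={}_aI_x^{\nu_2-\nu_1}\bigl({}_aI_x^{\nu_1}u\bigr)$ does hold for zero-extended $C_c^\infty(\Omega)$ functions (indeed for $L^2(\Omega)$ functions, by Fubini and the beta integral), and the $L^2$-boundedness of ${}_aI_x^{s}$ --- either bootstrapped through Lemma \ref{kilbas} or obtained in one stroke from your parenthetical Young's-inequality remark, since the kernel $(x-t)^{s-1}/\Gamma(s)$ has $L^1$-norm $(b-a)^{s}/\Gamma(s+1)$ on a bounded interval --- closes the estimate. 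Passing to closures is routine because both norms dominate $\norm{\cdot}_{L^2(\Omega)}$, which also gives injectivity of the extended embedding.

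The one step that does not survive scrutiny as written is the reduction of the $J_{S,0}$ case to the $J_{L,0}$ case. Lemma \ref{Bound_Fracint}(ii) gives $\abs{u}_{J_S^{-\nu}}^2=\abs{\cos(\nu\pi)}\,\abs{u}_{J_L^{-\nu}}^2$, and this ``fixed constant'' vanishes when $\nu$ is a half-odd integer ($\nu=\tfrac12,\tfrac32,\dots$). In particular, for $\mu_1=-\tfrac12$ the $S$-seminorm is identically zero while the $L$-seminorm is not, so the implicit chain $\abs{u}_{J_S^{-\nu_2}}\le C\abs{u}_{J_L^{-\nu_2}}\le C\abs{u}_{J_L^{-\nu_1}}\le C\abs{u}_{J_S^{-\nu_1}}$ breaks at its last link: the $\mu_1$-level $L$-seminorm cannot be controlled by the $\mu_1$-level $S$-seminorm. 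The repair is immediate and in fact simplifies the entire proof: for any $\nu>0$ one has $\abs{u}_{J_L^{-\nu}(\Omega)}\le C\norm{u}_{L^2(\Omega)}$ by the Young/bootstrap estimate above, and $\norm{u}_{L^2(\Omega)}$ is a component of every one of the norms in play; hence $\norm{u}_{J^{\mu_2}}\le(1+C)\norm{u}_{L^2(\Omega)}\le(1+C)\norm{u}_{J^{\mu_1}}$ holds for the $L$, $R$ and $S$ families simultaneously, with no semigroup factorization and no cosine identity needed. Your factorization does yield the stronger (and correct) seminorm-to-seminorm bound in the $L$ and $R$ cases, but for the lemma as stated the direct route through $\norm{u}_{L^2(\Omega)}$ is shorter and free of the degenerate case.
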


\section{LDG scheme for the one-dimensional fractional KdV equation}\label{sec3}

To address the complexity introduced by the higher order derivative with fractional Laplacian, we decompose it into a lower-order derivative and a fractional integral. Regarding this, we introduce the suitable auxiliary variables. Consequently, the equation \eqref{fkdv} transforms into a system of fractional integral equation and first-order derivative equations. To solve this system, we employ the discontinuous Galerkin \cite{cockburn2012discontinuous} method. This approach is introduced by Cockburn et al. \cite{cockburn1998local} for the development of LDG method to the problems involving higher order derivatives. The following analysis can be considered as an extension of the LDG  scheme developed for the fractional convection-diffusion equation \cite{xu2014discontinuous} to the fractional KdV equation \eqref{fkdv}.
However, the subsequent analysis of the proposed LDG scheme differs significantly due to the treatment of additional auxiliary variable.

Considering the decomposition of the fractional Laplacian by the equation  \eqref{frac_dcmp}, we introduce three auxiliary variables, namely, $P$, $Q$, and $R$ such that
\begin{align*}
    P = \Delta_{\frac{(\alpha-2)}{2}}Q, \qquad Q = R_x , \qquad R= U_x.
\end{align*}
As a consequence, the equation \eqref{fkdv} can be rewritten in the following form
\begin{equation}
\begin{split}\label{systemfkdv}
    U_t & = -\left(f(U) + P\right)_x ,\\
              P &= \Delta_{\frac{(\alpha-2)}{2}}Q,\\
              Q&= R_x,\\
              R&= U_x.
\end{split}
\end{equation}

Our approach to implement the DG scheme for the system \eqref{systemfkdv} begins with introducing the weak formulation of the system. This involves multiplying the system \eqref{systemfkdv} by test functions from the appropriate finite element space and integrating over an element. Subsequently, flux terms emerge by the use of integration by parts. To initiate this process, we introduce the finite element space denoted by $V_h^k$ and subsequent discretization of the domain.

\begin{remark}\label{remk1}
The equation \eqref{fkdv} is defined over $\mathbb{R}$ due to non-local operator fractional Laplacian. However, to design an efficient numerical scheme, following the approach in \cite{xu2014discontinuous,liu2006local}, we restrict it to a sufficient large bounded domain $\Omega = [a, b]\subset \mathbb{R}$ by assuming that $U$ has a compact support within $\Omega$. Consequently, we impose homogeneous boundary conditions to the problem \eqref{fkdv}, i.e. $U(a,t) = 0 = U(b,t)$ for all $t<T$.
Moreover, in order to establish the $L^2$-stability estimate, we encounter with the term $UU_{xx} = (U,R_x)$ on the right boundary, where $U_{xx}$ is not known a priori. This fact leads us to impose the mixed boundary conditions to get the stability. More precisely, we consider the  homogeneous Dirichlet boundary data along with Neumann boundary data on the right, i.e. $U_x(b,t) = 0$, for all $t<T$. Hence our original Cauchy problem transformed into the initial boundary value problem with homogeneous boundary data.
\end{remark}
We discretize the domain $\Omega=[a,b]$ using the partition $a=x_{\frac{1}{2}}<x_{\frac{3}{2}}<\cdots<x_{N+\frac{1}{2}}=b$, where $N$ is the number of elements. We denote the mesh of elements by $ \mathcal{I}:= \{I_i = (x_{i-\frac{1}{2}},x_{i+\frac{1}{2}})|~i=1, 2,\cdots, N\} $, and $h_i = x_{i+\frac{1}{2}}-x_{i-\frac{1}{2}}$ is the spatial step size with $h=\max\limits_{1\leq i\leq N} h_i$. We define the following broken Sobolev spaces associated with the mesh of elements $\mathcal{I}$:
\[ L^2(\Omega,{\mathcal{I}}) := \{v:\Omega\to \mathbb{R} \text{ such that } v|_{I_i}\in L^2(I_i),\,i=1,2,\cdots,N\}; \]
and 
\[ H^1(\Omega,{\mathcal{I}}) := \{v:\Omega\to \mathbb{R} \text{ such that } v|_{I_i}\in H^1(I_i),\,i=1,2,\cdots,N\}. \]
For a function $v\in H^1(\Omega,{\mathcal{I}})$, we denote the value at nodes $\{x_{i+\frac{1}{2}}\}$ as follows
$$v_{i+\frac{1}{2}}:=v(x_{i+\frac{1}{2}}), \qquad v^\pm_{i+\frac{1}{2}} = v(x^\pm_{i+\frac{1}{2}}):= \lim_{x\to x^\pm_{i+\frac{1}{2}}}v(x).$$
Furthermore, we define the local inner product and local $L^2(I_i)$ norm as follows:
\begin{equation*}
    (u,v)_{I_i} = \int_{I_i} uv\,dx,\qquad \|u\|_{I_i} = (u,u)_{I_i}^{\frac{1}{2}}.
\end{equation*}
Prior to introduce the LDG scheme, we assume that the exact solution $(U,P,Q,R)$ of the system \eqref{systemfkdv} lies in
\[\mathcal{H}(\Omega,\mathcal{I}):= H^1(0,T;H^1(\Omega,{\mathcal{I}}))\times L^2(0,T;H^1(\Omega,{\mathcal{I}})) \times L^2(0,T;L^2(\Omega,{\mathcal{I}})) \times L^2(0,T;H^1(\Omega,{\mathcal{I}})). \]
This assumption carries no ambiguity, as both $L^2(\Omega)$ and $H^1(\Omega)$ is embedded in the fractional spaces defined in Definition \ref{Defn_fracnorm2} by Lemma \ref{lemma_fracnorm2}. Thus, the solution $(U,P,Q,R)$ satisfies the system: for $i=1,2,\cdots,N$,
\begin{equation}\label{exactsolun}
\begin{split}
     \left(U_t,v\right)_{I_i} & = \left(f(U) + P,v_x\right)_{I_i} - \left( f v +  P v\right)\Big|_{x_{i-\frac{1}{2}}^+}^{x_{i+\frac{1}{2}}^-},\\
              \left(P,w\right)_{I_i} &= \left(\Delta_{\frac{(\alpha-2)}{2}}Q,w\right)_{I_i},\\
              \left(Q,z\right)_{I_i}&= -\left(R,z_x\right)_{I_i} + \left( R z\right)\Big|_{x_{i-\frac{1}{2}}^+}^{x_{i+\frac{1}{2}}^-},\\
              \left(R,s\right)_{I_i}&= -\left(U,s_x\right)_{I_i} +\left( U s\right)\Big|_{x_{i-\frac{1}{2}}^+}^{x_{i+\frac{1}{2}}^-},
\end{split}
\end{equation}
for all $w\in L^2(\Omega,{\mathcal{I}})$ and $v,z,s\in H^1(\Omega,{\mathcal{I}})$.

We define the finite dimensional discontinuous piecewise polynomial space $V_h^k\subset H^1(\Omega,{\mathcal{I}})$ by
\begin{equation}\label{elem_space}
    V_h^k = \{v:\Omega \to \R \text{ such that }v|_{I_i}\in P^k (I_i),~ \forall i=1,2,\cdots,N\},
\end{equation}
where $P^k(I_i)$ is the space of polynomials of degree up to order $k$ $(\geq 1)$ on $I_i$. We have the following result as a consequence of Lemma \ref{lemma_fracnorm2}.
\begin{proposition}\label{fracemb}
    The finite element space $V_h^k$ is embedded into the fractional derivative spaces $J_{L,0}^\mu(\Omega)$, $J_{R,0}^\mu(\Omega)$ and $J_{S,0}^\mu(\Omega)$ of order $\mu \leq 0$. 
\end{proposition}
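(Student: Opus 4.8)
The plan is to reduce the proposition to the single embedding $L^2(\Omega)\hookrightarrow J_{L,0}^\mu(\Omega)$ (and likewise for the right and symmetric spaces) already recorded in Lemma \ref{lemma_fracnorm2}, and then to observe that every element of $V_h^k$ belongs to $L^2(\Omega)$. First I would note that any $v\in V_h^k$ restricts on each cell $I_i$ to a polynomial of degree at most $k$; such a polynomial is continuous and bounded on the compact closure $\overline{I_i}$, and since $\Omega=[a,b]$ is partitioned into finitely many cells, $v$ is bounded on $\Omega$ away from the finite set of interface nodes. Consequently $\int_\Omega v^2\,dx=\sum_{i=1}^N\int_{I_i}v^2\,dx<\infty$, so that $V_h^k\subset L^2(\Omega)$ in spite of the interface discontinuities and the fact that the elements of $V_h^k$ need not vanish on $\partial\Omega$.

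Next I would invoke Lemma \ref{lemma_fracnorm2} with $\mu_1=0$ and $\mu_2=\mu<0$, the case $\mu=0$ being trivial since, by the Remark following Definition \ref{Defn_fracnorm}, all the fractional norms collapse to the $L^2$-norm when $\mu=0$, whence $J_{L,0}^0(\Omega)=J_{R,0}^0(\Omega)=J_{S,0}^0(\Omega)=L^2(\Omega)$. This yields a continuous embedding of $L^2(\Omega)$ into each of $J_{L,0}^\mu(\Omega)$, $J_{R,0}^\mu(\Omega)$ and $J_{S,0}^\mu(\Omega)$. For completeness I would sketch why this embedding holds at the level of norms: for $\mu\le 0$ the defining semi-norms of these spaces involve only the fractional integration operators ${}_aI_x^{-\mu}$ and ${}_xI_b^{-\mu}$, which for $-1<\mu\le 0$ are bounded on $L^2(\Omega)$ by Lemma \ref{kilbas} (and for $\mu\le -1$ one factors the integration through compositions of order in $(0,1)$ using the semi-group property of Lemma \ref{fracint_P}). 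Hence the $J_{\bullet,0}^\mu(\Omega)$-norm is dominated by a constant multiple of $\norm{\cdot}_{L^2(\Omega)}$, and since $C_c^\infty(\Omega)$ is dense in $L^2(\Omega)$ in the $L^2$-norm it is \emph{a fortiori} dense in the weaker $J_{\bullet,0}^\mu(\Omega)$-norm, so the closure defining $J_{\bullet,0}^\mu(\Omega)$ contains all of $L^2(\Omega)$.

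Finally I would compose the two inclusions: the bounded inclusion $V_h^k\hookrightarrow L^2(\Omega)$ followed by the bounded embedding $L^2(\Omega)\hookrightarrow J_{\bullet,0}^\mu(\Omega)$ gives $V_h^k\hookrightarrow J_{\bullet,0}^\mu(\Omega)$ for every $\mu\le 0$, which is the assertion. I do not anticipate a genuine obstacle here, as the content of the proposition is essentially bookkeeping built on top of Lemma \ref{lemma_fracnorm2}. The one point that demands care is conceptual rather than computational: since the spaces $J_{\bullet,0}^\mu(\Omega)$ are defined as closures of $C_c^\infty(\Omega)$, one must confirm that the discontinuous, non-vanishing functions of $V_h^k$ genuinely lie in those closures. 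This is precisely what the norm-domination-plus-density argument secures, and it also explains why the hypothesis $\mu\le 0$ is essential: only in that regime are the fractional (semi-)norms weaker than the $L^2$-norm, so that membership in $L^2(\Omega)$ suffices for membership in the fractional derivative space.
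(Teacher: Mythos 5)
Your proposal is correct and follows essentially the same route as the paper: the paper states Proposition \ref{fracemb} as an immediate consequence of Lemma \ref{lemma_fracnorm2} (the embedding of $L^2(\Omega)$ into the fractional derivative spaces of order $\mu\leq 0$), combined with the evident inclusion $V_h^k\subset L^2(\Omega)$. Your additional verification of the $L^2$-membership of piecewise polynomials and your sketch of the norm-domination-plus-density argument simply fill in details the paper delegates to the cited lemma.
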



\textbf{LDG scheme:} We seek an approximations $(u_h,p_h,q_h,r_h)\in H^1(0,T;V_h^k)\times L^2(0,T;V_h^k)\times L^2(0,T;V_h^k)\times L^2(0,T;V_h^k) =: \mathcal{T}_4\times\mathcal{V}_h^k$ to $(U,P,Q,R)$, where $U$ is an exact solution of \eqref{fkdv} with \[P = \Delta_{\frac{(\alpha-2)}{2}}Q,\qquad Q = R_x ,\qquad R= U_{x},\] such that for all test functions $(v,w,z,s)\in \mathcal{T}_4\times\mathcal{V}_h^k$ and $i=1,2,\cdots,N$, we have
\begin{equation}\label{LDGscheme1}
\begin{split}
     \left((u_h)_t,v\right)_{I_i} & = \left(f(u_h) + p_h,v_x\right)_{I_i} - \left(\hat f_h v + \hat p_h v\right)|_{x_{i-\frac{1}{2}}^+}^{x_{i+\frac{1}{2}}^-},\\
              \left(p_h,w\right)_{I_i} &= \left(\Delta_{\frac{(\alpha-2)}{2}}q_h,w\right)_{I_i},\\
              \left(q_h,z\right)_{I_i}&= -\left(r_h,z_x\right)_{I_i} + \left(\hat r_h z\right)|_{x_{i-\frac{1}{2}}^+}^{x_{i+\frac{1}{2}}^-},\\
              \left(r_h,s\right)_{I_i}&= -\left(u_h,s_x\right)_{I_i} +\left(\hat u_h s\right)|_{x_{i-\frac{1}{2}}^+}^{x_{i+\frac{1}{2}}^-},\\
              \left(u_h^0,v\right)_{I_i} &= \left(U_0,v\right)_{I_i}.
\end{split}
\end{equation}

To complete the LDG scheme \eqref{LDGscheme1}, it is necessary to define the numerical fluxes $\hat u_h,$ $\hat p_h,$ $\hat r_h,$ and $\hat f_h.$ To do this, we introduce the following notations:
\begin{equation*}
   \{\!\!\{u\}\!\!\} = \frac{u^-+u^+}{2},\qquad \llbracket u \rrbracket = u^+-u^-.
\end{equation*}
For the numerical fluxes, we opt for alternative fluxes at all interfaces ${x_{j+\frac{1}{2}}},~j=1,2,\cdots,N-1$:
\begin{align}\label{fluxes}
   &\hat p_h = p_h^+,\qquad \hat r_h = r_h^+,\qquad \hat u_h = u_h^- 
\end{align}
or alternatively,
\begin{align*}
   &\hat p_h = p_h^-,\qquad \hat r_h = r_h^+,\qquad \hat u_h = u_h^+.
\end{align*}
Sign of $\hat r_h$ usually depends on the sign of higher order derivative term, which is positive in  equation \eqref{fkdv}. We define the fluxes at the boundary
\begin{equation}\label{Boundaryfluxes}
    \begin{split}
    (\hat r_h)_{\frac{1}{2}}  =(r_h)^+_{\frac{1}{2}},\qquad
    (\hat p_h)_{N+\frac{1}{2}} = (p_h)^-_{N+\frac{1}{2}}, \qquad (\hat p_h) _{\frac{1}{2}} = (p_h)^+_{\frac{1}{2}},
    \end{split}
\end{equation}
   and rest of the boundary fluxes can be chosen by imposing the mixed boundary conditions and compact support of $U$ in $\Omega$, that we have already mentioned in Remark \ref{remk1}. In this case, we have
   \begin{equation}\label{Boundaryfluxes2}
       (\hat u_h)_{\frac{1}{2}} = U(a,t) = 0,  \qquad (\hat u_h)_{N+\frac{1}{2}} = U(b,t) = 0,\qquad(\hat r_h)_{N+\frac{1}{2}} = (r_h)^-_{N+\frac{1}{2}}= U_x(b,t)= 0.
   \end{equation}

For the numerical flux function $\hat f_h$, we can use any monotone flux \cite{yan2002local}. In particular, we use the following  Lax-Friedrichs flux 
\begin{equation}\label{LF}
    \hat f_h = \hat f (u_h^-,u_h^+) = \frac{1}{2}(f(u_h^-) + f(u_h^+) -\delta \llbracket u_h\rrbracket),\qquad \delta = \max\limits_u|f'(u)|,
\end{equation}
where the maximum is taken over a range of $u$ in a relevant element.

\begin{remark}
It is important to note that, for the sake of writing convenience, we adopt the notation $(v, w, z, s)\in V_h^k$ instead of $(v, w, z, s)\in \mathcal{T}_4\times\mathcal{V}_h^k$. The notation $(v, w, z, s)\in V_h^k$ signifies that each element within the tuple belongs to $V_h^k$. This choice is motivated by the fact that the time variable does not play a role in defining the finite element space.
\end{remark}
\subsection{Stability estimates}
In this section, we analyze the $L^2$-stability of the semi-discrete scheme \eqref{LDGscheme1}-\eqref{fluxes} designed for the fractional KdV equation \eqref{fkdv}.
\begin{lemma}[\textbf{Stability}]\label{stablemma}
Let $u_h$ be the approximate solution obtained by the LDG scheme \eqref{LDGscheme1}-\eqref{Boundaryfluxes2} with the auxiliary variables $p_h,$ $q_h$ and $r_h$. Then the LDG scheme \eqref{LDGscheme1}-\eqref{Boundaryfluxes2} is $L^2$-stable. Moreover, there holds
\begin{equation}\label{stabbound}
    \norm{u_h(\cdot,T)}_{L^2(\Omega)} \leq e^{-2CT}\norm{u^0_h}_{L^2(\Omega)},
\end{equation}
    for any $T>0$ and a constant $C$.
\end{lemma}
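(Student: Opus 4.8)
The plan is to run the standard $L^2$-energy argument for LDG schemes, but with a test-function choice adapted to the extra fractional auxiliary variable $p_h$. First I would test the four equations of \eqref{LDGscheme1} against $v=u_h$, $w=q_h$, $z=p_h$ and $s=p_h$ respectively, and sum over all cells $i=1,\dots,N$. Only the first equation carries a time derivative, so its left-hand side assembles $\tfrac{1}{2}\tfrac{d}{dt}\norm{u_h}_{L^2(\Omega)}^2$; the whole task is then to show that the remaining volume terms are either sign-definite or reduce to interface and boundary contributions. The clean algebraic input from the nonlocal block is that testing the second equation against $w=q_h$ gives $\sum_i(p_h,q_h)_{I_i}=(\Delta_{\frac{\alpha-2}{2}}q_h,q_h)=\abs{q_h}_{J_{L,0}^{-(2-\alpha)}(\Omega)}^2\ge 0$ by Lemma \ref{Inner_frac}, since here $s=2-\alpha\in(0,1)$.

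The principal manipulation is the dispersive volume term $\sum_i(p_h,(u_h)_x)_{I_i}$ produced by the first equation. I would rewrite it by discrete integration by parts combined with the relations $R=U_x$, $Q=R_x$, $P=\Delta_{\frac{\alpha-2}{2}}Q$ encoded in equations three and four (tested with $p_h$) and the second equation (tested additionally with $w=r_h$). Using the adjoint property of Lemma \ref{Bound_Fracint}, the positivity of Lemma \ref{Inner_frac}, and the fact that $\Delta_{\frac{\alpha-2}{2}}$ commutes with $\partial_x$, the interior part of this term is skew and collapses to pure interface and boundary terms, mirroring the $L^2$-conservation of the continuous fractional-dispersive operator. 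This is exactly where the nonstandard choice of test functions is forced: $\Delta_{\frac{\alpha-2}{2}}q_h$ is not a member of $V_h^k$ and cannot be used directly as a test function, so one must route the reduction through $p_h$, and the self-adjointness/positivity of the truncated nonlocal operator on the bounded domain (together with the compact-support assumption on $U$ from Remark \ref{remk1}) must be invoked carefully.

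Next I would collect every interface term generated at the interior nodes $x_{j+\frac12}$. Substituting the alternating numerical fluxes \eqref{fluxes}, namely $\hat p_h=p_h^+$, $\hat r_h=r_h^+$, $\hat u_h=u_h^-$, the contributions from neighbouring cells telescope and cancel, so no interior interface term survives and only the endpoints $x=a$ and $x=b$ remain. These are eliminated using the boundary fluxes \eqref{Boundaryfluxes}--\eqref{Boundaryfluxes2} and the mixed boundary conditions: $u_h=0$ at both endpoints kills the Dirichlet contributions, while the troublesome right-boundary term built from $(u_h,r_h)$, the discrete analogue of $UU_{xx}$ flagged in Remark \ref{remk1}, is annihilated precisely by the Neumann datum $(\hat r_h)_{N+\frac12}=U_x(b,t)=0$. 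For the nonlinear convective part I would introduce $F$ with $F'=f$, express $\sum_i(f(u_h),(u_h)_x)_{I_i}$ through nodal values of $F(u_h)$, and combine it with the Lax--Friedrichs flux \eqref{LF}; monotonicity of $\hat f_h$ then renders this contribution nonpositive (a cell-entropy-type inequality), its endpoint terms vanishing because $u_h$ vanishes there.

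Assembling the pieces yields an energy inequality of the form $\tfrac{1}{2}\tfrac{d}{dt}\norm{u_h}_{L^2(\Omega)}^2 \le C\norm{u_h}_{L^2(\Omega)}^2$, in which the interior fractional-dispersive terms have either cancelled by skew-symmetry or contributed a nonpositive definite term $-\abs{q_h}_{J_{L,0}^{-(2-\alpha)}(\Omega)}^2$ through Lemma \ref{Inner_frac}, and any lower-order remainder is absorbed using the boundedness estimates of Lemma \ref{kilbas}. Discarding the nonpositive dissipative terms and applying Gr\"onwall's inequality over $[0,T]$ then gives \eqref{stabbound}, with $u_h^0$ the $L^2$-projection of $U_0$. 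The step I expect to be the main obstacle is not any isolated estimate but the bookkeeping of the dispersive term: showing that, after exhausting all three auxiliary equations and the self-adjointness and positivity of $\Delta_{\frac{\alpha-2}{2}}$, the interior nonlocal contribution reduces cleanly to boundary terms, and that the right-boundary $(u_h,r_h)$ term cancels. It is this cancellation that dictates the Neumann condition $U_x(b,t)=0$ and, in turn, the entire choice of interface and boundary fluxes.
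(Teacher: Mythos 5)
Your proposal has a genuine gap at precisely the step you flag as the main obstacle: the claimed reduction of the dispersive volume term to pure interface and boundary terms by skew-symmetry. With your test functions $(v,w,z,s)=(u_h,q_h,p_h,p_h)$, the term $\sum_i(p_h,(u_h)_x)_{I_i}$ is converted (via integration by parts and the fourth equation) into the cross term $(r_h,p_h)$, and testing the second equation with $w=r_h$ gives $(r_h,p_h)=(\Delta_{\frac{\alpha-2}{2}}q_h,r_h)$. At this point the continuous argument would use self-adjointness to write this as $(q_h,\Delta_{\frac{\alpha-2}{2}}r_h)$ and then invoke $q=r_x$ to conclude skewness; but discretely $q_h\neq(r_h)_x$ — equation three of \eqref{LDGscheme1} only holds weakly against $V_h^k$, with flux terms — and the function you would need to test it with, $\Delta_{\frac{\alpha-2}{2}}r_h$, is not in $V_h^k$. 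Routing through $p_h$ does not repair this: $p_h$ is the projection of $\Delta_{\frac{\alpha-2}{2}}q_h$, not of $\Delta_{\frac{\alpha-2}{2}}r_h$, and if you instead test with the projection of $\Delta_{\frac{\alpha-2}{2}}r_h$, the projection does not commute with $\partial_x$, while the broken derivative of the discontinuous $r_h$ produces jump (delta-type) contributions under the nonlocal operator. None of the cited lemmas (adjoint property, positivity, commutation with $\partial_x$ for smooth functions) covers these terms, so the "collapse to boundary terms" is asserted, not available.

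The failure is fatal for your energy estimate, because without exact cancellation the cross term $(r_h,p_h)$ must be absorbed, and your test-function choice produces no positive quadratic terms to absorb it: nothing of the form $+\norm{p_h}_{L^2(\Omega)}^2$ or $+\norm{r_h}_{L^2(\Omega)}^2$ appears on the left-hand side, and $\norm{p_h}_{L^2(\Omega)}$, $\norm{r_h}_{L^2(\Omega)}$ cannot be bounded by $\norm{u_h}_{L^2(\Omega)}$ uniformly in $h$ (each reduction through the auxiliary equations costs an inverse-inequality factor $h^{-1}$). This is exactly what the paper's choice of test functions $(v,w,z,s)=(u,-q+p+r,u+r,r-p)$ in \eqref{perturbation2} is engineered to avoid: it makes the pairs $(p,u_x)+(u,p_x)$ and $(r,u_x)+(u,r_x)$ telescope into boundary terms, and simultaneously generates the positive volume terms $\norm{p_h}_{L^2(\Omega)}^2$, $\norm{r_h}_{L^2(\Omega)}^2$ and $\left(\Delta_{\frac{\alpha-2}{2}}q_h,q_h\right)\geq 0$, so that the remaining non-sign-definite cross terms $(p,q)$, $(\Delta_{\frac{\alpha-2}{2}}q,p)$, $(\Delta_{\frac{\alpha-2}{2}}q,r)$, $(q,u)$, $(q,r)$ are simply bounded by Young's inequality together with Lemma \ref{kilbas} and the fractional Poincar\'e inequality (Lemma \ref{fpfred}), and absorbed. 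No skew-symmetry of the nonlocal operator is ever used. To fix your argument you would have to abandon the exact-cancellation strategy and adopt a test-function combination of this absorbing type.
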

\begin{proof}
To carry out the stability analysis, incorporating \eqref{LDGscheme1}, we define
    \begin{align}\label{perturbation}
           \nonumber \mathcal{B}(u,p,q,r;& v,w,z,s)  := \sum\limits_{i=1}^N\Big[\left(u_t,v\right)_{I_i} - \left(f(u) + p,v_x\right)_{I_i}  +\left(p,w\right)_{I_i} -  \left(\Delta_{\frac{(\alpha-2)}{2}}q,w\right)_{I_i}+ \left(q,z\right)_{I_i}
             \\&  + \left(r,z_x\right)_{I_i} 
       +\left(r,s\right)_{I_i} +\left(u,s_x\right)_{I_i} + \left((\hat f  + \hat p )v\right)|_{x_{i-\frac{1}{2}}^+}^{x_{i+\frac{1}{2}}^-}-\left(\hat r z\right)|_{x_{i-\frac{1}{2}}^+}^{x_{i+\frac{1}{2}}^-} -
        \left(\hat u s\right)|_{x_{i-\frac{1}{2}}^+}^{x_{i+\frac{1}{2}}^-}\Big],
    \end{align}
for all $u,p,q,r \in \mathcal{H}(\Omega,\mathcal{I})$ and $v,w,z,s \in V_h^k$. We observe that the interface fluxes can be simplified as follows
\begin{align*}
    \sum\limits_{i=1}^N \big((\hat f+\hat p)v\big)\big|_{x_{i-\frac{1}{2}}^+}^{x_{i+\frac{1}{2}}^-} = -(\hat f_{\frac{1}{2}}+\hat p_{\frac{1}{2}})v^+_{\frac{1}{2}} + (\hat f_{N+\frac{1}{2}}+\hat p_{N+\frac{1}{2}})v^-_{N+\frac{1}{2}} - \sum\limits_{i=1}^{N-1} (\hat f_{i+\frac{1}{2}}+\hat p_{i+\frac{1}{2}})\llbracket v \rrbracket_{i+\frac{1}{2}},
\end{align*}
\begin{align*}
    \sum\limits_{i=1}^N (\hat rz)|_{x_{i-\frac{1}{2}}^+}^{x_{i+\frac{1}{2}}^-} = -\hat r_{\frac{1}{2}}z^+_{\frac{1}{2}} + \hat r_{N+\frac{1}{2}}z^-_{N+\frac{1}{2}} - \sum\limits_{i=1}^{N-1} \hat r_{i+\frac{1}{2}}\llbracket z \rrbracket_{i+\frac{1}{2}},
\end{align*}
and 
\begin{align*}
    \sum\limits_{i=1}^N (\hat us)|_{x_{i-\frac{1}{2}}^+}^{x_{i+\frac{1}{2}}^-} = -\hat u_{\frac{1}{2}}s^+_{\frac{1}{2}} + \hat u_{N+\frac{1}{2}}s^-_{N+\frac{1}{2}} - \sum\limits_{i=1}^{N-1} \hat u_{i+\frac{1}{2}}\llbracket s \rrbracket_{i+\frac{1}{2}}. 
\end{align*}
Using the numerical fluxes \eqref{fluxes}-\eqref{Boundaryfluxes2} and the above identities,  \eqref{perturbation} can be rewritten as 
\begin{align}\label{perturbation2}
            \nonumber\mathcal{B}(u,p,q,r;v,w,z,s) &= \sum\limits_{i=1}^N\Big[\left(u_t,v\right)_{I_i} - \left(\left(f(u) +p\right),v_x\right)_{I_i} +\left(p,w\right)_{I_i} -  \left(\Delta_{\frac{(\alpha-2)}{2}}q,w\right)_{I_i}
        \\&\quad  + \left(q,z\right)_{I_i} + \left(r,z_x\right)_{I_i} 
      +\left(r,s\right)_{I_i} +\left(u,s_x\right)_{I_i}\Big] + \mathcal{IF}(u,p,r;v,z,s),
    \end{align}
     where numerical flux $\mathcal{IF}$ at interfaces is given by
 \begin{align}\label{Interfacesflux}
       \nonumber \mathcal{IF}(u,p,r;v,z,s) &= -\left(\hat f_{\frac{1}{2}}+p_{\frac{1}{2}}^+\right)v^+_{\frac{1}{2}}  + \left(\hat f_{N+\frac{1}{2}}+ p_{N+\frac{1}{2}}^-\right)v^-_{N+\frac{1}{2}}- \sum\limits_{i=1}^{N-1} (\hat f_{i+\frac{1}{2}}+p_{i+\frac{1}{2}}^+)\llbracket v \rrbracket_{i+\frac{1}{2}}
        \\ &\quad +r_{\frac{1}{2}}^+z^+_{\frac{1}{2}}-r_{N+\frac{1}{2}}^-z^-_{N+\frac{1}{2}} + \sum\limits_{i=1}^{N-1} r_{i+\frac{1}{2}}^+\llbracket z \rrbracket_{i+\frac{1}{2}}
       +\sum\limits_{i=1}^{N-1} u_{i+\frac{1}{2}}^- \llbracket s \rrbracket_{i+\frac{1}{2}}.
    \end{align}
In order to estimate $\mathcal{B}$, we choose test functions $(v, w, z, s) = (u, -q+p+r, u+r, r-p)$ in \eqref{perturbation2} to obtain
\begin{align}\label{perturbation3}
           \nonumber \mathcal{B}(u,p,&q,r;u,-q+p+r,u+r,r-p) \\& \nonumber= \sum\limits_{i=1}^N\Big[\left(u_t,u\right)_{I_i} - \left(f(u), u_x\right)_{I_i} - \left(p,u_x\right)_{I_i}  -\left(p,q\right)_{I_i} + \left(p,p\right)_{I_i}  + \left(p,r\right)_{I_i} +  \left(\Delta_{\frac{(\alpha-2)}{2}}q,q\right)_{I_i} \\&\quad \nonumber - \left(\Delta_{\frac{(\alpha-2)}{2}}q,p\right)_{I_i}   - \left(\Delta_{\frac{(\alpha-2)}{2}}q,r\right)_{I_i} 
             + \left(q,u\right)_{I_i}  + \left(q,r\right)_{I_i} 
         + \left(r,u_x\right)_{I_i} +\left(r,r_x\right)_{I_i}  \\&\quad +\left(r,r\right)_{I_i} -\left(r,p\right)_{I_i}
      +\left(u,r_x\right)_{I_i} - \left(u,p_x\right)_{I_i}\Big] 
        + \mathcal{IF}(u,p,r;u,u+r,r-p).
    \end{align}
Applying the integration by parts, we have $$(p,u_x)_{I_i} + (u,p_x)_{I_i} = (up)|_{x_{i-\frac{1}{2}}^+}^{x_{i+\frac{1}{2}}^-} \quad \text{and} \quad(r,u_x)_{I_i} + (u,r_x)_{I_i} = (ur)|_{x_{i-\frac{1}{2}}^+}^{x_{i+\frac{1}{2}}^-},$$ 
and substituting these identities in equation \eqref{perturbation3}, the compact form $\mathcal{B}$ reduces to 
    \begin{align}\label{perturbation4}
       \nonumber \mathcal{B}(u,p,&q,r;u,-q+p+r,u+r,r-p)\\ \nonumber=& 
         \sum\limits_{i=1}^N\Big[\left(u_t,u\right)_{I_i} - \left(f(u), u_x\right)_{I_i} -\left(p,q\right)_{I_i} + \left(p,p\right)_{I_i} +  \left(\Delta_{\frac{(\alpha-2)}{2}}q,q\right)_{I_i}- \left(\Delta_{\frac{(\alpha-2)}{2}}q,p\right)_{I_i}  \\&\quad \nonumber- \left(\Delta_{\frac{(\alpha-2)}{2}}q,r\right)_{I_i} + \left(q,u\right)_{I_i}+ \left(q,r\right)_{I_i} 
         +\left(r,r_x\right)_{I_i} +\left(r,r\right)_{I_i}
           \Big] \\&\quad \underbrace{-\sum\limits_{i=1}^N (up)|_{x_{i-\frac{1}{2}}^+}^{x_{i+\frac{1}{2}}^-} + \sum\limits_{i=1}^N (ur)|_{x_{i-\frac{1}{2}}^+}^{x_{i+\frac{1}{2}}^-}
             + \mathcal{IF}(u,p,r;u,u+r,r-p)}_{\mathcal{E}_1}.
    \end{align}
In order to simplify $\mathcal{E}_1$, we observe that
\begin{equation*}
    \sum\limits_{i=1}^N (up)|_{x_{i-\frac{1}{2}}^+}^{x_{i+\frac{1}{2}}^-} = -u^+_{\frac{1}{2}}p^+_{\frac{1}{2}} + u^-_{N+\frac{1}{2}} p^-_{N+\frac{1}{2}} - \sum\limits_{i=1}^{N-1}u^-_{i+\frac{1}{2}} \llbracket p \rrbracket_{i+\frac{1}{2}} -\sum\limits_{i=1}^{N-1}p^+_{i+\frac{1}{2}} \llbracket u \rrbracket_{i+\frac{1}{2}},
\end{equation*}
and in a similar manner,
\begin{equation*}
    \sum\limits_{i=1}^N (ur)|_{x_{i-\frac{1}{2}}^+}^{x_{i+\frac{1}{2}}^-} = -u^+_{\frac{1}{2}}r^+_{\frac{1}{2}}+u^-_{N+\frac{1}{2}}r^-_{N+\frac{1}{2}}- \sum\limits_{i=1}^{N-1}u^-_{i+\frac{1}{2}} \llbracket r \rrbracket_{i+\frac{1}{2}} -\sum\limits_{i=1}^{N-1}r^+_{i+\frac{1}{2}} \llbracket u \rrbracket_{i+\frac{1}{2}}.
\end{equation*}
As a result, the term $\mathcal{E}_1$ becomes
    \begin{equation*}
        \begin{split}
            \mathcal{E}_1&=-\sum\limits_{i=1}^N (up)|_{x_{i-\frac{1}{2}}^+}^{x_{i+\frac{1}{2}}^-}+ \sum\limits_{i=1}^N (ur)|_{x_{i-\frac{1}{2}}^+}^{x_{i+\frac{1}{2}}^-}
             + \mathcal{IF}(u,p,r;u,u+r,r-p) \\&=u^+_{\frac{1}{2}}p^+_{\frac{1}{2}} - u^-_{N+\frac{1}{2}} p^-_{N+\frac{1}{2}} + \sum\limits_{i=1}^{N-1}u^-_{i+\frac{1}{2}} \llbracket p \rrbracket_{i+\frac{1}{2}} +\sum\limits_{i=1}^{N-1}p^+_{i+\frac{1}{2}} \llbracket u \rrbracket_{i+\frac{1}{2}} -u^+_{\frac{1}{2}}r^+_{\frac{1}{2}} + u^-_{N+\frac{1}{2}}r^-_{N+\frac{1}{2}}\\&\qquad 
             - \sum\limits_{i=1}^{N-1}u^-_{i+\frac{1}{2}} \llbracket r \rrbracket_{i+\frac{1}{2}} -\sum\limits_{i=1}^{N-1}r^+_{i+\frac{1}{2}} \llbracket u \rrbracket_{i+\frac{1}{2}} -\left(\hat f_{\frac{1}{2}}+ p^+_{\frac{1}{2}}\right)u^+_{\frac{1}{2}} + \left(\hat f_{N+\frac{1}{2}}+p^-_{N+\frac{1}{2}}\right)u^-_{N+\frac{1}{2}} \\&\qquad 
              - \sum\limits_{i=1}^{N-1} (\hat f_{i+\frac{1}{2}}+p^+_{i+\frac{1}{2}})\llbracket u \rrbracket_{i+\frac{1}{2}}+ r^+_{\frac{1}{2}}(u+r)^+_{\frac{1}{2}} - r^-_{N+\frac{1}{2}}(u+r)^-_{N+\frac{1}{2}} + \sum\limits_{i=1}^{N-1}r^+_{i+\frac{1}{2}} \llbracket u+r \rrbracket_{i+\frac{1}{2}} 
            \\ &\qquad + \sum\limits_{i=1}^{N-1}u^-_{i+\frac{1}{2}} \llbracket r-p \rrbracket_{i+\frac{1}{2}}\\
            &= -\hat f_{\frac{1}{2}}u^+_{\frac{1}{2}} + \hat f_{N+\frac{1}{2}}u^-_{N+\frac{1}{2}}  -\sum\limits_{i=1}^{N-1} \hat f_{i+\frac{1}{2}}\llbracket u \rrbracket_{i+\frac{1}{2}}+\sum\limits_{i=1}^{N-1}r^+_{i+\frac{1}{2}}  \llbracket r \rrbracket_{i+\frac{1}{2}} + (r^+_{\frac{1}{2}})^2 -(r^-_{N+\frac{1}{2}})^2 \\&
            = -\hat f_{\frac{1}{2}}u^+_{\frac{1}{2}} + \hat f_{N+\frac{1}{2}}u^-_{N+\frac{1}{2}}  -\sum\limits_{i=1}^{N-1} \hat f_{i+\frac{1}{2}}\llbracket u \rrbracket_{i+\frac{1}{2}}+\sum\limits_{i=1}^{N-1}\big((r^+_{i+\frac{1}{2}})^2 -  r^+_{i+\frac{1}{2}}r^-_{i+\frac{1}{2}}\big) + (r^+_{\frac{1}{2}})^2-(r^-_{N+\frac{1}{2}})^2.
        \end{split}
    \end{equation*}
Furthermore, we observe that
    \begin{equation*}
        \sum\limits_{i=1}^N \left(r,r_x\right)_{I_i} = \frac{1}{2}\sum\limits_{i=1}^N r^2 \big|_{x^+_{i-\frac{1}{2}}}^{x^-_{i+\frac{1}{2}}} = -\frac{1}{2}(r^+_{\frac{1}{2}})^2+\frac{1}{2}(r^-_{N+\frac{1}{2}})^2 +\frac{1}{2} \sum\limits_{i=1}^{N-1}\big((r^-_{i+\frac{1}{2}})^2 -  (r^+_{i+\frac{1}{2}})^2\big).
    \end{equation*}
As a consequence, using $r^-_{N+\frac{1}{2}}=0$, we get
     \begin{align}\label{E1+rr_x}
        \nonumber\mathcal{E}_1+\sum\limits_{i=1}^N \left(r,r_x\right)_{I_i} &= -\hat f_{\frac{1}{2}}u^+_{\frac{1}{2}} + \hat f_{N+\frac{1}{2}}u^-_{N+\frac{1}{2}}  -\sum\limits_{i=1}^{N-1} \hat f_{i+\frac{1}{2}}\llbracket u \rrbracket_{i+\frac{1}{2}} +\frac{1}{2}(r^+_{\frac{1}{2}})^2 -\frac{1}{2}(r^-_{N+\frac{1}{2}})^2 \\&\qquad \nonumber+\sum\limits_{i=1}^{N-1}\left(\frac{1}{2}(r^-_{i+\frac{1}{2}})^2 +  \frac{1}{2}(r^+_{i+\frac{1}{2}})^2 -  r^+_{i+\frac{1}{2}}r^-_{i+\frac{1}{2}}\right)\\&
        = -\hat f_{\frac{1}{2}}u^+_{\frac{1}{2}} + \hat f_{N+\frac{1}{2}}u^-_{N+\frac{1}{2}}  -\sum\limits_{i=1}^{N-1} \hat f_{i+\frac{1}{2}}\llbracket u \rrbracket_{i+\frac{1}{2}} +\frac{1}{2}(r^+_{\frac{1}{2}})^2 + \frac{1}{2}\sum\limits_{i=1}^{N-1} \llbracket r \rrbracket_{i+\frac{1}{2}}^2.
    \end{align}
Let us define $F(u) = \int^u f(u)\,du$. Hence we get
    \begin{equation}\label{F_u}
        \sum\limits_{i=1}^{N} \left(f(u), u_x\right)_{I_i} = \sum\limits_{i=1}^{N} F(u)|_{u^+_{i-\frac{1}{2}}}^{u^-_{i+\frac{1}{2}}} = - \sum\limits_{i=1}^{N-1} \llbracket F(u)\rrbracket_{i+\frac{1}{2}} - F(u)_{\frac{1}{2}} + F(u)_{N+\frac{1}{2}}.
    \end{equation}
Taking into account \eqref{E1+rr_x} and \eqref{F_u} in \eqref{perturbation4}, we end up with
\begin{equation}\label{B_nlinear}
        \begin{split}
         \mathcal{B}(u,p, & q,r; u,-q+p+r,u+r,r-p) \\
         =& \sum\limits_{i=1}^N\Big[\left(u_t,u\right)_{I_i}-\left(p,q\right)_{I_i} + \left(p,p\right)_{I_i} +  \left(\Delta_{\frac{(\alpha-2)}{2}}q,q\right)_{I_i}   -\left(\Delta_{\frac{(\alpha-2)}{2}}q,p\right)_{I_i}- \left(\Delta_{\frac{(\alpha-2)}{2}}q,r\right)_{I_i}  
         \\ & \quad + \left(q,u\right)_{I_i} + \left(q,r\right)_{I_i} 
       +\left(r,r\right)_{I_i}
           \Big]+  F(u)_{\frac{1}{2}}- F(u)_{N+\frac{1}{2}} + \sum\limits_{i=1}^{N-1} \llbracket F(u)\rrbracket_{i+\frac{1}{2}} 
           \\& \quad -\hat f_{\frac{1}{2}}u^+_{\frac{1}{2}}
            + \hat f_{N+\frac{1}{2}}u^-_{N+\frac{1}{2}} -  \sum\limits_{i=1}^{N-1}\hat f_{i+\frac{1}{2}} \llbracket u\rrbracket_{i+\frac{1}{2}}  +
           \frac{1}{2}\sum\limits_{i=1}^{N-1} \llbracket r \rrbracket_{i+\frac{1}{2}}^2 +\frac{1}{2}(r^+_{\frac{1}{2}})^2.
        \end{split}
    \end{equation}
    Clearly, if $(u_h, p_h, q_h, r_h)$ is a solution of scheme \eqref{LDGscheme1}-\eqref{Boundaryfluxes2}, then $$\mathcal{B}(u_h, p_h, q_h, r_h; v, w, z, s)=0 \text{ for any }(v, w, z, s) \in V_h^k.$$ By using the Young's inequality in \eqref{B_nlinear} and Proposition \ref{fracemb}, we have
    \begin{equation}\label{perturbation5}
        \begin{split}
         \left((u_h)_t,u_h\right)_{L^2(\Omega)}&  +\sum\limits_{i=1}^{N-1} \llbracket F(u_h)\rrbracket_{i+\frac{1}{2}} + F(u_h)_{\frac{1}{2}} - F(u_h)_{N+\frac{1}{2}}+ \norm{p_h}_{L^2(\Omega)}^2 + \left(\Delta_{\frac{(\alpha-2)}{2}}q_h,q_h\right)_{I_i}\\&+
         \norm{r_h}^2_{L^2(\Omega)} 
           - (\hat f_h)_{\frac{1}{2}}(u_h)^+_{\frac{1}{2}} + (\hat f_h)_{N+\frac{1}{2}}(u_h)^-_{N+\frac{1}{2}} - \sum\limits_{i=1}^{N-1}  (\hat f_h)_{i+\frac{1}{2}}\llbracket u_h \rrbracket_{i+\frac{1}{2}}\\&+\frac{1}{2}((r_h)^+_{\frac{1}{2}})^2  + \frac{1}{2}\sum\limits_{i=1}^{N-1} \llbracket r_h \rrbracket_{i+\frac{1}{2}}^2\\ \leq & \varepsilon \norm{p_h}_{L^2(\Omega)}^2 +c_1(\varepsilon)\norm{q_h}_{L^2(\Omega)}^2 +\varepsilon\norm{r_h}_{L^2(\Omega)}^2 +c_2(\varepsilon) \norm{u_h}_{L^2(\Omega)}^2,
        \end{split}
    \end{equation}
where we have used the Lemma \ref{kilbas}, and $c_i$, $i=1,2$ are constants and $\varepsilon>0$.
Given that $\hat f(u_h^-,u_h^+)$ is a monotone flux, exhibiting non-decreasing behavior in its first argument and non-increasing behavior in its second argument, we have $$\llbracket F(u_h)\rrbracket_{i+\frac{1}{2}} -  (\hat f_h)_{i+\frac{1}{2}} \llbracket u_h\rrbracket_{i+\frac{1}{2}} > 0 \text{ for } i=1,2,\ldots,N-1.$$ With the help of Lemma \ref{Inner_frac}, Lemma \ref{fpfred} and Proposition \ref{fracemb}, the estimate \eqref{perturbation5} reduces to
  \begin{equation*}
        \begin{split}
        \left((u_h)_t,u_h\right)_{L^2(\Omega)} + F(u_h)_{\frac{1}{2}} - F(u_h)_{N+\frac{1}{2}}
       -(\hat f_h)_{\frac{1}{2}}u^+_{\frac{1}{2}} +  (\hat f_h)_{N+\frac{1}{2}}(u_h)^-_{N+\frac{1}{2}}\leq  C\norm{u_h}_{L^2(\Omega)}^2,
        \end{split}
    \end{equation*}
where we have omitted the positive terms on the left-hand side. Imposing the boundary conditions, we obtain
 \begin{equation}\label{perturbation6}
        \begin{split}
         \frac{1}{2}\frac{d}{dt}\norm{u_h}^2_{L^2(\Omega)}\leq  C\norm{u_h}_{L^2(\Omega)}^2.
        \end{split}
    \end{equation}
Using the Gronwall’s inequality, \eqref{perturbation6} yields the estimate
 \begin{equation*}
    \norm{u_h(\cdot,T)}_{L^2(\Omega)} \leq e^{-2CT}\norm{u^0_h}_{L^2(\Omega)},
    \end{equation*}
    where the constant $C$ is independent of $u_h$.
Hence the $L^2$-stability is established.
    \end{proof}

\begin{remark}
Following the Remark \ref{remk1}, we can extend our stability analysis for an alternate choice of Neumann boundary condition consider in \eqref{Boundaryfluxes2}. More precisely, let us consider the boundary data $U_x(a,t) = U_x(b,t)$. We observe that setting $\hat{r}_{\frac{1}{2}} = r^+_{\frac{1}{2}}$ and  $\hat{r}_{N+\frac{1}{2}} = r^-_{N+\frac{1}{2}}$ implies $r^+_{\frac{1}{2}} = r^-_{N+\frac{1}{2}}$, resulting in $\frac{1}{2}(r^+_{\frac{1}{2}})^2 - \frac{1}{2}(r^-_{N+\frac{1}{2}})^2 = 0$. From \eqref{E1+rr_x}, the stability estimate follows. Alternatively, if we consider the non-homogeneous boundary data $U_x(b,t) = \theta(t)$, we can choose flux $\hat{r}_{N+\frac{1}{2}} = \theta(t)$, leading to $\frac{1}{2}(r^-_{N+\frac{1}{2}})^2 = \frac{1}{2}\theta(t)^2$ and subsequently, the stability estimate can be derived from \eqref{E1+rr_x}.
These alternative choices of flux functions enrich our understanding of the stability under various boundary conditions.
\end{remark}

\subsection{Error analysis}
We would like to derive an estimate for the $L^2$-error of the approximate solution $u_h$ by using the aforementioned stability result. This process unfolds in two steps: initially focusing on the linear convection term $f(u) = \lambda u$, $\lambda\geq0$ (see \cite{yan2002local}), and then  utilizing the linear case estimate to establish the error estimate for the non-linear case \cite{xu2007error}. In particular, we can take $\lambda\leq 0$ with the modification in the numerical flux by setting $\hat p_h = p_h^-$ and $\hat u_h = u^+_h$. It is worthwhile to mention that error analysis for \eqref{fkdv} differs from \cite{yan2002local, xu2007error} in choosing the test functions and estimating the fractional term.

We begin by defining the projection operators into $V_h^k$ (see \cite{xu2014discontinuous}). For any sufficiently smooth function $g$, we define
\begin{align}\label{projectionprop}
    \int_{I_i}(\mathcal{P}^\pm g(x) - g(x))y(x)\,dx &= 0 \quad \forall ~y\in P^{k-1}(I_i), \quad i=1,2,\cdots,N, ~\text{ and } (\mathcal{P}^\pm g)^\pm_{i\mp\frac{1}{2}} = g(x^\pm_{i\mp\frac{1}{2}}), \nonumber\\
     \int_{I_i}(\mathcal{P} g(x) - g(x))y(x)\,dx &= 0 \quad \forall~y\in P^k(I_i), \quad i=1,2,\cdots,N,
\end{align}
where $\mathcal{P}^\pm$ are special projection (Gauss-Radau projection) operators and $\mathcal{P}$ is the standard $L^2$ projection. Let $U$ be an exact solution of \eqref{fkdv} and $u_h$ be an approximate solution obtained by the LDG scheme \eqref{LDGscheme1}-\eqref{Boundaryfluxes2}. For the writing convenience, we denote some notations corresponding to differences between projections and certain approximations or exact solutions as follows
    $$\mathcal{P}^-_hu = \mathcal{P}^-U-u_h ,\quad \mathcal{P}^+_hp = \mathcal{P}^+P-p_h,\quad \mathcal{P}_hq = \mathcal{P}Q- q_h, \quad \mathcal{P}_h^+r = \mathcal{P}^+R-r_h,$$
    and 
     $$\mathcal{P}^-_eU = \mathcal{P}^-U-U ,\quad \mathcal{P}^+_eP = \mathcal{P}^+P-P,\quad \mathcal{P}_eQ = \mathcal{P}Q-Q , \quad \mathcal{P}_e^+R = \mathcal{P}^+R-R,$$
     where subscripts $h$ and $e$ indicate that the difference with approximate and exact solution respectively.
From the $L^2$-projections and Gauss-Radau projection defined above, it is easy to show the following interpolation estimate \cite{Ciarlet}
\begin{equation}\label{interpest}
    \norm{\mathcal Q_eg}_{L^2(\Omega)} + h\norm{\mathcal{Q}_e g}_{L^{\infty}(\Omega)} + h^{\frac{1}{2}}\norm{\mathcal{Q}_eg}_{\Gamma_h}\leq Ch^{k+1},
\end{equation}
     where $\mathcal{Q}_e = \mathcal{P}_e$ or $\mathcal{P}^\pm_e$ and constant $C$ only depends on $g$.
     
If $f(u)=\lambda u$, then $\hat f_h = \hat f(u^-_h,u^+_h) = \frac{\lambda}{2}(u^-_h+u^+_h)-\frac{|\lambda|}{2}(u^+_h-u^-_h)$. This is the Lax-Friedrichs flux with $|f'(u)|=|\lambda|$ and the scheme \eqref{LDGscheme1} reduces to 
\begin{equation}\label{LDGscheme2}
\begin{split}
     \left((u_h)_t,v\right)_{I_i} & = \left(\lambda u_h + p_h,v_x\right)_{I_i} - \left(\hat f_h v + \hat p_h v\right)|_{x_{i-\frac{1}{2}}^+}^{x_{i+\frac{1}{2}}^-},\\
              \left(p_h,w\right)_{I_i} &= \left(\Delta_{\frac{(\alpha-2)}{2}}q_h,w\right)_{I_i},\\
              \left(q_h,z\right)_{I_i}&= -\left(r_h,z_x\right)_{I_i} + \left(\hat r_h z\right)|_{x_{i-\frac{1}{2}}^+}^{x_{i+\frac{1}{2}}^-},\\
              \left(r_h,s\right)_{I_i}&= -\left(u_h,s_x\right)_{I_i} +\left(\hat u_h s\right)|_{x_{i-\frac{1}{2}}^+}^{x_{i+\frac{1}{2}}^-},\\
              \left(u^0_h,v\right)_{I_i} &= \left( \mathcal P^-U_0,v\right)_{I_i},
\end{split}
\end{equation}
and the associated compact form can be represented as
\begin{align}\label{F_linear}
           \nonumber \mathcal{B}_\lambda(u,p,q,r;& v,w,z,s) = \sum\limits_{i=1}^N\Big[\left(u_t,v\right)_{I_i} - \left(\lambda u + p,v_x\right)_{I_i} +\left(p,w\right)_{I_i} -  \left(\Delta_{\frac{(\alpha-2)}{2}}q,w\right)_{I_i}
         + \left(q,z\right)_{I_i} \\&+ \left(r,z_x\right)_{I_i}  
          +\left(r,s\right)_{I_i} +\left(u,s_x\right)_{I_i} 
       + \left((\hat f + \hat p )v\right)|_{x_{i-\frac{1}{2}}^+}^{x_{i+\frac{1}{2}}^-} -\left(\hat r z\right)|_{x_{i-\frac{1}{2}}^+}^{x_{i+\frac{1}{2}}^-} -
        \left(\hat u s\right)|_{x_{i-\frac{1}{2}}^+}^{x_{i+\frac{1}{2}}^-}\Big],
    \end{align}
 where numerical fluxes $\hat u$, $\hat p$, $\hat r$ are defined by \eqref{fluxes} and boundary fluxes by \eqref{Boundaryfluxes2}. 
Let us consider equations \eqref{B_nlinear} with $f(u)=\lambda u, \lambda\geq 0$ and $\hat f(u^-,u^+) = \frac{\lambda}{2}(u^-+u^+)-\frac{|\lambda|}{2}(u^+-u^-) = \lambda u^-$. As a consequence, the compact form transforms to 
\begin{align}\label{B_linear}
        \nonumber \mathcal{B}_{\lambda}(u,p,&q,r;u,-q+p+r,u+r,r-p)\\& \nonumber = \sum\limits_{i=1}^N\Big[\left(u_t,u\right)_{I_i}-\left(p,q\right)_{I_i} + \left(p,p\right)_{I_i} +  \left(\Delta_{\frac{(\alpha-2)}{2}}q,q\right)_{I_i}- \left(\Delta_{\frac{(\alpha-2)}{2}}q,p\right)_{I_i} \\&\nonumber - \left(\Delta_{\frac{(\alpha-2)}{2}}q,r\right)_{I_i}  + \left(q,u\right)_{I_i} + \left(q,r\right)_{I_i}
       +\left(r,r\right)_{I_i}
           \Big]+  F(u)_{\frac{1}{2}}- F(u)_{N+\frac{1}{2}} + \sum\limits_{i=1}^{N-1} \llbracket F(u)\rrbracket_{i+\frac{1}{2}} \\&  -\hat f_{\frac{1}{2}}u^+_{\frac{1}{2}} + \hat f_{N+\frac{1}{2}}u^-_{N+\frac{1}{2}} -  \sum\limits_{i=1}^{N-1}\hat f_{i+\frac{1}{2}} \llbracket u\rrbracket_{i+\frac{1}{2}} +
           \frac{1}{2}\sum\limits_{i=1}^{N-1} \llbracket r \rrbracket_{i+\frac{1}{2}}^2 +\frac{1}{2}(r^+_{\frac{1}{2}})^2.
    \end{align}

The following lemma will be instrumental for the subsequent error analysis. 
\begin{lemma}[See Lemma 2.14 in \cite{xu2014discontinuous}]\label{Negfracapprox}
    Let $U\in C_c^\infty (\Omega)$ and $u_h$ be an approximation of $U$ in $V_h^k(\Omega)$. Further assume that $(U-u_h,v)_{I_i} = 0,~i=1,2,\cdots,N$, $\forall v\in V_h^k(\Omega)$. For $0\leq s<1$, the following estimate holds
    \begin{equation}\label{fraclapbound_app}
        \norm{\Delta_{-s/2}U - \Delta_{-s/2}u_h}_{L^2(\Omega)} \leq Ch^{k+1},
    \end{equation}
    where $C$ is a constant independent of $h$.
\end{lemma}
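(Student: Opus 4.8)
The plan is to reduce everything to the $L^2$-boundedness of the fractional integral, which is already available as Lemma~\ref{kilbas}, together with the standard projection estimate \eqref{interpest}. First I would record that the hypothesis $(U-u_h,v)_{I_i}=0$ for every $v\in V_h^k$ and every $i$ says precisely that $u_h$ is the elementwise $L^2$-projection of $U$ onto $V_h^k$. In particular $e:=U-u_h$ is $L^2$-orthogonal to $P^k(I_i)$ on each element, and since $U\in C_c^\infty(\Omega)$, the interpolation estimate \eqref{interpest} (with $\mathcal{Q}_e=\mathcal{P}_e$) gives $\norm{e}_{L^2(\Omega)}\le Ch^{k+1}$. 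Next I would use the linearity of the left and right Riemann--Liouville integrals (Lemma~\ref{fracint_P}(i)), which by \eqref{NegFracL} transfers to $\Delta_{-s/2}$, so that $\Delta_{-s/2}U-\Delta_{-s/2}u_h=\Delta_{-s/2}e$.

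Then the conclusion is immediate: applying the operator bound $\norm{\Delta_{-s/2}w}_{L^2(\Omega)}\le C\norm{w}_{L^2(\Omega)}$ from Lemma~\ref{kilbas} with $w=e$ and combining with the projection estimate yields
\[
\norm{\Delta_{-s/2}U-\Delta_{-s/2}u_h}_{L^2(\Omega)}=\norm{\Delta_{-s/2}e}_{L^2(\Omega)}\le C\norm{e}_{L^2(\Omega)}\le Ch^{k+1},
\]
which is exactly \eqref{fraclapbound_app}. In this route the orthogonality enters only through the $L^2$-bound on $e$, so with Lemma~\ref{kilbas} at hand there is essentially no obstacle.

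The genuine difficulty is therefore hidden inside that boundedness result, and if one wanted a self-contained argument (the route of \cite{xu2014discontinuous}) one would instead estimate ${}_aI_x^s e(x)$ pointwise by splitting the integral $\frac{1}{\Gamma(s)}\int_a^x(x-t)^{s-1}e(t)\,dt$ over the elements lying to the left of $x$. On each element $I_j$ not containing $x$ the kernel $t\mapsto(x-t)^{s-1}$ is smooth, so one subtracts its degree-$k$ Taylor polynomial in $t$ and uses the elementwise orthogonality of $e$ against $P^k(I_j)$ to annihilate the polynomial part, leaving only the Taylor remainder, which is of size $O(h^{k+1})$ times a kernel derivative. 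The main obstacle in this approach is the near-diagonal contribution: the $(k+1)$-st derivative of the kernel behaves like $(x-t)^{s-1-(k+1)}$, which is non-integrable at $t=x$, so the element containing (or adjacent to) $x$ cannot be handled by Taylor expansion and must instead be controlled by the weak-singularity estimate underlying Lemma~\ref{kilbas}; one must also verify that the sum of the far-element remainders converges uniformly in $x$, which it does precisely because $s<1$ keeps $\int(x-t)^{s-1}\,dt$ finite. The right integral ${}_xI_b^s e$ is treated symmetrically, and \eqref{NegFracL} assembles the two contributions into the bound for $\Delta_{-s/2}e$.
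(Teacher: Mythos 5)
Your argument is correct and complete: the orthogonality hypothesis identifies $u_h$ as the elementwise $L^2$-projection of $U$, so $\|U-u_h\|_{L^2(\Omega)}\leq Ch^{k+1}$ by \eqref{interpest}, linearity gives $\Delta_{-s/2}U-\Delta_{-s/2}u_h=\Delta_{-s/2}(U-u_h)$, and the $L^2$-boundedness in Lemma~\ref{kilbas} yields \eqref{fraclapbound_app} — this is precisely the standard argument behind Lemma 2.14 of \cite{xu2014discontinuous}, which the paper cites rather than reproves, with the singular-kernel analysis already encapsulated in Lemma~\ref{kilbas}. Your second paragraph's Taylor-expansion sketch is an unnecessary (and only partially worked-out) alternative, but it does not affect the validity of your main proof.
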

\begin{theorem}\label{Lemmalin_err}
    Let $U$ be a sufficiently smooth exact solution of equation \eqref{fkdv} with $f(u)=\lambda u$, $\lambda\geq0$. Let $u_h$ be an approximate solution obtained by the LDG scheme \eqref{F_linear}. Then the following error estimate holds
    \begin{equation}\label{errest1}
        \norm{U-u_h}_{L^2(\Omega)}\leq C h^{k+1},
    \end{equation}
    provided $h$ is sufficiently small, and $C$ is a constant independent of $h$.
\end{theorem}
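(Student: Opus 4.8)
The plan is to follow the standard energy/projection framework for LDG error estimates, leaning on the stability computation of Lemma \ref{stablemma} applied to the linearized bilinear form $\mathcal{B}_\lambda$ of \eqref{B_linear}. First I would decompose the error through the projections of \eqref{projectionprop}, writing $U-u_h = \mathcal{P}^-_h u - \mathcal{P}^-_e U$ and likewise $P-p_h = \mathcal{P}^+_h p - \mathcal{P}^+_e P$, $Q-q_h = \mathcal{P}_h q - \mathcal{P}_e Q$, $R-r_h = \mathcal{P}^+_h r - \mathcal{P}^+_e R$. The interpolation estimate \eqref{interpest} bounds the projection-error pieces $\mathcal{P}^-_e U$, $\mathcal{P}^+_e P$, $\mathcal{P}_e Q$, $\mathcal{P}^+_e R$ by $Ch^{k+1}$, so the task reduces to controlling the discrete pieces. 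The initial-data equation in \eqref{LDGscheme2} forces $u_h^0 = \mathcal{P}^-U_0$, hence $\mathcal{P}^-_h u(\cdot,0)=0$, which seeds the final Gronwall step.

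Next I would establish Galerkin orthogonality. Because $\mathcal{B}_\lambda$ is bilinear in the linear-flux regime and the exact solution is continuous across interfaces, so that the numerical fluxes reproduce its single-valued traces, consistency gives $\mathcal{B}_\lambda(U,P,Q,R;v,w,z,s)=0$, while the scheme gives $\mathcal{B}_\lambda(u_h,p_h,q_h,r_h;v,w,z,s)=0$. Subtracting and splitting through the projections yields the error equation
$$\mathcal{B}_\lambda(\mathcal{P}^-_h u,\mathcal{P}^+_h p,\mathcal{P}_h q,\mathcal{P}^+_h r;v,w,z,s) = \mathcal{B}_\lambda(\mathcal{P}^-_e U,\mathcal{P}^+_e P,\mathcal{P}_e Q,\mathcal{P}^+_e R;v,w,z,s).$$
I would then insert the same energy-generating test functions as in the stability proof, namely $(v,w,z,s)=(\mathcal{P}^-_h u,\,-\mathcal{P}_h q+\mathcal{P}^+_h p+\mathcal{P}^+_h r,\,\mathcal{P}^-_h u+\mathcal{P}^+_h r,\,\mathcal{P}^+_h r-\mathcal{P}^+_h p)$. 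By the computation leading to \eqref{B_linear}, the left-hand side collapses to $\tfrac12\tfrac{d}{dt}\norm{\mathcal{P}^-_h u}_{L^2(\Omega)}^2$ plus the manifestly nonnegative contributions $\norm{\mathcal{P}^+_h p}_{L^2(\Omega)}^2$, $\norm{\mathcal{P}^+_h r}_{L^2(\Omega)}^2$, the fractional term $(\Delta_{\frac{\alpha-2}{2}}\mathcal{P}_h q,\mathcal{P}_h q)$, and jump/boundary terms, exactly as in Lemma \ref{stablemma}; here the monotone-flux inequality used there is replaced by the explicit nonnegativity $\llbracket F(\mathcal{P}^-_h u)\rrbracket_{i+\frac12}-\hat f_{i+\frac12}\llbracket \mathcal{P}^-_h u\rrbracket_{i+\frac12}=\tfrac{|\lambda|}{2}\llbracket \mathcal{P}^-_h u\rrbracket_{i+\frac12}^2\geq 0$.

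The crux is bounding the right-hand side. Using the defining orthogonality \eqref{projectionprop}, the volume terms pairing $\mathcal{P}^-_e U$, $\mathcal{P}^+_e P$, $\mathcal{P}^+_e R$ against derivatives of test functions vanish, and the associated interface contributions vanish as well because the Gauss-Radau projections $\mathcal{P}^\pm$ match precisely the one-sided traces selected by the fluxes \eqref{fluxes}; what survives is controlled by \eqref{interpest}, including its $h^{1/2}$ trace bound for the flux terms. The genuinely new difficulty is the fractional coupling: the term $(\Delta_{\frac{\alpha-2}{2}}\mathcal{P}_e Q,w)$ must be estimated, and since $q_h$ is built on the standard $L^2$ projection the hypotheses of Lemma \ref{Negfracapprox} apply to $Q$ and $\mathcal{P}Q$, giving $\norm{\Delta_{\frac{\alpha-2}{2}}\mathcal{P}_e Q}_{L^2(\Omega)}\leq Ch^{k+1}$, while the boundedness in Lemma \ref{kilbas} handles the remaining off-diagonal fractional products after Young's inequality.

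The remaining obstacle, inherited from the stability argument, is that no time derivative is available for the auxiliary-variable errors, so the $\norm{\mathcal{P}_h q}_{L^2(\Omega)}^2$ produced by Young's inequality must be absorbed: this is done as in Lemma \ref{stablemma} by combining Lemma \ref{Inner_frac} with the fractional Poincar\'e--Friedrichs inequality (Lemma \ref{fpfred}) to bound $\norm{\mathcal{P}_h q}_{L^2(\Omega)}^2\leq C(\Delta_{\frac{\alpha-2}{2}}\mathcal{P}_h q,\mathcal{P}_h q)$, whereas the $\norm{\mathcal{P}^+_h p}^2$ and $\norm{\mathcal{P}^+_h r}^2$ contributions are absorbed by their positive counterparts on the left. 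After choosing $\varepsilon$ small, Young's inequality and \eqref{interpest} reduce everything to
$$\frac{d}{dt}\norm{\mathcal{P}^-_h u}_{L^2(\Omega)}^2 \leq C\norm{\mathcal{P}^-_h u}_{L^2(\Omega)}^2 + Ch^{2k+2}.$$
Gronwall's inequality together with $\mathcal{P}^-_h u(\cdot,0)=0$ then gives $\norm{\mathcal{P}^-_h u(\cdot,T)}_{L^2(\Omega)}\leq Ch^{k+1}$, and the triangle inequality with \eqref{interpest} yields \eqref{errest1}. I expect the fractional coupling terms and the absorption of the auxiliary-variable norms, rather than the routine polynomial projection estimates, to be the main technical obstacle.
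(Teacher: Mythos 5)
Your proposal is correct and follows essentially the same route as the paper's proof: the same projection splitting $U-u_h=\mathcal{P}^-_hu-\mathcal{P}^-_eU$, the same Galerkin-orthogonality error equation, the same energy test functions $(\mathcal{P}^-_hu,\,-\mathcal{P}_hq+\mathcal{P}^+_hp+\mathcal{P}^+_hr,\,\mathcal{P}^-_hu+\mathcal{P}^+_hr,\,\mathcal{P}^+_hr-\mathcal{P}^+_hp)$, the same use of the Gauss--Radau/$L^2$ projection orthogonalities and Lemma \ref{Negfracapprox} for the fractional coupling, and the same absorption via Lemma \ref{Inner_frac} and Lemma \ref{fpfred} followed by Gronwall with $\mathcal{P}^-_hu(\cdot,0)=0$. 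Your explicit identity $\llbracket F(\mathcal{P}^-_hu)\rrbracket-\hat f\llbracket\mathcal{P}^-_hu\rrbracket=\tfrac{|\lambda|}{2}\llbracket\mathcal{P}^-_hu\rrbracket^2$ is a minor sharpening of the paper's appeal to monotonicity, but the argument is otherwise the paper's own.
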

\begin{proof}
 We define 
    \begin{align*}
    P = \Delta_{\frac{(\alpha-2)}{2}}Q, \qquad Q = R_x , \qquad R= U_x.
    \end{align*}
Then $U,$ $P,$ $Q$ and $R$ satisfy  \eqref{F_linear}, i.e.
    \begin{equation*}
        \mathcal{B}_\lambda(U,P,Q,R;v,w,z,s)=0, \quad \text{ for all } v,w,z,s\in V_h^k.
    \end{equation*}
Since $f(u)=\lambda u$, the equation \eqref{fkdv} becomes linear and $\mathcal{B}_\lambda$ transforms into a bilinear operator. Assume that $(u_h,p_h,q_h,r_h) \in V_h^k$ satisfies \eqref{LDGscheme2}. Then we have
$$\mathcal{B}_{\lambda}(u_h,p_h,q_h,r_h;v,w,z,s)=0, \quad \text{ for all } (v,w,z,s)\in V_h^k.$$ 
Hence we formulate the error equation
    \begin{equation*}
        \mathcal{B}_\lambda(U-u_h,P-p_h,Q-q_h,R-r_h;v,w,z,s)=0 \quad \text{ for all } (v,w,z,s)\in V_h^k.
    \end{equation*}
The error can be represented as $U-u_h =\mathcal{P}^-_hu -  \mathcal{P}^-_eU$. Consequently, for all $(v,w,z,s)\in V_h^k$, we have the following identity
    \begin{equation}\label{errorequation}
    \begin{split}
        \mathcal{B}_\lambda(\mathcal{P}^-_hu,\mathcal{P}^+_hp,\mathcal{P}_hq ,\mathcal{P}_h^+r;v,w,z,s)=\mathcal{B}_\lambda(\mathcal{P}^-_eU,\mathcal{P}^+_eP,\mathcal{P}_eQ ,\mathcal{P}_e^+R;v,w,z,s).
    \end{split}
    \end{equation}
We choose $(v,w,z,s) = (\mathcal{P}^-_hu, -\mathcal{P}_hq + \mathcal{P}^+_hp+\mathcal{P}_h^+r, \mathcal{P}^-_hu+ \mathcal{P}_h^+r, \mathcal{P}_h^+r - \mathcal{P}^+_hp) $ in equation \eqref{errorequation}. Taking into account the bilinear form \eqref{B_linear}, the left hand side of the identity \eqref{errorequation} can be represented as 
    \begin{align}\label{bilinear1}
            \nonumber \mathcal{B}_\lambda(\mathcal{P}^-_hu&,\mathcal{P}^+_hp,\mathcal{P}_hq ,\mathcal{P}_h^+r;\mathcal{P}^-_hu, -\mathcal{P}_hq + \mathcal{P}^+_hp + \mathcal{P}_h^+r, \mathcal{P}^-_hu+ \mathcal{P}_h^+r, \mathcal{P}_h^+r - \mathcal{P}^+_hp)\\ \nonumber
              =& \sum\limits_{i=1}^N\Big[\left((\mathcal{P}^-_hu)_t,\mathcal{P}^-_hu\right)_{I_i}-\left(\mathcal{P}^+_hp,\mathcal{P}_hq\right)_{I_i} + \left(\mathcal{P}^+_hp,\mathcal{P}^+_hp\right)_{I_i} +  \left(\Delta_{\frac{(\alpha-2)}{2}}\mathcal{P}_hq,\mathcal{P}_hq\right)_{I_i}  \\&\quad  \nonumber- \left(\Delta_{\frac{(\alpha-2)}{2}}\mathcal{P}_hq,\mathcal{P}^+_hp\right)_{I_i}-\left(\Delta_{\frac{(\alpha-2)}{2}}\mathcal{P}_hq,\mathcal{P}^+_hr\right)_{I_i} + \left(\mathcal{P}_hq,\mathcal{P}^-_hu\right)_{I_i}+ \left(\mathcal{P}_hq,\mathcal{P}_h^+r\right)_{I_i}\\&\quad \nonumber+\left(\mathcal{P}_h^+r,\mathcal{P}_h^+r\right)_{I_i}
           \Big] + \sum\limits_{i=1}^{N-1} \Big(\llbracket F(\mathcal{P}^-_hu)\rrbracket_{i+\frac{1}{2}} - \hat f_{i+\frac{1}{2}} \llbracket \mathcal{P}^-_hu\rrbracket_{i+\frac{1}{2}}\Big)  + F(\mathcal{P}^-_hu)_{\frac{1}{2}}- F(\mathcal{P}^-_hu)_{N+\frac{1}{2}} \\& \quad  -\hat f_{\frac{1}{2}}(\mathcal{P}^-_hu)^+_{\frac{1}{2}}  + \hat f_{N+\frac{1}{2}}(\mathcal{P}^-_hu)^-_{N+\frac{1}{2}} + \frac{1}{2}\sum\limits_{i=1}^{N-1} \llbracket \mathcal{P}_h^+r \rrbracket_{i+\frac{1}{2}}^2 +\frac{1}{2}((\mathcal{P}_h^+r)^+_{\frac{1}{2}})^2,
    \end{align}
and from equation \eqref{F_linear}, the right hand side of the identity \eqref{errorequation} can be written as
\begin{align}\label{bilinear2}
       \nonumber \mathcal{B}_\lambda(\mathcal{P}^-_eU,&\mathcal{P}^+_eP,\mathcal{P}_eQ ,\mathcal{P}_e^+R;\mathcal{P}^-_hu, -\mathcal{P}_hq + \mathcal{P}^+_hp + \mathcal{P}_h^+r, \mathcal{P}^-_hu+ \mathcal{P}_h^+r, \mathcal{P}_h^+r - \mathcal{P}^+_hp) \\& \nonumber
        =\sum\limits_{i=1}^N\Big[\left((\mathcal{P}^-_eU)_t,\mathcal{P}^-_hu\right)_{I_i} - \left(\lambda\mathcal{P}^-_eU + \mathcal{P}^+_eP,(\mathcal{P}^-_hu)_x\right)_{I_i}  +\left(\mathcal{P}^+_eP, -\mathcal{P}_hq + \mathcal{P}^+_hp + \mathcal{P}_h^+r\right)_{I_i} \\&\quad \nonumber -  \left(\Delta_{\frac{(\alpha-2)}{2}}(\mathcal{P}_eQ ), -\mathcal{P}_hq + \mathcal{P}^+_hp + \mathcal{P}_h^+r\right)_{I_i}
         + \left(\mathcal{P}_eQ ,\mathcal{P}^-_hu+ \mathcal{P}_h^+r\right)_{I_i}\\&\quad \nonumber + \left(\mathcal{P}_e^+R,(\mathcal{P}^-_hu+ \mathcal{P}_h^+r)_x\right)_{I_i}
          +\left(\mathcal{P}_e^+R,\mathcal{P}_h^+r - \mathcal{P}^+_hp\right)_{I_i}  +\left(\mathcal{P}^-_eU,(\mathcal{P}_h^+r - \mathcal{P}^+_hp)_x\right)_{I_i} \Big]\\&\quad
      + \mathcal{IF}(\mathcal{P}^-_eU,\mathcal{P}^+_eP,\mathcal{P}_e^+R;\mathcal{P}^-_hu,\mathcal{P}^-_hu+ \mathcal{P}_h^+r,\mathcal{P}_h^+r - \mathcal{P}^+_hp),
\end{align}
    where numerical flux at interfaces $\mathcal{IF}$ is given by \eqref{Interfacesflux} with $\hat f_h =\hat f(u^-_h,u^+_h) = \frac{\lambda}{2}(u^-_h+u^+_h)-\frac{|\lambda|}{2}(u^+_h-u^-_h)$. We observe that
      \begin{align*}
          (\mathcal{P}^-_hu)_x \in P^{k-1}(I_i), &\quad \mathcal{P}^-_hu+ \mathcal{P}_h^+r \in P^k(I_i),\\  (\mathcal{P}^-_hu+ \mathcal{P}_h^+r)_x \in P^{k-1}(I_i), &\quad
          (\mathcal{P}_h^+r - \mathcal{P}^+_hp)_x \in P^{k-1}(I_i),
      \end{align*}
for all $i=1,2,\cdots,N$.  Since $U$, $P$, $Q$ and $R$ are sufficiently smooth, applying the projection operators introduced in \eqref{projectionprop}, we have 
      \begin{align*}
           \left(\lambda\mathcal{P}^-_eU + \mathcal{P}^+_eP, (\mathcal{P}^-_hu)_x\right)_{I_i} = 0,  &\quad \left(\mathcal{P}_eQ ,\mathcal{P}^-_hu+ \mathcal{P}_h^+r\right)_{I_i} = 0, \\\quad \left(\mathcal{P}_e^+R,(\mathcal{P}^-_hu+ \mathcal{P}_h^+r)_x\right)_{I_i}=0, & \quad \left(\mathcal{P}^-_eU,(\mathcal{P}_h^+r - \mathcal{P}^+_hp)_x\right)_{I_i} =0,
      \end{align*}
      and at the interfaces
      \begin{align*}
         (\lambda\mathcal{P}^-_eU)^-_{i+\frac{1}{2}} = 0, \qquad (\mathcal{P}^+_eP)^+_{i-\frac{1}{2}} =0, \qquad (\mathcal{P}_e^+R)^+_{i-\frac{1}{2}} =0,
      \end{align*}
     and the approximation theory on the point values \cite[Section 3.2]{Ciarlet} yields
       \begin{align*}
         (\lambda\mathcal{P}^-_eU)^-_{i-\frac{1}{2}} \leq Ch_{i}^{k+1}, \qquad (\mathcal{P}^+_eP)^+_{i+\frac{1}{2}} \leq Ch_{i}^{k+1}, \qquad (\mathcal{P}_e^+R)^+_{i+\frac{1}{2}} \leq Ch_{i}^{k+1},
      \end{align*}
With the help of the above estimate, the bilinear form \eqref{bilinear2} can be estimated as
     \begin{align}\label{Bilinear3}
 \mathcal{B}_\lambda(\mathcal{P}^-_eU&,\mathcal{P}^+_eP,\mathcal{P}_eQ ,\mathcal{P}_e^+R;\mathcal{P}^-_hu, -\mathcal{P}_hq + \mathcal{P}^+_hp + \mathcal{P}_h^+r, \mathcal{P}^-_hu+ \mathcal{P}_h^+r, \mathcal{P}_h^+r - \mathcal{P}^+_hp) \nonumber\\
             & \leq \sum\limits_{i=1}^N\Big[\left((\mathcal{P}^-_eU)_t,\mathcal{P}^-_hu\right)_{I_i}+\left(\Delta_{\frac{(\alpha-2)}{2}}(\mathcal{P}_eQ ) - (\mathcal{P}^+_eP), \mathcal{P}_hq - \mathcal{P}^+_hp - \mathcal{P}_h^+r\right)_{I_i} \nonumber\\ &\qquad  \qquad+\left(\mathcal{P}_e^+R,\mathcal{P}_h^+r - \mathcal{P}^+_hp\right)_{I_i} \Big] +C({\Omega})h^{2k+2}.
\end{align}
Using the Young's inequality and also the triangle inequality, Lemma \ref{Negfracapprox} and approximation theorem in \cite[Section 3.2]{Ciarlet} in equation \eqref{Bilinear3}, we get
    \begin{align}\label{Bilinear4}
           \nonumber  \mathcal{B}_\lambda(\mathcal{P}^-_eU&,\mathcal{P}^+_eP,\mathcal{P}_eQ ,\mathcal{P}_e^+R;\mathcal{P}^-_hu,-\mathcal{P}_hq + \mathcal{P}^+_hp + \mathcal{P}_h^+r, \mathcal{P}^-_hu+ \mathcal{P}_h^+r, \mathcal{P}_h^+r - \mathcal{P}^+_hp)\\ \nonumber
             & \leq \left((\mathcal{P}^-_eU)_t,\mathcal{P}^-_hu\right)_{L^2(\Omega)} + c_1(\epsilon)h^{2k+2} + \epsilon\left(\norm{\mathcal{P}_hq}^2_{L^2(\Omega)}+\norm{\mathcal{P}^+_hp}^2_{L^2(\Omega)}+\norm{\mathcal{P}^+_hr}^2_{L^2(\Omega)}\right)\\ &\quad + c_2(\epsilon)h^{2k+2} +\epsilon\left(\norm{\mathcal{P}^+_hr}^2_{L^2(\Omega)}+\norm{\mathcal{P}^+_hp}^2_{L^2(\Omega)}\right) + C({\Omega})h^{2k+2}.
    \end{align}
 where $c_1$ and $c_2$ are constants and $\epsilon>0$. Equation \eqref{errorequation} implies
\begin{align}\label{Bilinear45}
           \nonumber \mathcal{B}_\lambda(\mathcal{P}^-_hu&,\mathcal{P}^+_hp,\mathcal{P}_hq ,\mathcal{P}_h^+r;\mathcal{P}^-_hu, -\mathcal{P}_hq + \mathcal{P}^+_hp + \mathcal{P}_h^+r, \mathcal{P}^-_hu+ \mathcal{P}_h^+r, \mathcal{P}_h^+r - \mathcal{P}^+_hp)\\ \nonumber
             & \leq \left((\mathcal{P}^-_eU)_t,\mathcal{P}^-_hu\right)_{L^2(\Omega)} + c_1(\epsilon)h^{2k+2} + \epsilon\left(\norm{\mathcal{P}_hq}^2_{L^2(\Omega)}+\norm{\mathcal{P}^+_hp}^2_{L^2(\Omega)}+\norm{\mathcal{P}^+_hr}^2_{L^2(\Omega)}\right)\\ &\quad + c_2(\epsilon)h^{2k+2} +\epsilon\left(\norm{\mathcal{P}^+_hr}^2_{L^2(\Omega)}+\norm{\mathcal{P}^+_hp}^2_{L^2(\Omega)}\right) + C({\Omega})h^{2k+2}.
    \end{align}
Incorporating \eqref{bilinear1} in \eqref{Bilinear45}, and omitting the positive terms on the left-hand side, yields
    \begin{align}\label{bilinear5}
            \nonumber&\left((\mathcal{P}^-_hu)_t,\mathcal{P}^-_hu\right)_{L^2(\Omega)}+\norm{\mathcal{P}^+_hr}^2_{L^2(\Omega)}+\norm{\mathcal{P}^+_hp}^2_{L^2(\Omega)} +  \left(\Delta_{\frac{(\alpha-2)}{2}}\mathcal{P}_hq,\mathcal{P}_hq\right)_{L^2(\Omega)} \\ \nonumber
            &\quad\leq \left((\mathcal{P}^-_eU)_t,\mathcal{P}^-_hu\right)_{L^2(\Omega)} + C({\Omega})h^{2k+2} +\epsilon\norm{\mathcal{P}^+_hp}^2_{L^2(\Omega)}+c_1(\epsilon)\norm{\mathcal{P}_hq}^2_{L^2(\Omega)}\\&\qquad+\epsilon\norm{\mathcal{P}^+_hr}^2_{L^2(\Omega)} + c_2(\epsilon) \norm{\mathcal{P}^-_hu}^2_{L^2(\Omega)},
    \end{align}
where we have again used the Young's inequality, Lemma \ref{Inner_frac} and  Lemma \ref{fpfred}. Furthermore, we have 
     \begin{equation*}
        \begin{split}
            \frac{1}{2}\frac{d}{dt}\norm{\mathcal{P}^-_hu}^2_{L^2(\Omega)}\leq \left((\mathcal{P}^-_eU)_t,\mathcal{P}^-_hu\right)_{L^2(\Omega)} + C({\Omega})h^{2k+2} + C \norm{\mathcal{P}^-_hu}^2_{L^2(\Omega)}.
        \end{split}
    \end{equation*}
   Since $\norm{\mathcal{P}^-_hu(\cdot,0)}^2_{L^2(\Omega)}=0$, using the standard approximation theory associated to the projection operator \cite[Section 3.2]{Ciarlet} and Gronwall’s inequality, we have
    \begin{equation*}
        \norm{U-u_h}_{L^2(\Omega)}\leq C h^{k+1}.
    \end{equation*}
    This completes the proof.
\end{proof}

We extend the previous error estimate to accommodate a more general non-linear convection term. In the process of formulating the error estimate for the nonlinear fractional KdV equation \eqref{fkdv}, we introduce few lemmas concerning the measure between physical flux $f$ and numerical flux $\hat f_h$. Subsequently, we present our main result.

\begin{lemma}[See Lemma 3.1 in \cite{zhang2004error}]\label{Shulemma1}
    Let $\xi\in L^2(\Omega)$ be any piecewise smooth function. On each interface of elements and the boundary point, we define
    \begin{align*}
        \beta(\hat f;\xi):= \beta(\hat f;\xi^-,\xi^+) = \begin{cases}
            \llbracket\xi\rrbracket^{-1}(f(\{\!\!\{\xi\}\!\!\}) - \hat f(\xi)),\qquad &if~\llbracket \xi \rrbracket \neq 0,\\
            \frac{1}{2}|f'(\{\!\!\{\xi\}\!\!\})|, \qquad &if~\llbracket \xi \rrbracket = 0,
        \end{cases}
    \end{align*}
    where $\hat f(\xi) = \hat f(\xi^-,\xi^+)$ is a monotone numerical flux consistent with the given flux $f$. Then $\beta(\hat f;\xi)$ is bounded and nonnegative for any $(\xi^-,\xi^+)\in\R$. Moreover, we have
    \begin{align*}
        \frac{1}{2}|f'(\{\!\!\{\xi\}\!\!\})| &\leq \beta(\hat f;\xi) + C_{\ast}|\llbracket \xi \rrbracket|,\\
        -\frac{1}{8}f''(\{\!\!\{\xi\}\!\!\})|\llbracket \xi \rrbracket &\leq \beta(\hat f;\xi) + C_{\ast}|\llbracket \xi \rrbracket|^2.
    \end{align*}
\end{lemma}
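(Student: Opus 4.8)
The plan is to read off everything from the two defining features of a monotone numerical flux --- consistency $\hat f(u,u)=f(u)$ and monotonicity $\partial_1\hat f\ge 0\ge\partial_2\hat f$ --- together with the smoothness of $f$ (hence of $\hat f$); this reconstructs the argument of \cite{zhang2004error}. The workhorse is an integral representation of $\beta$. For $\llbracket\xi\rrbracket\neq 0$, write $m=\{\!\!\{\xi\}\!\!\}$ and interpolate linearly between the diagonal point $(m,m)$ and $(\xi^-,\xi^+)$ via $\eta^\pm(t)=m\pm\tfrac{t}{2}\llbracket\xi\rrbracket$. Consistency gives $f(m)=\hat f(m,m)$, and the fundamental theorem of calculus along this segment yields
\begin{equation*}
  \beta(\hat f;\xi)=\frac{\hat f(m,m)-\hat f(\xi^-,\xi^+)}{\llbracket\xi\rrbracket}
  =\frac12\int_0^1\big(\partial_1\hat f-\partial_2\hat f\big)\big(\eta^-(t),\eta^+(t)\big)\,dt .
\end{equation*}
Nonnegativity and boundedness are then immediate: monotonicity makes the integrand nonnegative, while the Lipschitz continuity of $\hat f$ bounds it above; the case $\llbracket\xi\rrbracket=0$ is nonnegative and bounded by $\tfrac12\max|f'|$ by definition.

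For the first estimate I would differentiate the consistency relation once to obtain $\partial_1\hat f(m,m)+\partial_2\hat f(m,m)=f'(m)$. Since the two summands have opposite signs, the elementary inequality $|a+b|\le a-b$ valid for $a\ge0\ge b$ gives $\tfrac12|f'(m)|\le\tfrac12\big(\partial_1\hat f-\partial_2\hat f\big)(m,m)$. Comparing this frozen value at $(m,m)$ with the integral above and using the Lipschitz continuity of $\partial_1\hat f,\partial_2\hat f$ (so that each differs from its value at $(m,m)$ by at most $Ct\,|\llbracket\xi\rrbracket|$ along the path) produces exactly the admissible error $C_{\ast}|\llbracket\xi\rrbracket|$.

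The second estimate is the delicate one and requires a first-order Taylor expansion of $\beta$ in the jump. Differentiating consistency twice gives $(\partial_{11}\hat f+2\partial_{12}\hat f+\partial_{22}\hat f)(m,m)=f''(m)$, and expanding $\hat f\big(m-\tfrac{\sigma}{2},m+\tfrac{\sigma}{2}\big)$ in $\sigma=\llbracket\xi\rrbracket$ reduces the claim to
\begin{equation*}
  \beta(\hat f;\xi)+\tfrac18 f''(m)\,\llbracket\xi\rrbracket
  =\tfrac12\big(\partial_1\hat f-\partial_2\hat f\big)(m,m)
  +\tfrac12\,\partial_{12}\hat f(m,m)\,\llbracket\xi\rrbracket+\mathcal{O}\big(\llbracket\xi\rrbracket^2\big)\ \ge\ -C_{\ast}\,|\llbracket\xi\rrbracket|^2 .
\end{equation*}
The leading term is nonnegative; when it is strictly positive, the linear term is controlled for small jumps and the quadratic remainder for large ones. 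The main obstacle is the degenerate case $(\partial_1\hat f-\partial_2\hat f)(m,m)=0$, which forces $\partial_1\hat f(m,m)=\partial_2\hat f(m,m)=0$ and hence makes $(m,m)$ a global minimum of $\partial_1\hat f$ and a global maximum of $\partial_2\hat f$; first-order optimality then forces $\partial_{12}\hat f(m,m)=0$, annihilating the linear term and leaving only the $\mathcal{O}(\llbracket\xi\rrbracket^2)$ remainder, which is absorbed into $C_{\ast}|\llbracket\xi\rrbracket|^2$. This settles the estimate in every case.
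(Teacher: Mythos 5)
The paper does not prove this lemma at all: it is quoted directly from Lemma 3.1 of Zhang and Shu \cite{zhang2004error}, so your reconstruction can only be compared against that original argument, which it broadly parallels (Taylor expansion of $\hat f$ about the diagonal point $(\{\!\!\{\xi\}\!\!\},\{\!\!\{\xi\}\!\!\})$, consistency differentiated once and twice, monotonicity supplying the signs). Your integral representation of $\beta$, the nonnegativity and boundedness, and the first inequality are all correct, granting — as any argument involving $f''$ must — that $\hat f$ is sufficiently smooth; this holds for the Lax--Friedrichs flux \eqref{LF} used in this paper, though not for every monotone flux.

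The gap is in the second inequality, and it is a question of uniformity of $C_{\ast}$. After your expansion the claim reduces to $A(m)+B(m)\,\llbracket\xi\rrbracket \ge -C_{\ast}\llbracket\xi\rrbracket^2$, where $A(m)=\tfrac12\big(\partial_1\hat f-\partial_2\hat f\big)(m,m)\ge 0$ and $B(m)=\tfrac12\partial_{12}\hat f(m,m)$. Your dichotomy ``either $A>0$ or $A=0$'' does not close this: a quadratic $C_{\ast}\sigma^2+B\sigma+A$ is nonnegative for all $\sigma$ only if $B^2\le 4C_{\ast}A$, so the constant your nondegenerate case produces is of size $B^2/A$, which blows up as $A\to 0$ with $B\neq 0$; and your first-order-optimality argument applies only when $A$ is exactly zero, not when it is merely small. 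Since $C_{\ast}$ must be a single constant valid at every interface and every time (it is summed over all of them in Lemma \ref{Shulemma2}), this is a real hole. The missing ingredient is precisely the uniform estimate $B(m)^2\le C\,A(m)$. It follows either from the classical inequality $|g'|^2\le 2\norm{g''}_{L^\infty}\,g$ for nonnegative $C^{1,1}$ functions, applied to $g=\partial_1\hat f\ge 0$ and $g=-\partial_2\hat f\ge 0$ restricted to lines through $(m,m)$, or more simply by writing $\partial_1\hat f(m,m)+\llbracket\xi\rrbracket\,\partial_{12}\hat f(m,m)=\partial_1\hat f\big(m,m+\llbracket\xi\rrbracket\big)+\mathcal{O}\big(\llbracket\xi\rrbracket^2\big)\ge -C\llbracket\xi\rrbracket^2$, invoking monotonicity at the shifted point rather than at $(m,m)$. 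With either of these additions your argument closes; without one of them the case analysis does not yield a uniform $C_{\ast}$.
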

For nonlinear flux $f(u)$, we define
\begin{align}
    \nonumber\sum\limits_{i=1}^N \mathcal{F}_i(f;U,u_h;v) &:= \sum\limits_{i=1}^N \int_{I_i} \big(f(U) - f(u_h)\big)v_x\,dx + \sum\limits_{i=1}^N \Big(\big(f(U) - f(\{\!\!\{u_h\}\!\!\})\big)\llbracket v\rrbracket\Big)_{i+\frac{1}{2}}\\
   \label{Nonlinearpart} &\quad  + \sum\limits_{i=1}^N \big((f(\{\!\!\{u_h\}\!\!\}) - \hat f)\llbracket v\rrbracket\big)_{i+\frac{1}{2}}.
\end{align}
\begin{lemma}[See Corollary 3.6 in \cite{xu2007error}]\label{Shulemma2}
    Let the operator $\mathcal{F}_i$ be defined by \eqref{Nonlinearpart}. Then the following estimate holds:
    \begin{align*}
        \nonumber\sum\limits_{i=1}^N \mathcal{F}_i(f;U,u_h;v) &\leq -\frac{1}{4}\beta(\hat f;u_h)\sum\limits_{i=1}^N\llbracket v\rrbracket^2_{i+\frac{1}{2}} + \big(C+C_{\ast}(\norm{v}_{L^{\infty}(\Omega} + h^{-1}\norm{U-u_h}_{L^\infty(\Omega)}^2)\big)\norm{v}_{L^2(\Omega)}^2\\
        &\quad + (C+C_{\ast}h^{-1}\norm{U-u_h}_{L^\infty(\Omega)}^2)h^{2k+1}.
    \end{align*}
\end{lemma}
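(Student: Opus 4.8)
The plan is to treat the three constituents of $\mathcal{F}_i$ in \eqref{Nonlinearpart} separately, extracting the negative jump term $-\tfrac14\beta(\hat f;u_h)\sum\llbracket v\rrbracket^2$ from the numerical-flux piece and dominating every remaining contribution by the two advertised error terms. Throughout I write $e=U-u_h$ for the error. Since the exact solution $U$ is continuous across each interface, $\llbracket U\rrbracket=0$, so at $x_{i+\frac12}$ one has the identities $U-\{\!\!\{u_h\}\!\!\}=\{\!\!\{e\}\!\!\}$ and $\llbracket u_h\rrbracket=-\llbracket e\rrbracket$, which are what let the flux jumps be rephrased in terms of the error. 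I would also record at the outset that $\llbracket e\rrbracket=\llbracket v\rrbracket+O(h^{k+1})$ at each interface, because in the application $v$ differs from $e$ only by a projection error whose interface jumps are controlled by \eqref{interpest}.

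First I would handle the numerical-flux term $\sum_i\big((f(\{\!\!\{u_h\}\!\!\})-\hat f)\llbracket v\rrbracket\big)_{i+\frac12}$. By the very definition of $\beta$ in Lemma \ref{Shulemma1}, $f(\{\!\!\{u_h\}\!\!\})-\hat f=\beta(\hat f;u_h)\llbracket u_h\rrbracket=-\beta(\hat f;u_h)\llbracket e\rrbracket$, so this term equals $-\sum_i\beta(\hat f;u_h)\llbracket e\rrbracket\llbracket v\rrbracket$. Substituting $\llbracket e\rrbracket=\llbracket v\rrbracket+O(h^{k+1})$ and splitting the cross term with Young's inequality produces $-\sum_i\beta(\hat f;u_h)\llbracket v\rrbracket^2$ plus a remainder controlled by $\varepsilon\sum_i\beta(\hat f;u_h)\llbracket v\rrbracket^2+Ch^{2k+2}$. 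Taking $\varepsilon$ small leaves a negative jump term with coefficient close to $-1$; I retain $-\tfrac14$ of it to match the statement and hold the remaining surplus in reserve to absorb the jump leftovers produced by the other two pieces.

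Next I would Taylor expand the interface term $\big((f(U)-f(\{\!\!\{u_h\}\!\!\}))\llbracket v\rrbracket\big)_{i+\frac12}$ about $\{\!\!\{u_h\}\!\!\}$, using $U-\{\!\!\{u_h\}\!\!\}=\{\!\!\{e\}\!\!\}$: the linear part is $f'(\{\!\!\{u_h\}\!\!\})\{\!\!\{e\}\!\!\}\llbracket v\rrbracket$ and the quadratic remainder is of order $f''\{\!\!\{e\}\!\!\}^2\llbracket v\rrbracket$. For the linear part I insert $\llbracket e\rrbracket=\llbracket v\rrbracket+O(h^{k+1})$ and invoke the first inequality of Lemma \ref{Shulemma1}, $\tfrac12|f'(\{\!\!\{u_h\}\!\!\})|\le\beta(\hat f;u_h)+C_{\ast}|\llbracket u_h\rrbracket|$, which converts the $f'$ factor into the reserved $\beta\llbracket v\rrbracket^2$ plus a $C_{\ast}|\llbracket v\rrbracket|^3$ contribution; the cubic jump is then tamed through an inverse inequality such as $|\llbracket v\rrbracket|\lesssim h^{-1/2}\norm{v}_{L^\infty(\Omega)}$ or $|\llbracket e\rrbracket|\lesssim h^{-1/2}\norm{U-u_h}_{L^\infty(\Omega)}$, which is exactly the mechanism generating the factors $\norm{v}_{L^\infty(\Omega)}$ and $h^{-1}\norm{U-u_h}_{L^\infty(\Omega)}^2$ in the statement. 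The second inequality of Lemma \ref{Shulemma1} is used in the same way to match the $f''$ quadratic remainder against $\beta\llbracket v\rrbracket^2$ up to a $C_{\ast}|\llbracket v\rrbracket|^2$ error.

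Finally I would dispose of the volume term $\sum_i\int_{I_i}(f(U)-f(u_h))v_x\,dx$, which is the delicate one because a naive bound costs an inverse power of $h$ on $v_x$. The device is to expand $f(U)-f(u_h)=f'(U)e+O(e^2)$ and split $e=v-\eta$ with $\eta$ the projection error. The genuinely discrete part $\int f'(U)\,v\,v_x=\tfrac12\int f'(U)(v^2)_x$ is integrated by parts, yielding only $\norm{v}_{L^2(\Omega)}^2$ (from $f''U_x$) and interface jumps that merge with the previous steps; the projection part $\int f'(U)\,\eta\,v_x$ is made $O(h^{k+1})\norm{v}_{L^2(\Omega)}$ by subtracting the elementwise mean of $f'(U)$, so that the orthogonality \eqref{projectionprop} of $\eta$ against $P^{k-1}$ applies and gains one power of $h$; the quadratic remainder, handled with an inverse inequality, is what leaves behind the $h^{-1}\norm{U-u_h}_{L^\infty(\Omega)}^2\big(\norm{v}_{L^2(\Omega)}^2+h^{2k+1}\big)$ contributions. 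Collecting all pieces with Young's inequality and the interpolation estimate \eqref{interpest} gives the claim. \textbf{The main obstacle} is the third step: extracting precisely the $-\tfrac14$ coefficient while using both inequalities of Lemma \ref{Shulemma1} to trade the $f'$ and $f''$ jump contributions for the reserved negative term, since the cubic jump terms $|\llbracket v\rrbracket|^3$ must be absorbed by inverse estimates in a way that accounts exactly for the $h^{-1}\norm{U-u_h}_{L^\infty(\Omega)}^2$ weights rather than an uncontrolled inverse power of $h$.
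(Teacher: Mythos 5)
First, a point of comparison: the paper does not prove this lemma at all — it is quoted from Corollary 3.6 of Xu--Shu \cite{xu2007error}, whose proof follows the strategy of Zhang--Shu \cite{zhang2004error} — so your attempt has to be measured against that argument. Your toolkit (Taylor expansion, the splitting $U-u_h=v-\eta$ with $\eta$ the Gauss--Radau projection error, integration by parts of the volume term, orthogonality of $\eta$ to $P^{k-1}$, Lemma \ref{Shulemma1}, inverse inequalities) is the right one, but two of your steps contain genuine gaps. The first is in your treatment of the interface term $\big(f(U)-f(\{\!\!\{u_h\}\!\!\})\big)\llbracket v\rrbracket$: its linear part is $f'(\{\!\!\{u_h\}\!\!\})\{\!\!\{e\}\!\!\}\llbracket v\rrbracket$, and the factor $\{\!\!\{e\}\!\!\}=\{\!\!\{v\}\!\!\}-\{\!\!\{\eta\}\!\!\}$ is an \emph{average}, of size $\norm{v}_{L^\infty(\Omega)}$, not a jump; your device of ``inserting $\llbracket e\rrbracket=\llbracket v\rrbracket+O(h^{k+1})$'' does not apply to it, so the term is genuinely linear in $\llbracket v\rrbracket$ rather than cubic, and estimating it interface by interface only gives $\sum_i|f'||\{\!\!\{v\}\!\!\}||\llbracket v\rrbracket|\le Ch^{-1}\norm{v}_{L^2(\Omega)}^2$, which is not an admissible term (and cannot be absorbed by $-\beta\sum\llbracket v\rrbracket^2$ since $\beta$ may vanish). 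This term is controlled only through its exact cancellation against the boundary contributions $-f'(U)\{\!\!\{v\}\!\!\}\llbracket v\rrbracket$ created when $\int f'(U)v\,v_x=\tfrac12\int f'(U)(v^2)_x$ is integrated by parts; after that cancellation only remainders carrying the small factors $\{\!\!\{\eta\}\!\!\}$ or $f''\{\!\!\{e\}\!\!\}$ survive, and those are what Lemma \ref{Shulemma1} and Young's inequality are applied to. In other words, your steps two and three cannot be decoupled in the order you present them: the ``merge'' you defer to step three is the heart of the proof, not bookkeeping.

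The second gap is in the volume term. You expand $f(U)-f(u_h)=f'(U)e+O(e^2)$ and bound the quadratic remainder by an inverse inequality, claiming this yields $h^{-1}\norm{U-u_h}_{L^\infty(\Omega)}^2(\cdot)$. It does not: $\int|e|^2|v_x|\le \norm{e}_{L^\infty}\norm{e}_{L^2}\norm{v_x}_{L^2}\le Ch^{-1}\norm{e}_{L^\infty}\norm{v}_{L^2}^2+\dots$ carries only \emph{one} power of $\norm{e}_{L^\infty}$, and the alternative $\int|e|^2|v_x|\le Ch^{-1}\norm{e}_{L^\infty}^2\norm{v}_{L^2}$ cannot be closed into the allowed terms either. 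The distinction is fatal for the application: under \eqref{prioriass} and Corollary \ref{infcoro} one has $\norm{U-u_h}_{L^\infty(\Omega)}\le Ch^{1/2}$, so $h^{-1}\norm{U-u_h}_{L^\infty(\Omega)}\sim h^{-1/2}$ blows up while $h^{-1}\norm{U-u_h}_{L^\infty(\Omega)}^2$ stays bounded, and the Gronwall step in Theorem \ref{NLerror} would not close. The cited proof expands to \emph{second} order: the pure piece $-\tfrac12\int f''(U)v^2v_x=-\tfrac16\int f''(U)(v^3)_x$ is integrated by parts — this is precisely where the $\norm{v}_{L^\infty(\Omega)}\norm{v}_{L^2(\Omega)}^2$ contribution and the $\llbracket v^3\rrbracket$ interface terms originate — and only the \emph{cubic} remainder $O(|e|^3)$ is estimated by brute force, which naturally produces $h^{-1}\norm{U-u_h}_{L^\infty(\Omega)}^2\big(\norm{v}_{L^2(\Omega)}^2+h^{2k+1}\big)$. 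A minor further point: in your first step the interface remainder sums to $Ch^{2k+1}$ (about $h^{-1}$ interfaces, each $O(h^{2k+2})$), not $Ch^{2k+2}$, though this does not affect the conclusion.
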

Similar to \cite{xu2007error}, we deal with the nonlinear flux $f(u)$ by making an \emph{a priori} assumption. Let $h$ be small enough and $k\geq 1$ such that 
\begin{equation}\label{prioriass}
    \norm{U-u_h}_{L^2(\Omega)} \leq h.
\end{equation}
The above assumption is unnecessary for linear flux $f(u) = \lambda u$. 
The rationality of the \textit{a priori} assumption \eqref{prioriass} can be justified as follows. For $k \geq 1$, we can choose $h$ to be small enough such that $Ch^{k + \frac{1}{2}} < \frac{1}{2} h$, where $C$ is a constant. 
Define 
\[
\bar{t} = \sup \left\{ t : \norm{U(t) - u_h(t)}_{L^2(\Omega)} \leq h \right\}.
\]
By continuity, we then have $\norm{U(\bar{t}) - u_h(\bar{t})}_{L^2(\Omega)} = h$ if $\bar{t}$ is finite.
On the other hand, we show in Theorem \ref{NLerror} that
\[
\norm{U - u_h}_{L^2(\Omega)} \leq Ch^{k + \frac{1}{2}} < \frac{1}{2} h \quad \text{for all } t \leq T.
\]
This results in a contradiction to Theorem \ref{NLerror} if $\bar{t} < T$. Therefore, $\bar{t} \geq T$, and our \textit{a priori} assumption \eqref{prioriass} is justified.
\begin{corollary}\label{infcoro}
    Suppose the interpolation estimate \eqref{interpest} holds. Then, under the\textit{ a priori }assumption \eqref{prioriass}, we have
    \begin{equation}\label{Linfty_est}
        \norm{U-u_h}_{L^\infty(\Omega)}\leq Ch^{\frac{1}{2}} \quad \text{and} \quad \norm{\mathcal{Q}_h u}_{L^\infty(\Omega)} \leq Ch^{\frac{1}{2}},
    \end{equation}
    where $\mathcal{Q}_h = \mathcal{P}_h$ or $\mathcal{P}^{\pm}_h$.
\end{corollary}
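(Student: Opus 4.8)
The plan is to reduce both $L^\infty$ bounds to a single inverse-inequality estimate applied to the \emph{discrete} error $\mathcal{Q}_h u = \mathcal{Q}U - u_h$, which, unlike the full error $U-u_h$, lies in the finite element space $V_h^k$ because it is the difference of the projection $\mathcal{Q}U\in V_h^k$ and the approximate solution $u_h\in V_h^k$. For any $v\in V_h^k$ on the mesh $\mathcal{I}$ the standard inverse inequality gives $\norm{v}_{L^\infty(\Omega)}\leq Ch^{-\frac{1}{2}}\norm{v}_{L^2(\Omega)}$, obtained elementwise by scaling to the reference element and using equivalence of norms on $P^k(I_i)$. This is the only ingredient needed beyond the interpolation estimate \eqref{interpest} and the \textit{a priori} assumption \eqref{prioriass}, and the argument is identical for each admissible choice of projection $\mathcal{Q}\in\{\mathcal{P},\mathcal{P}^+,\mathcal{P}^-\}$, so it suffices to treat a generic $\mathcal{Q}$.

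First I would bound $\norm{\mathcal{Q}_h u}_{L^2(\Omega)}$. Using the identity $\mathcal{Q}_h u = (U-u_h)+\mathcal{Q}_e U$, which follows directly from $\mathcal{Q}_h u=\mathcal{Q}U-u_h$ and $\mathcal{Q}_e U=\mathcal{Q}U-U$, together with the triangle inequality, the \textit{a priori} bound \eqref{prioriass}, and the $L^2$ part of \eqref{interpest}, one obtains
\begin{equation*}
\norm{\mathcal{Q}_h u}_{L^2(\Omega)} \leq \norm{U-u_h}_{L^2(\Omega)} + \norm{\mathcal{Q}_e U}_{L^2(\Omega)} \leq h + Ch^{k+1} \leq Ch,
\end{equation*}
since $k\geq 1$ and $h$ is small. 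Feeding this into the inverse inequality yields $\norm{\mathcal{Q}_h u}_{L^\infty(\Omega)}\leq Ch^{-\frac{1}{2}}\,Ch = Ch^{\frac{1}{2}}$, which is the second claimed estimate.

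For the first estimate I would use the same decomposition in reverse, $U-u_h = \mathcal{Q}_h u - \mathcal{Q}_e U$, so that by the triangle inequality
\begin{equation*}
\norm{U-u_h}_{L^\infty(\Omega)} \leq \norm{\mathcal{Q}_h u}_{L^\infty(\Omega)} + \norm{\mathcal{Q}_e U}_{L^\infty(\Omega)} \leq Ch^{\frac{1}{2}} + Ch^{k},
\end{equation*}
where the projection term is controlled by the $L^\infty$ part of \eqref{interpest}, namely $\norm{\mathcal{Q}_e U}_{L^\infty(\Omega)}\leq Ch^{k}$. Since $k\geq 1$ and $h\leq 1$, one has $h^{k}\leq h^{\frac{1}{2}}$, which gives $\norm{U-u_h}_{L^\infty(\Omega)}\leq Ch^{\frac{1}{2}}$ and completes the corollary.

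The step requiring care is the passage from an $L^2$ to an $L^\infty$ bound: one cannot apply the inverse inequality to $U-u_h$ directly, because $U$ is not piecewise polynomial, so routing through the projection $\mathcal{Q}U$ and isolating the finite element piece $\mathcal{Q}_h u$ is essential. The inverse inequality necessarily costs a factor $h^{-\frac{1}{2}}$, and this is precisely the mechanism by which the $L^2$ rate $h$ in \eqref{prioriass} degrades to the $L^\infty$ rate $h^{\frac{1}{2}}$; the hypothesis $k\geq 1$ enters only to guarantee that the genuine interpolation contributions $h^{k+1}$ and $h^{k}$ are dominated by $h$ and $h^{\frac{1}{2}}$ respectively.
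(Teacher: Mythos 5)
Your proof is correct and follows essentially the same route as the paper: the same decomposition $\mathcal{Q}_h u = (U-u_h)+\mathcal{Q}_e U$, the $L^2$ bound via \eqref{prioriass} and \eqref{interpest}, the inverse inequality to pass to $L^\infty$, and the reverse decomposition for $\norm{U-u_h}_{L^\infty(\Omega)}$. In fact your version is slightly cleaner, since you apply the inverse inequality to $\norm{\mathcal{Q}_h u}_{L^2(\Omega)}$ as the logic requires, whereas the paper's displayed intermediate step writes $\norm{\mathcal{Q}_e U}_{L^2(\Omega)}$ there (an apparent typo).
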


\begin{proof}
    Writing $\mathcal{Q}_h u$ as \( \mathcal{Q}_h u = U - u_h + \mathcal{Q}_e U,\) and applying the interpolation estimate \eqref{interpest}, we get
    \begin{align*}
        \norm{\mathcal{Q}_h u}_{L^2(\Omega)} \leq \norm{U - u_h}_{L^2(\Omega)} + \norm{\mathcal{Q}_e U}_{L^2(\Omega)}
        &\leq h + Ch^{k+1} \leq Ch.
    \end{align*}
    Using the inverse inequality $\norm{u_h}_{L^\infty(\Omega)}\leq h^{-\frac{1}{2}}\norm{u_h}_{L^2(\Omega)}$, we obtain
    \begin{equation*}
        \norm{\mathcal{Q}_h u}_{L^\infty(\Omega)} \leq 
        Ch^{-\frac{1}{2}} \norm{\mathcal{Q}_e U}_{L^2(\Omega)} \leq Ch^{\frac{1}{2}}.
    \end{equation*}
    Finally, since $U - u_h = \mathcal{Q}_h u - \mathcal{Q}_e U$, we find
    \begin{align*}
        \norm{U - u_h}_{L^\infty(\Omega)} \leq \norm{\mathcal{Q}_h u}_{L^\infty(\Omega)} + \norm{\mathcal{Q}_e U}_{L^\infty(\Omega)}
        \leq Ch^{\frac{1}{2}} + Ch^{k} \leq Ch^{\frac{1}{2}}.
    \end{align*}
\end{proof}
\begin{theorem}\label{NLerror}
Let $U$ be sufficiently smooth exact solution of \eqref{fkdv} in $\Omega$. Assume that the nonlinear flux $f\in C^3(\Omega)$. Let $u_h$ be an approximate solution obtained by the semi-discrete LDG scheme \eqref{LDGscheme1}-\eqref{Boundaryfluxes2} with auxiliary variables $p_h,q_h,r_h$. Suppose $V_h^k$ is a space of piecewise polynomials of degree $k\geq1$ defined by \eqref{elem_space}. Then for small enough $h$, there holds the following error estimate
\begin{equation}\label{errestimateNL}
    \norm{U-u_h}_{L^2(\Omega)}\leq Ch^{k+\frac{1}{2}},
\end{equation}
where $C$ is a constant depending on time $T>0$, $k$ and the bounds on the derivatives $|f^{(m)}|$, $m=1,2,3$.
\end{theorem}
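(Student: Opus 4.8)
The plan is to mirror the structure of the linear estimate in Theorem~\ref{Lemmalin_err}, while accounting for the genuine nonlinearity of $f$ through the flux-difference operator $\mathcal{F}_i$ of \eqref{Nonlinearpart} and the auxiliary bounds in Lemmas~\ref{Shulemma1} and~\ref{Shulemma2}. First I would record the Galerkin orthogonality: since the exact tuple $(U,P,Q,R)$ and the numerical tuple $(u_h,p_h,q_h,r_h)$ both annihilate the semilinear form, subtracting yields $\mathcal{B}(U,P,Q,R;\cdot)-\mathcal{B}(u_h,p_h,q_h,r_h;\cdot)=0$ for every test tuple in $V_h^k$. Because $\mathcal{B}$ is nonlinear only in its convection slot, every term except the one carrying $f$ is bilinear and splits exactly as in the linear proof once I insert the projection decomposition $U-u_h=\mathcal{P}^-_hu-\mathcal{P}^-_eU$, $P-p_h=\mathcal{P}^+_hp-\mathcal{P}^+_eP$, and so on. I would then take the same energy test functions as before, namely $(v,w,z,s)=(\mathcal{P}^-_hu,\,-\mathcal{P}_hq+\mathcal{P}^+_hp+\mathcal{P}^+_hr,\,\mathcal{P}^-_hu+\mathcal{P}^+_hr,\,\mathcal{P}^+_hr-\mathcal{P}^+_hp)$.

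With this choice the left-hand side collapses exactly as in \eqref{bilinear1}, producing the time-derivative term $((\mathcal{P}^-_hu)_t,\mathcal{P}^-_hu)$, the nonnegative contributions $\norm{\mathcal{P}^+_hp}^2_{L^2(\Omega)}$ and $\norm{\mathcal{P}^+_hr}^2_{L^2(\Omega)}$, the fractional term $(\Delta_{\frac{(\alpha-2)}{2}}\mathcal{P}_hq,\mathcal{P}_hq)$ which is nonnegative by Lemma~\ref{Inner_frac}, together with the interface square terms $\tfrac12\sum\llbracket\mathcal{P}^+_hr\rrbracket^2$ and the boundary square $\tfrac12((\mathcal{P}^+_hr)^+_{\frac12})^2$. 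The only essentially new feature is that the convection contribution no longer reduces to a $\lambda u$ term but assembles into $\sum_i\mathcal{F}_i(f;U,u_h;\mathcal{P}^-_hu)$, i.e. the cell integral of $(f(U)-f(u_h))(\mathcal{P}^-_hu)_x$ plus the two interface flux pieces encoded in \eqref{Nonlinearpart}.

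The crux is to bound this convection contribution. Here I invoke Lemma~\ref{Shulemma2}, which replaces $\sum_i\mathcal{F}_i$ by the favourably-signed jump term $-\tfrac14\beta(\hat f;u_h)\sum\llbracket\mathcal{P}^-_hu\rrbracket^2$ plus terms of the form $(C+C_{\ast}(\norm{\mathcal{P}^-_hu}_{L^\infty(\Omega)}+h^{-1}\norm{U-u_h}_{L^\infty(\Omega)}^2))\norm{\mathcal{P}^-_hu}^2_{L^2(\Omega)}$ and $(C+C_{\ast}h^{-1}\norm{U-u_h}^2_{L^\infty(\Omega)})h^{2k+1}$. The negative jump term is simply discarded, and the whole reduction hinges on controlling the $L^\infty$ factors: this is precisely where the \emph{a priori} assumption \eqref{prioriass} and Corollary~\ref{infcoro} enter, giving $\norm{U-u_h}_{L^\infty(\Omega)}\le Ch^{1/2}$ and $\norm{\mathcal{P}^-_hu}_{L^\infty(\Omega)}\le Ch^{1/2}$, so that $h^{-1}\norm{U-u_h}^2_{L^\infty(\Omega)}\le C$ and all coefficients stay $\mathcal{O}(1)$ while the residual data stays $\mathcal{O}(h^{2k+1})$. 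For the right-hand side produced by the exact-solution projections I proceed exactly as in \eqref{Bilinear3}--\eqref{bilinear5}: the Gauss--Radau conditions \eqref{projectionprop} kill the leading terms, Lemma~\ref{Negfracapprox} handles the fractional factor $\Delta_{\frac{(\alpha-2)}{2}}\mathcal{P}_eQ$, and Young's inequality with the interpolation bound \eqref{interpest} converts the remainder into $\epsilon(\norm{\mathcal{P}^+_hp}^2_{L^2(\Omega)}+\norm{\mathcal{P}_hq}^2_{L^2(\Omega)}+\norm{\mathcal{P}^+_hr}^2_{L^2(\Omega)})+Ch^{2k+2}$, the $\epsilon$-terms being absorbed into the nonnegative left-hand side (using Lemma~\ref{fpfred} to dominate $\norm{\mathcal{P}_hq}$ by its fractional seminorm).

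Collecting everything yields the differential inequality $\tfrac12\frac{d}{dt}\norm{\mathcal{P}^-_hu}^2_{L^2(\Omega)}\le C\norm{\mathcal{P}^-_hu}^2_{L^2(\Omega)}+Ch^{2k+1}$, in which the exponent $2k+1$ (rather than $2k+2$ as in the linear case) is the signature of the nonlinear flux analysis and is exactly what forces the loss of half an order. Since $\norm{\mathcal{P}^-_hu(\cdot,0)}_{L^2(\Omega)}=\mathcal{O}(h^{k+1})$, Gronwall's inequality gives $\norm{\mathcal{P}^-_hu(\cdot,t)}_{L^2(\Omega)}\le Ch^{k+\frac12}$ uniformly on $[0,T]$, and the triangle inequality with \eqref{interpest} upgrades this to $\norm{U-u_h}_{L^2(\Omega)}\le Ch^{k+\frac12}$; since $Ch^{k+\frac12}<\tfrac12 h$ for small $h$ and $k\ge1$, the bootstrap argument preceding the statement closes \eqref{prioriass} and makes the estimate unconditional. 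The principal difficulty is the nonlinear convection term: making the flux jumps cooperate through the monotone-flux structure of $\beta$ while simultaneously controlling the factor $h^{-1}\norm{U-u_h}^2_{L^\infty(\Omega)}$ by the bootstrap $L^\infty$ bound is the delicate balance that both necessitates the \emph{a priori} hypothesis and produces the $h^{k+\frac12}$ rate; coupling this mechanism to the extra auxiliary variable $r$ and the nonlocal term $\Delta_{\frac{(\alpha-2)}{2}}$ demands the careful energy test functions above, but those remaining pieces are routine given Lemmas~\ref{Negfracapprox} and~\ref{Inner_frac}.
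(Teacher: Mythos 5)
Your proposal is correct and follows essentially the same route as the paper: the same splitting of the error equation into the bilinear form $\mathcal{B}_\lambda$ (with $\lambda=0$) plus the nonlinear flux operator $\sum_i\mathcal{F}_i$, the same energy test functions $(\mathcal{P}^-_hu,\,-\mathcal{P}_hq+\mathcal{P}^+_hp+\mathcal{P}^+_hr,\,\mathcal{P}^-_hu+\mathcal{P}^+_hr,\,\mathcal{P}^+_hr-\mathcal{P}^+_hp)$, the same use of Lemmas~\ref{Shulemma1}--\ref{Shulemma2}, Corollary~\ref{infcoro} under the \emph{a priori} assumption \eqref{prioriass}, Lemmas~\ref{Negfracapprox} and~\ref{Inner_frac} with the Gauss--Radau projections, and the same Gronwall--bootstrap conclusion. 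Discarding the negatively-signed jump term $-\tfrac14\beta\sum\llbracket\mathcal{P}^-_hu\rrbracket^2$ on the right is equivalent to the paper's choice of keeping it as a nonnegative term on the left, so there is no substantive difference.
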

\begin{proof}
We define $P,Q$ and $R$ as
    \begin{align*}
    P = \Delta_{\frac{(\alpha-2)}{2}}Q, \qquad Q = R_x , \qquad R= U_x.
    \end{align*}
It is straightforward to observe that for any $(v,w,z,s)\in V_h^k$,
    \begin{align*}
        \mathcal{B}(U,P,Q,R;v,w,z,s) = \mathcal{B}(u_h,p_h,q_h,r_h;v,w,z,s) =0,
    \end{align*}
where the compact form $\mathcal{B}$ is defined in 
\eqref{perturbation}.
Taking $\lambda = 0$ in \eqref{F_linear}, we obtain
    \begin{align*}
         \mathcal{B}(U,P,&Q,R;v,w,z,s) - \mathcal{B}(u_h,p_h,q_h,r_h;v,w,z,s)\\
          &=\mathcal{B}_\lambda(U,P,Q,R;v,w,z,s) - \mathcal{B}_\lambda(u_h,p_h,q_h,r_h;v,w,z,s)- \sum\limits_{i=1}^N \mathcal{F}_i(f;U,u_h;v)\\
          &=\mathcal{B}_\lambda(U-u_h,P-p_h,Q-q_h,R-r_h;v,w,z,s) - \sum\limits_{i=1}^N \mathcal{F}_i(f;U,u_h;v)=0.
    \end{align*}
Choosing the test function $(v,w,z,s) = (\mathcal{P}^-_hu, -\mathcal{P}_hq + \mathcal{P}^+_hp+\mathcal{P}_h^+r, \mathcal{P}^-_hu+ \mathcal{P}_h^+r, \mathcal{P}_h^+r - \mathcal{P}^+_hp)$ and using the fact that $U-u_h =\mathcal{P}^-_hu -  \mathcal{P}^-_eU$, the above identity yields
    \begin{align*}
        \mathcal{B}_\lambda(&\mathcal{P}^-_hu,\mathcal{P}^+_hp,\mathcal{P}_hq ,\mathcal{P}_h^+r;\mathcal{P}^-_hu, -\mathcal{P}_hq + \mathcal{P}^+_hp + \mathcal{P}_h^+r, \mathcal{P}^-_hu+ \mathcal{P}_h^+r, \mathcal{P}_h^+r - \mathcal{P}^+_hp)\\
        & = \mathcal{B}_\lambda(\mathcal{P}^-_eU,\mathcal{P}^+_eP,\mathcal{P}_eQ ,\mathcal{P}_e^+R;\mathcal{P}^-_hu, -\mathcal{P}_hq + \mathcal{P}^+_hp + \mathcal{P}_h^+r, \mathcal{P}^-_hu+ \mathcal{P}_h^+r, \mathcal{P}_h^+r - \mathcal{P}^+_hp)\\ &\qquad+ \sum\limits_{i=1}^N \mathcal{F}_i(f;U,u_h;\mathcal{P}^-_hu).
    \end{align*}
    From equation \eqref{B_linear}, \eqref{bilinear5} and Lemma \ref{Shulemma2}, we end up with the following inequality
    \begin{align}
        \nonumber &\left((\mathcal{P}^-_hu)_t,\mathcal{P}^-_hu\right)_{L^2(\Omega)}+\norm{\mathcal{P}^+_hr}^2_{L^2(\Omega)}+\norm{\mathcal{P}^+_hp}^2_{L^2(\Omega)} +  \left(\Delta_{\frac{(\alpha-2)}{2}}\mathcal{P}_hq,\mathcal{P}_hq\right)_{L^2(\Omega)} \\&\nonumber\qquad\qquad+\frac{1}{4}\beta(\hat f;\mathcal{P}^-_hu)\nonumber\sum\limits_{i=1}^N\llbracket \mathcal{P}^-_hu\rrbracket^2_{i+\frac{1}{2}}\\
           \nonumber &\leq \left((\mathcal{P}^-_eU)_t,\mathcal{P}^-_hu\right)_{L^2(\Omega)} + C({\Omega})h^{2k+1} +\epsilon\norm{\mathcal{P}^+_hp}^2_{L^2(\Omega)}+c_1(\epsilon)\norm{\mathcal{P}_hq}^2_{L^2(\Omega)}+\epsilon\norm{\mathcal{P}^+_hr}^2_{L^2(\Omega)} \\&\nonumber\qquad+ c_2(\epsilon)\norm{\mathcal{P}^-_hu}^2_{L^2(\Omega)} + \left(C+C_{\ast}\left(\norm{\mathcal{P}^-_hu}_{L^\infty(\Omega)} + h^{-1}\norm{U-u_h}_{L^\infty(\Omega)}^2\right)\right)\norm{\mathcal{P}^-_hu}_{L^2(\Omega)}^2\\
            \label{nonlinerr}&\qquad + (C+C_{\ast}h^{-1}\norm{U-u_h}_{\infty}^2)h^{2k+1},
    \end{align}
    where $c_1,c_2$ are constants and $\epsilon>0$.
    Utilizing the Corollary \ref{infcoro}, Lemma \ref{Inner_frac} and the positivity of $\beta$ from Lemma \ref{Shulemma1}, we obtain
    \begin{align}\label{nonlin2err}
         &\frac{1}{2}\frac{d}{dt}\norm{\mathcal{P}^-_hu}^2_{L^2(\Omega)}\leq \left((\mathcal{P}^-_eU)_t,\mathcal{P}^-_hu\right)_{L^2(\Omega)} + C({\Omega})h^{2k+1} +C \norm{\mathcal{P}^-_hu}^2_{L^2(\Omega)}.
    \end{align}
     Since $\norm{\mathcal{P}^-_hu(\cdot,0)}^2_{L^2(\Omega)}=0$, again integrating with respect to $t$, we have 
      \begin{equation*}
        \begin{split}
            \frac{1}{2}\norm{\mathcal{P}^-_hu(\cdot,T)}^2_{L^2(\Omega)}\leq \int_0^T\left((\mathcal{P}^-_eU)_t,\mathcal{P}^-_hu\right)_{L^2(\Omega)}\,dt + C(T,\Omega)h^{2k+1} + C \int_0^T \norm{\mathcal{P}^-_hu(\cdot,t)}^2_{L^2(\Omega)}\,dt.
        \end{split}
    \end{equation*}
Using the standard approximation theory of the projection operator \cite{Ciarlet} and Gronwall’s inequality, we obtain the desired estimate \eqref{errestimateNL}.
\end{proof}
\begin{remark}
We have chosen an alternative flux at the interfaces of elements for $\hat u_h$, $\hat p_h$, and $\hat r_h$ to obtain the $L^2$-stability and error estimate for the nonlinear flux $f$. Another choice of fluxes is the central flux, sometimes referred to conservative flux, given by
         \begin{align*}
               &\hat p_h = \{\!\!\{p_h\}\!\!\},\qquad \hat r_h = \{\!\!\{r_h\}\!\!\},\qquad \hat u_h = \{\!\!\{u_h\}\!\!\}.
          \end{align*}
    Both choices of fluxes can be used, and the $L^2$-stability and error estimates can be derived for the central flux with straightforward adjustments to the proofs presented earlier. These adjustments involve the following calculations:
           \begin{equation*}
             \sum\limits_{i=1}^N (up)|_{x_{i-\frac{1}{2}}^+}^{x_{i+\frac{1}{2}}^-} = -u^+_{\frac{1}{2}}p^+_{\frac{1}{2}} + u^-_{N+\frac{1}{2}} p^-_{N+\frac{1}{2}} - \sum\limits_{i=1}^{N-1}\{\!\!\{u\}\!\!\}_{i+\frac{1}{2}} \llbracket p \rrbracket_{i+\frac{1}{2}} -\sum\limits_{i=1}^{N-1}\{\!\!\{p\}\!\!\}_{i+\frac{1}{2}} \llbracket u \rrbracket_{i+\frac{1}{2}}
           \end{equation*}
     and 
           \begin{equation*}
                \sum\limits_{i=1}^N (ur)|_{x_{i-\frac{1}{2}}^+}^{x_{i+\frac{1}{2}}^-} = -u^+_{\frac{1}{2}}r^+_{\frac{1}{2}} + u^-_{N+\frac{1}{2}} r^-_{N+\frac{1}{2}} - \sum\limits_{i=1}^{N-1}\{\!\!\{u\}\!\!\}_{i+\frac{1}{2}} \llbracket r \rrbracket_{i+\frac{1}{2}} -\sum\limits_{i=1}^{N-1}\{\!\!\{r\}\!\!\}_{i+\frac{1}{2}} \llbracket u \rrbracket_{i+\frac{1}{2}}.
          \end{equation*}
          To prove conservative properties for the semi-discrete scheme \eqref{LDGscheme1} with conservative fluxes is a complicated task, see \cite{bona2013conservative} for more details.
          However, our numerical implementation have revealed that the alternative fluxes consistently preform better than the central fluxes in terms of computational efficiency. Although our current approach has yielded a convergence rate of $k+\frac{1}{2}$, it is essential to note that different choices of numerical flux (Godunov or upwind flux) for $\hat f$ can impact the convergence rate significantly. This aspect, along with a fully discrete DG method with homogeneous mixed boundary conditions, will be explored in our future work.
     \end{remark}
   
\section{LDG for multi-dimensional fractional \text{KdV} equation}\label{sec4}
In this section, we extend the LDG scheme to multiple space dimensional equations involving fractional Laplacian in each direction. We consider the following Cauchy problem:
\begin{equation}\label{fkdvhd}
\begin{cases}
     U_t+\sum\limits_{i=1}^d f_i(U)_{x_i}-\sum\limits_{i=1}^d (-\Delta)_{i}^{\alpha_i/2}U_{x_i}=0,  \quad \mathbf{x}:=(x_1,x_2,\cdots,x_d) \in \mathbb{R}^d,~t\in(0,T],\\
     U(\mathbf{x},0) = U_{0_d}(\mathbf{x}), \quad \mathbf{x} \in \mathbb{R}^d,
\end{cases}
\end{equation}
where $T>0$ is fixed, $U_{0_d}$ represents the prescribed periodic initial condition, and $U$ is the unknown solution.
The non-local integro-differential operator $-(-\Delta)_{i}^{\alpha_i/2}$ in \eqref{fkdvhd} denotes the one-dimensional fractional Laplacian acting on the $i$-th component $x_i$, for $i=1,2,\cdots,d$, defined as in Section \ref{sec2}. More precisely
\begin{equation}\label{fracLfrachd}
    -(-\Delta)_i^{\alpha/2}u(\mathbf{x})=\frac{\partial^\alpha}{\partial|x_i|^\alpha}u(\mathbf{x}) = -\frac{{}_{-\infty}D_{x_i}^\alpha u(\mathbf{x}) + {}_{x_i}D_{\infty}^\alpha u(\mathbf{x})}{2\cos\left(\frac{\alpha_i\pi}{2}\right)}, \quad 1<\alpha_i<2.
\end{equation}
The left and right Riemann-Liouville derivatives in the $i$-th component are given by
\begin{align}
    \label{leftfracDhd} {}_{-\infty}D_{x_i}^\alpha u(\mathbf{x}) &= \frac{1}{\Gamma(n-\alpha)}\left(\frac{d}{dx_i}\right)^n\int_{-\infty}^{x_i} (x_i-t)^{n-\alpha_i -1}u(x_1,\cdots,x_{i-1},t,x_{i+1},\cdots,x_d)\,dt,\\
    \label{rightfracDhd}{}_{x_i}D_\infty^\alpha u(\mathbf{x}) &= \frac{1}{\Gamma(n-\alpha_i)}\left(-\frac{d}{dx_i}\right)^n\int^{\infty}_{x_i} (t-x_i)^{n-\alpha_i -1}u(x_1,\cdots,x_{i-1},t,x_{i+1},\cdots,x_d)\,dt,
\end{align}
for $i=1,2,\cdots,d$ and $n-1<\alpha_i<n$, respectively. We decompose the fractional Laplacian \eqref{fracLfrachd} as
\begin{equation}\label{frac_dcmphd}
     -(-\Delta)_i^{\alpha/2}u(\mathbf{x}) = \frac{d}{dx_i}\left(\Delta^i_{\frac{\alpha_i-2}{2}}\frac{du(\mathbf{x})}{dx_i}\right) = \frac{d^2}{d{x_i}^2}\left(\Delta^i_{\frac{\alpha_i-2}{2}}u(\mathbf{x})\right),
\end{equation}
where the fractional integral $\Delta^i_{-s/2}$, $0<s<1$ is given by
\begin{equation}\label{NegFracLhd}
\Delta^i_{-s/2}u(\mathbf{x}) =  \frac{{}_{-\infty}D_{x_i}^{-s}u(\mathbf{x}) + {}_{x_i}D^{-s}_{\infty}u(\mathbf{x})}{2\cos\left(\frac{s\pi}{2}\right)}.
\end{equation}
We discretize the domain $\Omega_d=\left(\prod\limits_{i=1}^d I^i\right)$, where $I^i=[-1,1]$, $i=1,2,\cdots,d$. It is worth noting that assuming the unit box as the domain simplifies the understanding of the method, but it is not essential. We denote the triangulation $\mathcal{T}_d$ of $\Omega_d$ as follows:
\[ \mathcal{T}_d:= \{D^\tau : D^\tau \text{ are non-overlapping polyhedra covering $\Omega_d$ completely, } \tau=1,2,\cdots,\mathcal{N}_d\}, \]
where $\mathcal{N}_d$ is the total number of triangles (we write triangle instead of polyhedra). We do not consider hanging nodes. Associated with the triangulation $\mathcal{T}_d$, we define the broken Sobolev spaces as follows: 
\[ H^1(\Omega_d,{\mathcal{T}}) := \{v:\Omega_d\to \mathbb{R} \text{ such that }\, v|_{D^\tau}\in H^1(D^\tau),\,\tau=1,2,\cdots,\mathcal{N}_d\}. \]
For a function $v\in H^1(\Omega_d,{\mathcal{T}})$, we denote the value of $v$ evaluated from the inside of the triangle $D^\tau$ by $v^{int,\tau}$ and from the outside of the triangle $D^\tau$ by $v^{ext,\tau}$.
We define the finite-dimensional discontinuous piecewise polynomial space $V_d^k\subset H^1(\Omega_d,{\mathcal{T}})$ as
\begin{equation}\label{elem_spacehd}
    V_d^k = \{v:\Omega_d \to \R \text{ such that } v|_{D^\tau}\in P^k (D^\tau),~ \forall \tau=1,2,\cdots,\mathcal{N}_d\}
\end{equation}
and the finite element space $W_d^k$ as the space of tensor product of piecewise polynomials with degree at most $k$ in each variable on every element, i.e.
\begin{equation}\label{elem_spacehdW}
    W_d^k = \{v:\Omega_d \to \R \text{ such that } v\in \mathcal{Q}^k \left(\prod\limits_{i=1}^d I^i_{j_i}\right),~ \forall j_i=1,2,\cdots,N_{i} ~\text{and} ~ i=1,\cdots,d\},
\end{equation}
where $I^i_{j_i} = [x_{i_{j_i-\frac{1}{2}}},x_{i_{j_i+\frac{1}{2}}}]$ is a partition of interval $I^i$ for $j_i = 1,2,\cdots,N_{i} $, and $N_{i}$ is the total number of partition of interval $I^i$ for all $i=1,\cdots,d$, and $\mathcal{Q}^k$ is the space of tensor products of one dimensional polynomials of degree up to $k$. 

Now, we introduce the auxiliary variables $\mathbf{P}=(P_1,P_2,\cdots,P_d)$, $\mathbf{Q}=(Q_1,Q_2,\cdots,Q_d)$, and $\mathbf{R}=(R_1,R_2,\cdots,R_d)$ such that
\begin{align*}
    P_i = \Delta^i_{(\alpha_i-2)/2}Q_i, \qquad Q_i = (R_i)_{x_i} , \qquad R_i= U_{x_i}, \qquad i=1,2,\cdots,d.
\end{align*}
With this definition, the equation \eqref{fkdvhd} can be rewritten as a system of equations
\begin{equation}
\begin{split}\label{systemfkdvhd}
    U_t & = -\sum\limits_{i=1}^d\left(f_i(U)+ P_i\right)_{x_i} ,\\
              P_i &= \Delta^i_{(\alpha_i-2)/2}Q_i, \quad i=1,2,\cdots,d,\\
              Q_i &= (R_i)_{x_i}, \quad i=1,2,\cdots,d,\\
              R_i &= U_{x_i}, \quad i=1,2,\cdots,d.
\end{split}
\end{equation}
Now, employing a methodology akin to that of the preceding section, we present the \textbf{LDG scheme}. Our objective is to determine approximations $(u_h,\mathbf{p}_h,\mathbf{q}_h,\mathbf{r}_h):=(u_h,(p_{1_h},\cdots,p_{d_h}),(q_{1_h},\cdots,q_{d_h}),$ $(r_{1_h},\cdots,r_{d_h}))\in H^1(0,T;V_d^k)\times (L^2(0,T;V_d^k))^d\times (L^2(0,T;V_d^k))^d\times (L^2(0,T;V_d^k))^d =: (\mathcal{T}_4\times\mathcal{V}_d^k)^d$ to the exact solution $(U,\mathbf{P},\mathbf{Q},\mathbf{R})$ of \eqref{systemfkdvhd}. This is subject to the condition that for all test functions $v\in H^1(0,T;V_d^k)$ and $w_i,z_i,s_i\in L^2(0,T;V_d^k)$, for all $i=1,2,\cdots,d$, the following system of equations holds for $\tau=1,2,\cdots,\mathcal{N}_d$:
\begin{equation}\label{LDGscheme1hd}
\begin{split}
     \int_{D^\tau}(u_h)_tv\,d\mathbf{x} & = \sum\limits_{i=1}^d\int_{D^\tau}\big(f_i(u_h) + (p_i)_h\big)v_{x_i}\,d\mathbf{x} - \int_{\partial D^\tau}(\hat f_{h,\mathbf n_\tau} + \hat{p}_{h,\tau}) v^{int,\tau}\,d\mathbf{\xi},\\
              \int_{D^\tau}p_{i_h}w_i\,d\mathbf{x} &= \int_{D^\tau}\Delta^i_{(\alpha_i-2)/2}q_{i_h}w_i\,d\mathbf{x}, \qquad i=1,2,\cdots,d,\\
              \int_{D^\tau}q_{i_h}z_i\,d\mathbf{x}&= -\int_{D^\tau}r_{i_h}(z_i)_{x_i}\,d\mathbf{x} + \int_{\partial D^\tau}(\hat{r}_i)_{h,\tau} z^{int,\tau}\,d\mathbf{\xi}, \qquad i=1,2,\cdots,d,\\
              \int_{D^\tau}r_{i_h}s_i\,d\mathbf{x}&= -\int_{D^\tau}u_h(s_i)_{x_i}\,d\mathbf{x} + \int_{\partial D^\tau}(\hat{u}_i)_{h,\tau} s^{int,\tau}\,d\mathbf{\xi}, \qquad i=1,2,\cdots,d,\\
            \int_{D^\tau}u_h^{0_d}v\,d\mathbf{x} &= \int_{D^\tau}U_{0_d}v\,d\mathbf{x},
\end{split}
\end{equation}
where $\partial D^\tau$ is the boundary of $D^\tau$, and ``the hats" are numerical fluxes which are defined by the following:
\begin{equation}\label{num_fluxhd}
    \hat f_{h,\mathbf n_\tau} = \widehat{f_{h,\mathbf n_\tau}}(u^{int,\tau},u^{ext,\tau}),\quad \hat{p}_{h,\tau} = \sum\limits_{i=1}^dp^+_{i_h}n_{i,\tau},\quad (\hat{r}_i)_{h,\tau} = r^+_{i_h}n_{i,\tau},\quad (\hat{u}_i)_{h,\tau} = u_h^-n_{i,\tau},
\end{equation}
for $i=1,2,\cdots,d$ and at the boundary of $\Omega_d$, choice of fluxes is obvious as $U$ has compact support. In this context, $\mathbf{n}_\tau = (n_{1,\tau},n_{2,\tau},\cdots,n_{d,\tau})$ signifies the outward unit normal vector for the triangle $D^\tau$ along its boundary $\partial D^\tau$. The terms $p_{i_h}^+$ and $r_{i_h}^+$ denote the values of $p_{i_h}$ and $r_{i_h}$ respectively, calculated from the plus side along an edge $e$ (of element $D^\tau$), typically formed by the boundaries of two adjacent elements. For a more detailed illustration, refer to \cite[Figure 3.1]{yan2002local}. The function $\widehat{f_{h,\mathbf n_\tau}}(u^{int,\tau},u^{ext,\tau})$ represents a monotone flux, which maintains Lipschitz continuity in both arguments. It aligns with $f_{\mathbf n_\tau}(U) = \sum\limits_{i=1}^d f_i(U)n_{i,\tau}$ and behaves as a decreasing function in $u^{int,\tau}$ while being an increasing function in $u^{ext,\tau}$. As an example, we select the Lax-Friedrichs flux
\begin{align}
    &\label{c}\widehat{f_{h,\mathbf n_\tau}}(u^{int,\tau},u^{ext,\tau}) = \frac{1}{2}\left(\sum\limits_{i=1}^d\big(f_i(u^{int,\tau})+f_i(u^{ext,\tau})\big)n_{i,\tau}-\gamma(u^{ext,\tau}-u^{int,\tau})\right),\\
    &\nonumber\gamma = \max\limits_U|f'_{\mathbf n_\tau}(U)|,
\end{align}
where the maximum is taken over relevant range of $U$. 

\begin{remark}
We would like to remark that our further analysis applies specifically to the finite element space $W_d^k$, and not to the standard $k$-th degree polynomial space $V_d^k$. The reason for this distinction is that our primary technique involves a special tensor projection, denoted as $\mathcal{P}^\pm$, which effectively removes certain jump terms that arise from higher-order derivative terms. However, in our numerical experiments in section \ref{sec6}, we confirm the optimal order of convergence for the LDG method when applied to $V_d^k$.
\end{remark}

\subsection{Error estimate}

We would like to derive an error estimate for the equation \eqref{fkdvhd}. For simplicity, let us consider it in two dimension, i.e. $d=2$  with the notation $(x,y):=(x_1,x_2)$. The following analysis can be extended to the higher dimension case with the same line of technique. Thus the equation \eqref{fkdvhd} reads 
\begin{equation}\label{fkdvhd2}
\begin{cases}
     U_t+f_1(U)_{x}+f_2(U)_{y}-(-\Delta)_{1}^{\alpha_1/2} U_{x}-(-\Delta)_{2}^{\alpha_2/2}U_{y}=0,  \quad (x,y) \in \Omega_2,~t\in(0,T],\\
     U(x,y,0) = U_{0}(x,y), \quad  (x,y) \in \Omega_2.
\end{cases}
\end{equation}
The semi-discrete LDG scheme \eqref{LDGscheme1hd} can be rewritten in the finite element space $W_d^k$ as: find the approximations $u_h,p_{1_h},p_{2_h},q_{1_h},q_{2_h},r_{1_h},r_{2_h}\in W_h^d$ to the exact solution $(U,P_1,P_2,Q_1,Q_2,$ $R_1,R_2)$ of \eqref{systemfkdvhd} such that $\forall v,w_1,w_2,z_1,z_2,s_1,s_2\in W_h^d$ and $j_i = 1,2,\cdots,N_{i},~i=1,2$, we have the following system of equations
\begin{align*}
    \int_{I^1_{j_1}}\int_{I^2_{j_2}}(u_h)_tv\,dy\,dx & = \int_{I^1_{j_1}}\int_{I^2_{j_2}}\big(f_1(u_h) + p_{1_h}\big)v_{x}\,dy\,dx+\int_{I^1_{j_1}}\int_{I^2_{j_2}}\big(f_2(u_h) + p_{2_h}\big)v_{y}\,dy\,dx\\ &\quad - \int_{I^2_{j_2}}\big((\hat f_1  + \hat p_{1_h}) v^{-}\big)_{j_1+\frac{1}{2},y}\,dy + \int_{I^2_{j_2}}\big((\hat f_1  + \hat p_{1_h} )v^{+}\big)_{j_1-\frac{1}{2},y}\,dy\\&\quad - \int_{I^1_{j_1}}\big((\hat f_2  + \hat p_{2_h}) v^{-}\big)_{x,j_2+\frac{1}{2}}\,dx + \int_{I^1_{j_1}}\big((\hat f_2 + \hat p_{2_h}) v^{+}\big)_{x,j_2-\frac{1}{2}}\,dx,
\end{align*}
\begin{align}\label{LDGscheme1hd2}
              \nonumber\int_{I^1_{j_1}}\int_{I^2_{j_2}}p_{i_h}w_i\,dy\,dx &= \int_{I^1_{j_1}}\int_{I^2_{j_2}}\Delta^i_{(\alpha_i-2)/2}q_{i_h}w_i\,dy\,dx, \qquad i=1,2,\\
             \nonumber \int_{I^1_{j_1}}\int_{I^2_{j_2}}q_{1_h}z_1\,dy\,dx&= -\int_{I^1_{j_1}}\int_{I^2_{j_2}}r_{1_h}(z_1)_{x}\,dy\,dx + \int_{I_{j_2}^2}(\hat{r}_1 z^{-}_1)_{j_1+\frac{1}{2},y}\,dy-\int_{I_{j_2}^2}(\hat{r}_2 z^{+}_1)_{j_1-\frac{1}{2},y}\,dy,\\
             \nonumber \int_{I^1_{j_1}}\int_{I^2_{j_2}}q_{2_h}z_2\,dy\,dx&= -\int_{I^1_{j_1}}\int_{I^2_{j_2}}r_{2_h}(z_2)_{y}\,dy\,dx + \int_{I_{j_1}^1}(\hat{r}_2 z^{-}_2)_{x,j_2+\frac{1}{2}}\,dx-\int_{I_{j_1}^1}(\hat{r}_2 z^{+}_2)_{x,j_2-\frac{1}{2}}\,dx,\\
             \nonumber  \int_{I^1_{j_1}}\int_{I^2_{j_2}}r_{1_h}s_1\,dy\,dx&= -\int_{I^1_{j_1}}\int_{I^2_{j_2}}u_{h}(s_1)_{x}\,dy\,dx + \int_{I_{j_2}^2}(\hat{u}_h s^{-}_1)_{j_1+\frac{1}{2},y}\,dy-\int_{I_{j_2}^2}(\hat{u}_h s^{+}_1)_{j_1-\frac{1}{2},y}\,dy,\\
             \nonumber \int_{I^1_{j_1}}\int_{I^2_{j_2}}r_{2_h}s_2\,dy\,dx&= -\int_{I^1_{j_1}}\int_{I^2_{j_2}}u_{h}(s_2)_{y}\,dy\,dx + \int_{I_{j_1}^1}(\hat{u}_h s^{-}_2)_{x,j_2+\frac{1}{2}}\,dx-\int_{I_{j_1}^1}(\hat{u}_h s^{+}_2)_{x,j_2-\frac{1}{2}}\,dx,\\
             \int_{I^1_{j_1}}\int_{I^2_{j_2}}u_h^{0}v\,dy\,dx &=  \int_{I^1_{j_1}}\int_{I^2_{j_2}}U_{0}v\,dy\,dx,
\end{align}
where ``hat'' terms in the above scheme are the numerical fluxes, which can be defined by
\begin{align}\label{fluxhd2}
    \hat u_h= u^-_h, \quad \hat p_{1_h} = p_{1_h}^+, \quad \hat r_1= r_{1_h}^+
\end{align}
at interface $\{j_1+\frac{1}{2},y\}$, $j_1=1,2,\cdots,N_1-1$ and 
\begin{align}\label{fluxhd22}
    \hat u_h = u^-_h, \quad \hat p_{2_h} = p_{2_h}^+, \quad \hat r_{2_h} =  r_{2_h}^+ 
\end{align}
at interface $\{x,j_2+\frac{1}{2}\}$, $j_2=1,2,\cdots,N_2-1$. Since $U$ has a compact support in $\Omega_2$, choice of fluxes at the boundary of $\Omega_2$ is obvious.
Furthermore, we choose any monotone fluxes for $\hat f_1 =  \hat f_1(u^-_h,u_h^+)$  and $\hat f_2 = \hat f_2(u^-_h,u_h^+)$ at interface  $\{j_1+\frac{1}{2},y\}$ and $\{x,j_2+\frac{1}{2}\}$ respectively, which have the uniform dissipation property. To proceed further, we define the linear (in each component) compact form of the scheme \eqref{LDGscheme1hd2}
\begin{align}\label{compactformhd23}
  \nonumber \mathcal B^2(u_h,p_{1_h},p_{2_h},& q_{1_h},q_{2_h},r_{1_h},r_{2_h};v,w_1,w_2,z_1,z_2,s_1,s_2)\\ \nonumber =&  \sum_{j_1=1}^N\sum_{j_2=1}^N \Bigg[ \int_{I^1_{j_1}}\int_{I^2_{j_2}}(u_h)_tv\,dy\,dx  -\int_{I^1_{j_1}}\int_{I^2_{j_2}}p_{1_h}v_{x}\,dy\,dx 
     -\int_{I^1_{j_1}}\int_{I^2_{j_2}}p_{2_h}v_{y}\,dy\,dx  \\& \nonumber
     + \int_{I^2_{j_2}}(\hat p_{1_h} v^{-})_{j_1+\frac{1}{2},y}\,dy 
     - \int_{I^2_{j_2}}(\hat p_{1_h} v^{+})_{j_1-\frac{1}{2},y}\,dy + \int_{I^1_{j_1}}(\hat p_{2_h} v^{-})_{x,j_2+\frac{1}{2}}\,dx \\& \nonumber
     -\int_{I^1_{j_1}}( \hat p_{2_h} v^{+})_{x,j_2-\frac{1}{2}}\,dx +\int_{I^1_{j_1}}\int_{I^2_{j_2}}p_{1_h}w_1\,dy\,dx 
     - \int_{I^1_{j_1}}\int_{I^2_{j_2}}\Delta^1_{\frac{(\alpha_1-2)}{2}}q_{1_h}w_1\,dy\,dx \\&\nonumber
     +\int_{I^1_{j_1}}\int_{I^2_{j_2}}p_{2_h}w_2\,dy\,dx 
     - \int_{I^1_{j_1}}\int_{I^2_{j_2}}\Delta^2_{\frac{(\alpha_2-2)}{2}}q_{2_h}w_2\,dy\,dx +\int_{I^1_{j_1}}\int_{I^2_{j_2}}q_{1_h}z_1\,dy\,dx \\& \nonumber
     +\int_{I^1_{j_1}}\int_{I^2_{j_2}}r_{1_h}(z_1)_{x}\,dy\,dx 
    -\int_{I_{j_2}^2}(\hat{r}_1 z^{-}_1)_{j_1+\frac{1}{2},y}\,dy
    +\int_{I_{j_2}^2}(\hat{r}_2 z^{+}_1)_{j_1-\frac{1}{2},y}\,dy\\& \nonumber
    +\int_{I^1_{j_1}}\int_{I^2_{j_2}}q_{2_h}z_2\,dy\,dx
    +\int_{I^1_{j_1}}\int_{I^2_{j_2}}r_{2_h}(z_2)_{y}\,dy\,dx -\int_{I_{j_1}^1}(\hat{r}_2 z^{-}_2)_{x,j_2+\frac{1}{2}}\,dx\\& \nonumber
    +\int_{I_{j_1}^1}(\hat{r}_2 z^{+}_2)_{x,j_2-\frac{1}{2}}\,dx
    +\int_{I^1_{j_1}}\int_{I^2_{j_2}}r_{1_h}s_1\,dy\,dx+
    \int_{I^1_{j_1}}\int_{I^2_{j_2}}u_{h}(s_1)_{x}\,dy\,dx \\& \nonumber
    - \int_{I_{j_2}^2}(\hat{u}_h s^{-}_1)_{j_1+\frac{1}{2},y}\,dy
    +\int_{I_{j_2}^2}(\hat{u}_h s^{+}_1)_{j_1-\frac{1}{2},y}\,dy
    +\int_{I^1_{j_1}}\int_{I^2_{j_2}}r_{2_h}s_2\,dy\,dx\\&
    +\int_{I^1_{j_1}}\int_{I^2_{j_2}}u_{h}(s_2)_{y}\,dy\,dx 
    - \int_{I_{j_1}^1}(\hat{u}_h s^{-}_2)_{x,j_2+\frac{1}{2}}\,dx+\int_{I_{j_1}^1}(\hat{u}_h s^{+}_2)_{x,j_2-\frac{1}{2}}\,dx\Bigg].
\end{align}
We choose the test functions $(v,w_1,w_2,z_1,z_2,s_1,s_2) =  (u_h,-q_{1_h}+p_{1_h}+r_{1_h},-q_{2_h}+p_{2_h}+r_{2_h},u_h+r_{1_h},u_h+r_{2_h},r_{1_h}-p_{1_h},r_{2_h}-p_{2_h})$ in the linear form $\mathcal{B}^2$ defined by \eqref{compactformhd23} to get the following identity
    \begin{align}\label{B2lemmahd}
  \nonumber\mathcal{B}^2(u_h,&p_{1_h},p_{2_h},q_{1_h},q_{2_h},r_{1_h},r_{2_h};u_h,-q_{1_h}+p_{1_h}+r_{1_h},-q_{2_h}+p_{2_h}+r_{2_h},u_h+r_{1_h},u_h+r_{2_h},\\&\qquad \nonumber r_{1_h}-p_{1_h},r_{2_h}-p_{2_h}) \\=& \nonumber \sum_{j_1=1}^N\sum_{j_2=1}^N \Bigg[ \int_{I^1_{j_1}}\int_{I^2_{j_2}}(u_h)_tv\,dy\,dx  -\int_{I^1_{j_1}}\int_{I^2_{j_2}}p_{1_h}q_{1_h}\,dy\,dx  -\int_{I^1_{j_1}}\int_{I^2_{j_2}}p_{2_h}q_{2_h}\,dy\,dx \\&\nonumber
  +\norm{p_{1_h}}^2_{L^2(I^1_{j_1}\times I^2_{j_2})}  +\norm{p_{2_h}}^2_{L^2(I^1_{j_1}\times I^2_{j_2})} + \int_{I^1_{j_1}}\int_{I^2_{j_2}}(\Delta^1_{\frac{(\alpha_1-2)}{2}}q_{1_h})q_{1_h}\,dy\,dx \\&\nonumber
  + \int_{I^1_{j_1}}\int_{I^2_{j_2}}(\Delta^2_{\frac{(\alpha_2-2)}{2}}q_{2_h})q_{2_h}\,dy\,dx - \int_{I^1_{j_1}}\int_{I^2_{j_2}}(\Delta^1_{\frac{(\alpha_1-2)}{2}}q_{1_h})p_{1_h}\,dy\,dx \\&\nonumber
  -\int_{I^1_{j_1}}\int_{I^2_{j_2}}(\Delta^2_{\frac{(\alpha_2-2)}{2}}q_{2_h})p_{2_h}\,dy\,dx - \int_{I^1_{j_1}}\int_{I^2_{j_2}}(\Delta^1_{\frac{(\alpha_1-2)}{2}}q_{1_h})r_{1_h}\,dy\,dx \\&\nonumber
  -\int_{I^1_{j_1}}\int_{I^2_{j_2}}(\Delta^2_{\frac{(\alpha_2-2)}{2}}q_{2_h})r_{2_h}\,dy\,dx +\int_{I^1_{j_1}}\int_{I^2_{j_2}}q_{1_h}u_{h}\,dy\,dx +\int_{I^1_{j_1}}\int_{I^2_{j_2}}q_{2_h}u_{h}\,dy\,dx\\&\nonumber
  +\int_{I^1_{j_1}}\int_{I^2_{j_2}}q_{1_h}r_{1_h}\,dy\,dx +\int_{I^1_{j_1}}\int_{I^2_{j_2}}q_{2_h}r_{2_h}\,dy\,dx +\norm{r_{1_h}}^2_{L^2(I^1_{j_1}\times I^2_{j_2})} 
  +\norm{r_{2_h}}^2_{L^2(I^1_{j_1}\times I^2_{j_2})}\Bigg]\\&+
 \frac{1}{2}\sum\limits_{j_1=1}^{N_1-1} \llbracket r_{1_h} \rrbracket_{{j_1}+\frac{1}{2},y}^2 + \frac{1}{2}\sum\limits_{j_2=1}^{N_2-1} \llbracket r_{2_h} \rrbracket_{x,{j_2}+\frac{1}{2}}^2 +\frac{1}{2}(r_{1_h{_{\frac{1}{2},y}}}^+)^2+\frac{1}{2}(r_{2_h{_{x,\frac{1}{2}}}}^+)^2.
    \end{align}

\begin{lemma}\label{stablemmahd}
    The LDG scheme \eqref{LDGscheme1hd2}-\eqref{fluxhd22} is $L^2$-stable.
\end{lemma}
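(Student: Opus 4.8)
The plan is to mirror the one-dimensional stability argument of Lemma \ref{stablemma}, now starting from the energy identity \eqref{B2lemmahd} that has already been assembled. Since $(u_h,p_{1_h},p_{2_h},q_{1_h},q_{2_h},r_{1_h},r_{2_h})$ solves the scheme \eqref{LDGscheme1hd2}--\eqref{fluxhd22}, testing the full (nonlinear) weak form against $(v,w_1,w_2,z_1,z_2,s_1,s_2)=(u_h,-q_{1_h}+p_{1_h}+r_{1_h},-q_{2_h}+p_{2_h}+r_{2_h},u_h+r_{1_h},u_h+r_{2_h},r_{1_h}-p_{1_h},r_{2_h}-p_{2_h})$ equates the right-hand side of \eqref{B2lemmahd} to the convective flux contributions that were deliberately omitted from the linear form \eqref{compactformhd23}. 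First I would dispose of these convective terms exactly as in \eqref{F_u} and the estimate following \eqref{perturbation5}: on each interior interface the entropy-type combination $\llbracket F_i(u_h)\rrbracket-\hat f_i\llbracket u_h\rrbracket$ is nonnegative by monotonicity of the Lax--Friedrichs flux \eqref{c}, while the boundary contributions vanish by the compact support of $U$ in $\Omega_2$. Hence they may be moved to the favourable side and discarded.

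Next I would read off from \eqref{B2lemmahd} the manifestly nonnegative quantities, namely the squared norms $\norm{p_{i_h}}^2$ and $\norm{r_{i_h}}^2$, the interface jumps $\tfrac12\llbracket r_{i_h}\rrbracket^2$, the boundary squares $\tfrac12(r^+_{i_h})^2$, and the fractional diagonal terms $(\Delta^i_{\frac{(\alpha_i-2)}{2}}q_{i_h},q_{i_h})$, which equal $|q_{i_h}|^2_{J^{-s_i}}\ge 0$ by Lemma \ref{Inner_frac} applied in the $i$-th coordinate with $s_i=2-\alpha_i$; meanwhile the $\int (u_h)_t u_h$ term assembles into $\tfrac12\frac{d}{dt}\norm{u_h}^2_{L^2(\Omega_2)}$. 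The remaining cross terms $-(p_{i_h},q_{i_h})$, $(q_{i_h},r_{i_h})$, $(q_{i_h},u_h)$, $-(\Delta^i q_{i_h},p_{i_h})$ and $-(\Delta^i q_{i_h},r_{i_h})$ are controlled by Young's inequality, the two fractional inner products being first bounded through the $L^2$-continuity of $\Delta^i_{-s_i/2}$ from Lemma \ref{kilbas}. Choosing the Young parameters so that the $\norm{p_{i_h}}^2$ and $\norm{r_{i_h}}^2$ contributions are absorbed by the corresponding coercive terms, the residual $\norm{q_{i_h}}^2$ is absorbed by the fractional diagonal terms after invoking the directional fractional Poincar\'e--Friedrichs inequality (Lemma \ref{fpfred}) in the form $\norm{q_{i_h}}^2_{L^2}\le C|q_{i_h}|^2_{J^{-s_i}}$, with Proposition \ref{fracemb} ensuring $q_{i_h}$ lies in the relevant fractional spaces. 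What survives is a multiple of $\norm{u_h}^2_{L^2(\Omega_2)}$, yielding $\tfrac12\frac{d}{dt}\norm{u_h}^2_{L^2(\Omega_2)}\le C\norm{u_h}^2_{L^2(\Omega_2)}$, and Gronwall's inequality then delivers the $L^2$-stability bound.

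The main obstacle is the control of the auxiliary variable $q_{i_h}$: unlike $p_{i_h}$ and $r_{i_h}$, it carries no free $L^2$-coercive term, so all of its cross-term contributions must be routed through the single directional coercivity $(\Delta^i q_{i_h},q_{i_h})=|q_{i_h}|^2_{J^{-s_i}}$. Making this quantitative forces the accumulated Young constants on $\norm{q_{i_h}}^2$ to remain below the reciprocal of the Poincar\'e--Friedrichs constant, so a careful bookkeeping of the splitting parameters across the five cross terms in each coordinate is needed. A secondary, genuinely multi-dimensional point is that Lemmas \ref{Inner_frac}--\ref{fpfred} are one-dimensional identities that must be applied slicewise along each $x_i$-direction; this is precisely where the tensor-product structure of $W_d^k$ (rather than $V_d^k$) enters, since each slice must remain a one-dimensional polynomial for the fractional coercivity and continuity estimates to apply verbatim.
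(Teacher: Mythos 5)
Your proposal is correct and is essentially the paper's own proof: the paper disposes of Lemma \ref{stablemmahd} by deriving the identity \eqref{B2lemmahd} (built from exactly the test functions you chose) and then stating that the rest ``follows in a similar way as in Lemma \ref{stablemma} with the similar choices of test functions in both the directions'' --- i.e.\ monotone-flux positivity for the convective and boundary terms, Young's inequality together with Lemma \ref{kilbas} for the cross terms, coercivity and absorption via Lemmas \ref{Inner_frac} and \ref{fpfred} applied direction-wise, and Gronwall --- which is precisely your plan. If anything you are more explicit than the paper (e.g.\ in flagging the balance between the Young parameters and the Poincar\'e--Friedrichs constant, which the paper glosses over), though your side remark that the slicewise fractional estimates require the tensor-product space $W_d^k$ is inessential: Fubini yields the direction-wise identities for any $L^2$ function, and the paper invokes $W_d^k$ only for the tensor projections in the error analysis, not for stability.
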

The proof of the above Lemma \ref{stablemmahd} follows in a similar way as in Lemma \ref{stablemma} with the similar choices of test functions in both the directions, so we are omitting the details.

Following the approach from the one-dimensional case, we define the projection operators as in \cite{lesaint1974finite, cockburn2001superconvergence}. On a rectangle $\Omega_2$, we introduce the operators
\begin{equation}\label{proj2d}
    \mathbb{P}v = \mathcal{P}_{x}\otimes\mathcal{P}_{y}v,\qquad \mathbb{P}^\pm v = \mathcal{P}^\pm_{x}\otimes\mathcal{P}^\pm_{y}v,
\end{equation}
where the subscripts $x$ and $y$ indicate that the one-dimensional projections $\mathcal{P}$ and $\mathcal{P}^\pm$ are defined in \eqref{projectionprop} with respect to the corresponding variables and $\otimes$ is a tensor product.
We enlisted some properties of the projection operators $\mathbb{P}$ and $\mathbb{P}^\pm$. Let $g\in H^1(\Omega_2,\mathcal{T})$ be a sufficiently smooth function. Then we have (see \cite{cockburn2001superconvergence})
\begin{enumerate}[label=\roman*)]\label{1-4}
    \item $L^2$-projection:
    \begin{equation}\label{projectionprophd}
     \int_{I^1_{j_1}}\int_{I^2_{j_2}}\Big(\mathbb{P}^\pm g(x,y) - g(x,y)\Big)v(x,y)\,dy\,dx=0,
    \end{equation}
for any $v\in (P^{k-1}(I^1_{j_1})\otimes P^k(I^2_{j_2}))\cup (P^{k}(I^1_{j_1})\otimes P^{k-1}(I^2_{j_2}))$.
\item At $x$-interface
\begin{equation}\label{projectionprophd2}
    \begin{split}
         \int_{I^2_{j_2}}\Big(\mathbb{P}^\pm g(x^\pm_{j_1\mp\frac{1}{2}},y) - g(x^\pm_{j_1\mp\frac{1}{2}},y)\Big)v(x^\pm_{j_1\mp\frac{1}{2}},y)\,dy=0, \quad \forall v\in \mathcal{Q}^k(I^1_{j_1}\times I^2_{j_2}).
    \end{split}
\end{equation}
\item At $y$-interface
\begin{equation}\label{projectionprophd3}
    \begin{split}
         \int_{I^1_{j_1}}\Big(\mathbb{P}^\pm g(x,x^\pm_{j_2\mp\frac{1}{2}}) - g(x,x^\pm_{j_2\mp\frac{1}{2}})\Big)v(x,x^\pm_{j_2\mp\frac{1}{2}})\,dx=0, \quad \forall v\in \mathcal{Q}^k(I^1_{j_1}\times I^2_{j_2}).
    \end{split}
\end{equation}
\item Following approximation property holds:
\begin{equation}\label{Proj_app2d}
    \norm{\mathbb{P}g-g}_{L^2(\Omega_2)} + h\norm{\mathbb{P}g-g}_{L^\infty(\Omega_2)} + h^{\frac{1}{2}}\norm{\mathbb{P}g-g}_{\Gamma_h}\leq Ch^{k+1},
\end{equation}
where $C$ is a constant depends only on $g$.
\end{enumerate}

For simplicity, we define a few notations related to the projection operators $\mathbb P$ and $\mathbb P^\pm$ as follows
\begin{align*}
    \mathbb{P}^\pm_hu &= \mathbb{P}^\pm U-u_h ,\quad \mathbb{P}_hu = \mathbb{P}U-u_h, \\  \mathbb{P}^\pm_hp_i &= \mathbb{P}^\pm P_i-p_{i_h} ,\quad \mathbb{P}_hp_i = \mathbb{P}P_i-p_{i_h}, \qquad i=1,2,\\
    \mathbb{P}^\pm_hq_i &= \mathbb{P}^\pm Q_i-q_{i_h} ,\quad \mathbb{P}_hq_i = \mathbb{P}Q_i-q_{i_h}, \qquad i=1,2,\\
   \mathbb{P}^\pm_hr_i &= \mathbb{P}^\pm R_i-r_{i_h} ,\quad \mathbb{P}_hr_i = \mathbb{P}R_i-r_{i_h}, \qquad i=1,2,\\
    \mathbb{P}^\pm_eU &= \mathbb{P}^\pm U-U ,\quad \mathbb{P}_eU = \mathbb{P}U-U,\\
    \mathbb{P}^\pm_ep_i &= \mathbb{P}^\pm P_i-P_i ,\quad \mathbb{P}_ep_i = \mathbb{P}P_i-P_i, \qquad i=1,2,\\
    \mathbb{P}^\pm_eq_i &= \mathbb{P}^\pm Q_i-Q_i ,\quad \mathbb{P}_eq_i = \mathbb{P}Q_i-Q_i, \qquad i=1,2,\\
   \mathbb{P}^\pm_er_i &= \mathbb{P}^\pm R_i-R_i ,\quad \mathbb{P}_er_i = \mathbb{P}R_i-R_i, \qquad i=1,2.
\end{align*}

To figure out the estimates for the nonlinear fluxes \( f_1 \) and \( f_2 \), we need an \emph{a priori} assumption assumption stronger than the one-dimensional case \cite{xu2007error}.  Specifically, for sufficiently small \( h \), we assume the following condition holds
\begin{equation}\label{prioriass2}
    \norm{U-u_h}_{L^2(\Omega_2)}\leq h^{\frac{3}{2}}.
\end{equation}
This stronger assumption is necessary due to the following inverse inequality in two dimensions, which are characterized by the bound
\begin{equation*}
    \norm{u}_{L^\infty(\Omega_2)}\leq Ch^{-1}\norm{u}_{L^2(\Omega_2)}.
\end{equation*}
However, \emph{a priori} assumption is not necessary for the linear fluxes $f_1$ and $f_2$.
Now we provide an error estimate for the LDG scheme \eqref{LDGscheme1hd2}-\eqref{fluxhd22} specifically tailored for two dimensions. However, we opt not to reiterate the complete proof presented in the previous section concerning one dimension. Instead, we present error equations and an energy inequality, which helps to follow the proof in a similar structure.

\begin{theorem}\label{erreqn2thm}
Let $U$ be an exact solution of the Cauchy problem \eqref{fkdvhd2}. We assume that $U$ is sufficiently smooth and nonlinear fluxes $f_1,f_2\in C^3$. Let $u_h$ be an approximate solution of $U$ obtained by the LDG scheme \eqref{LDGscheme1hd2}-\eqref{fluxhd22}. Let $W_2^k$ be the piecewise tensor product polynomials of degree $k\geq 2$.
Then, the following error estimate holds for the small enough $h$:
\begin{equation}\label{temp_err_1}
    \norm{U-u_h}_{L^2(\Omega_2)}\leq Ch^{k+\frac{1}{2}},
\end{equation}
where $C = C\big(T,k,\norm{U}_{H^k},|f^{(m)}_1|,|f^{(m)}_2|\big)$, $m=1,2,3$.
\end{theorem}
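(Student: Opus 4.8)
The plan is to mirror the one-dimensional argument of Theorem~\ref{NLerror}, replacing the Gauss--Radau projections $\mathcal{P}^\pm$ by the tensor-product projections $\mathbb{P},\mathbb{P}^\pm$ of \eqref{proj2d} and working with the compact form $\mathcal{B}^2$ of \eqref{compactformhd23}. First I would set $P_i=\Delta^i_{(\alpha_i-2)/2}Q_i$, $Q_i=(R_i)_{x_i}$, $R_i=U_{x_i}$ and observe that both the exact tuple and the numerical tuple annihilate the full nonlinear form. Writing $\mathcal{B}^2=\mathcal{B}^2_\lambda-\sum\mathcal{F}$ with $\lambda=0$, where $\mathcal{F}$ is the coordinatewise analogue of \eqref{Nonlinearpart} summed over the $x$- and $y$-directions, subtraction yields the error equation
\begin{equation*}
\mathcal{B}^2_\lambda(U-u_h,\mathbf P-\mathbf p_h,\mathbf Q-\mathbf q_h,\mathbf R-\mathbf r_h;v,w_1,w_2,z_1,z_2,s_1,s_2)=\sum\mathcal{F}(f_1,f_2;U,u_h;v).
\end{equation*}
Decomposing each error as $U-u_h=\mathbb{P}^-_hu-\mathbb{P}^-_eU$ (and similarly for $p_i,q_i,r_i$) and choosing the test functions $v=\mathbb{P}^-_hu$, $w_i=-\mathbb{P}_hq_i+\mathbb{P}^+_hp_i+\mathbb{P}^+_hr_i$, $z_i=\mathbb{P}^-_hu+\mathbb{P}^+_hr_i$, $s_i=\mathbb{P}^+_hr_i-\mathbb{P}^+_hp_i$ (the analogues of those fixed in \eqref{B2lemmahd}), the left-hand side becomes the coercive expression of \eqref{B2lemmahd}, whose positive contributions are $\tfrac12\tfrac{d}{dt}\norm{\mathbb{P}^-_hu}^2$, the terms $\norm{\mathbb{P}^+_hp_i}^2$ and $\norm{\mathbb{P}^+_hr_i}^2$, the positive fractional inner products, and the interface jump squares.

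For the right-hand side I would invoke the tensor-projection orthogonality \eqref{projectionprophd}--\eqref{projectionprophd3} to annihilate exactly those volume and interface terms in which a derivative of a test function (hence a drop of one degree in the corresponding variable) meets a projection error; this is the two-dimensional replacement for the point-value cancellations of the one-dimensional proof and is where the tensor structure of $\mathbb{P}^\pm$ is essential. The surviving terms are the time pairing $((\mathbb{P}^-_eU)_t,\mathbb{P}^-_hu)$, the fractional pairings, and $O(h^{k+1})$ boundary remainders, which I would absorb by Young's inequality together with \eqref{Proj_app2d}. The fractional contributions require a two-dimensional version of Lemma~\ref{Negfracapprox}: applying that lemma slice-by-slice in the $x_i$-variable and using the boundedness of $\Delta^i_{-s/2}$ from Lemma~\ref{kilbas} in the transverse variable gives $\norm{\Delta^i_{(\alpha_i-2)/2}\mathbb{P}_eq_i}_{L^2(\Omega_2)}\le Ch^{k+1}$, after which Lemma~\ref{Inner_frac} handles the remaining positive fractional term.

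The nonlinear flux terms $\sum\mathcal{F}(f_1,f_2;U,u_h;\mathbb{P}^-_hu)$ are controlled by the coordinatewise analogue of Lemma~\ref{Shulemma2}, producing a negative $\beta$-weighted jump term (discarded by the nonnegativity in Lemma~\ref{Shulemma1}) at the cost of a factor $\bigl(C+C_\ast(\norm{\mathbb{P}^-_hu}_{L^\infty}+h^{-1}\norm{U-u_h}_{L^\infty}^2)\bigr)$. This is the main obstacle and the source of the hypotheses $k\ge2$ and \eqref{prioriass2}: since the two-dimensional inverse inequality only gives $\norm{\cdot}_{L^\infty}\le Ch^{-1}\norm{\cdot}_{L^2}$, the stronger a priori bound $\norm{U-u_h}_{L^2(\Omega_2)}\le h^{3/2}$ is precisely what forces $\norm{U-u_h}_{L^\infty}\le Ch^{1/2}$ (the two-dimensional Corollary~\ref{infcoro}), keeping $h^{-1}\norm{U-u_h}_{L^\infty}^2\le C$ bounded. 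Collecting all terms I would arrive at
\begin{equation*}
\tfrac12\tfrac{d}{dt}\norm{\mathbb{P}^-_hu}^2_{L^2(\Omega_2)}\le\bigl((\mathbb{P}^-_eU)_t,\mathbb{P}^-_hu\bigr)_{L^2(\Omega_2)}+Ch^{2k+1}+C\norm{\mathbb{P}^-_hu}^2_{L^2(\Omega_2)},
\end{equation*}
and, since $\norm{\mathbb{P}^-_hu(\cdot,0)}_{L^2(\Omega_2)}=0$, Gronwall's inequality combined with \eqref{Proj_app2d} yields $\norm{\mathbb{P}^-_hu}_{L^2(\Omega_2)}\le Ch^{k+1/2}$; the triangle inequality then gives \eqref{temp_err_1}. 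Finally the bootstrap closes because $Ch^{k+1/2}<\tfrac12h^{3/2}$ for $k\ge2$ and $h$ small, which self-consistently justifies \eqref{prioriass2}.
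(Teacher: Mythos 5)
Your proposal follows essentially the same route as the paper's proof: the same error equation \eqref{compactformhd2}, the same choice of tensor-projection test functions as in \eqref{B2lemmahd}, the same use of the projection properties \eqref{projectionprophd}--\eqref{Proj_app2d} together with the a priori assumption \eqref{prioriass2} and the one-dimensional nonlinear-flux estimate, and the same Gronwall conclusion. It is correct, and in fact supplies details the paper leaves implicit, namely the slice-by-slice two-dimensional extension of Lemma \ref{Negfracapprox} (combined with the boundedness of $\Delta^i_{-s/2}$ from Lemma \ref{kilbas}) and the bootstrap argument showing why the hypothesis $k\geq 2$ is needed to close \eqref{prioriass2}.
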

\begin{proof}
    We begin by deriving an error equation for the scheme \eqref{LDGscheme1hd2}-\eqref{fluxhd22}. Since $u_h$ is an approximate solution and $(U,P_1,P_2,Q_1,Q_2, R_1,R_2)$  satisfies the scheme \eqref{LDGscheme1hd2}-\eqref{fluxhd22}, we end up with the following error equation in each cell:

\begin{equation}\label{errequn2}
 \begin{split}
      \mathcal{U}_{j_1j_2}&(U,u_{h};v,s_1,s_2)- \mathcal{F}_{j_1j_2}(f_1,f_2;U,u_h;v)
     +\mathcal{P}_{j_1j_2}(P_1,p_{1_h},P_2,p_{2_h};v,w_1,w_2)\\&
     +\mathcal{Q}_{j_1j_2}(Q_1,q_{1_h},Q_2,q_{2_h};w_1,w_2,z_1,z_2)
     +\mathcal{R}_{j_1j_2}(R_1,r_{1_h},R_2,r_{2_h};z_1,z_2,s_1,s_2)=0,
\end{split}
\end{equation}
where 
\begin{align*}
     \mathcal{U}_{j_1j_2}&(U,u_{h};v,s_1,s_2)=
     \int_{I^1_{j_1}}\int_{I^2_{j_2}}(U-u_h)_tv\,dy\,dx 
     +\int_{I^1_{j_1}}\int_{I^2_{j_2}}(U-u_{h})(s_1)_{x}\,dy\,dx \\&
    - \int_{I_{j_2}^2}\big((U-\hat{u}_h) s^{-}_1\big)_{j_1+\frac{1}{2},y}\,dy
    +\int_{I_{j_2}^2}\big((U-\hat{u}_h) s^{+}_1\big)_{j_1-\frac{1}{2},y}\,dy
    +\int_{I^1_{j_1}}\int_{I^2_{j_2}}(U-u_{h})(s_2)_{y}\,dy\,dx \\&
    - \int_{I_{j_1}^1}\big((U-\hat{u}_h) s^{-}_2\big)_{x,j_2+\frac{1}{2}}\,dx
    +\int_{I_{j_1}^1}\big((U-\hat{u}_h) s^{+}_2\big)_{x,j_2-\frac{1}{2}}\,dx,
\end{align*}
\begin{align*}
         &\mathcal{F}_{j_1j_2}(f_1,f_2;U,u_h;v) =  \int_{I^1_{j_1}}\int_{I^2_{j_2}}\big(f_1(U)-f_1(u_h)\big)v_{x}\,dy\,dx + \int_{I^1_{j_1}}\int_{I^2_{j_2}}\big(f_2(U)-f_2(u_h)\big)v_{y}\,dy\,dx \\&\quad
        -\int_{I^2_{j_2}}\big((f_1(U)-\hat f_1) v^{-}\big)_{j_1+\frac{1}{2},y}\,dy  +\int_{I^2_{j_2}}\big((f_1(U)-\hat f_1)v^{+}\big)_{j_1-\frac{1}{2},y}\,dy \\& \quad- \int_{I^1_{j_1}}\big((f_2(U)-\hat f_2) v^{-}\big)_{x,j_2+\frac{1}{2}}\,dx  
        + \int_{I^1_{j_1}}\big((f_2(U)-\hat f_2) v^{+}\big)_{x,j_2-\frac{1}{2}}\,dx,
\end{align*}

\begin{align*}
         &\mathcal{P}_{j_1j_2}(P_1,p_{1_h},P_2,p_{2_h};v,w_1,w_2) = -\int_{I^1_{j_1}}\int_{I^2_{j_2}}(P_1 -p_{1_h})v_{x}\,dy\,dx 
     -\int_{I^1_{j_1}}\int_{I^2_{j_2}}(P_2 - p_{2_h})v_{y}\,dy\,dx \\&\quad + \int_{I^2_{j_2}}((P_1 - \hat p_{1_h}) v^{-})_{j_1+\frac{1}{2},y}\,dy
     - \int_{I^2_{j_2}}\big((P_1- \hat p_{1_h} )v^{+}\big)_{j_1-\frac{1}{2},y}\,dy + \int_{I^1_{j_1}}\big((P_2-  \hat p_{2_h}) v^{-}\big)_{x,j_2+\frac{1}{2}}\,dx \\&\quad
     - \int_{I^1_{j_1}}\big((P_2-  \hat p_{2_h}) v^{+}\big)_{x,j_2-\frac{1}{2}}\,dx +\int_{I^1_{j_1}}\int_{I^2_{j_2}}(P_1-p_{1_h})w_1\,dy\,dx  +\int_{I^1_{j_1}}\int_{I^2_{j_2}}(P_2-p_{2_h})w_2\,dy\,dx,
\end{align*}

\begin{align*}
     &\mathcal{Q}_{j_1j_2}(Q_1,q_{1_h},Q_2,q_{2_h};w_1,w_2,z_1,z_2) = - \int_{I^1_{j_1}}\int_{I^2_{j_2}}\Delta^1_{\frac{(\alpha_1-2)}{2}}(Q_1-q_{1_h})w_1\,dy\,dx \\& \quad
     - \int_{I^1_{j_1}}\int_{I^2_{j_2}}\Delta^2_{\frac{(\alpha_2-2)}{2}}(Q_2-q_{2_h})w_2\,dy\,dx +\int_{I^1_{j_1}}\int_{I^2_{j_2}}(Q_1-q_{1_h})z_1\,dy\,dx
    +\int_{I^1_{j_1}}\int_{I^2_{j_2}}(Q_2-q_{2_h})z_2\,dy\,dx,
\end{align*}
and
\begin{align*}
    &\mathcal{R}_{j_1j_2}(R_1,r_{1_h},R_2,r_{2_h};z_1,z_2,s_1,s_2) = \int_{I^1_{j_1}}\int_{I^2_{j_2}}(R_1-r_{1_h})(z_1)_{x}\,dy\,dx 
    -\int_{I_{j_2}^2}\big((R_1-\hat{r}_1) z^{-}_1\big)_{j_1+\frac{1}{2},y}\,dy\\&\quad
    +\int_{I_{j_2}^2}\big((R_2-\hat{r}_2) z^{+}_1\big)_{j_1-\frac{1}{2},y}\,dy
    +\int_{I^1_{j_1}}\int_{I^2_{j_2}}(R_2-r_{2_h})(z_2)_{y}\,dy\,dx -\int_{I_{j_1}^1}\big((R_2-\hat{r}_2 )z^{-}_2\big)_{x,j_2+\frac{1}{2}}\,dx\\ & \quad
    +\int_{I_{j_1}^1}\big((R_2-\hat{r}_2) z^{+}_2\big)_{x,j_2-\frac{1}{2}}\,dx
    +\int_{I^1_{j_1}}\int_{I^2_{j_2}}(R_1-r_{1_h})s_1\,dy\,dx
    +\int_{I^1_{j_1}}\int_{I^2_{j_2}}(R_2-r_{2_h})s_2\,dy\,dx,
\end{align*}
for all $v,w_1,w_2,z_1,z_2,s_1,s_2 \in W_2^k$. 
Thus from \eqref{compactformhd23} and \eqref{errequn2}, we obtain the error equation
\begin{align}\label{compactformhd2}
   \nonumber\mathcal B^2(U-u_h&,P_1-p_{1_h},P_2-p_{2_h},Q_1-q_{1_h},Q_2-q_{2_h},R_1-r_{1_h},R_2-r_{2_h};v,w_1,w_2,z_1,z_2,s_1,s_2)\\ \nonumber=&\sum_{j_1=1}^N\sum_{j_2=1}^N \Big[  \mathcal{U}_{j_1j_2}(U,u_{h};v,s_1,s_2)
     +\mathcal{P}_{j_1j_2}(P_1,p_{1_h},P_2,p_{2_h};v,w_1,w_2)\\& \nonumber\qquad \qquad
     +\mathcal{Q}_{j_1j_2}(Q_1,q_{1_h},Q_2,q_{2_h};w_1,w_2,z_1,z_2)
     +\mathcal{R}_{j_1j_2}(R_1,r_{1_h},R_2,r_{2_h};z_1,z_2,s_1,s_2)\Big]\\ =&
     \sum_{j_1=1}^N\sum_{j_2=1}^N\mathcal{F}_{j_1j_2}(f,g;U,u_h;v). 
\end{align}

We choose the test functions $(v,w_1,w_2,z_1,z_2,s_1,s_2) =(\mathbb{P}^-_hu,-\mathbb{P}_hq_1 + \mathbb{P}^+_hp_1+\mathbb{P}_h^+r_1,-\mathbb{P}_hq_2 + \mathbb{P}^+_hp_2+\mathbb{P}_h^+r_2, \mathbb{P}^-_hu+ \mathbb{P}_h^+r_1,\mathbb{P}^-_hu+ \mathbb{P}_h^+r_2 , \mathbb{P}_h^+r_1 - \mathbb{P}^+_hp_1, \mathbb{P}_h^+r_2 - \mathbb{P}^+_hp_2)$ in the equation \eqref{compactformhd2}, also note that $U-u_h = \mathbb{P}^\pm_hu - \mathbb{P}^\pm_eU $ (similarly for $P,Q,R$) and $\mathcal B^2$ is linear, we have the following energy equation
\begin{align}\label{energyinehd2}
       \nonumber \mathcal B^2\big(&\mathbb{P}^\pm_hu,\mathbb{P}^\pm_hp_1,\mathbb{P}^\pm_hp_2,\mathbb{P}^\pm_hq_1,\mathbb{P}^\pm_hq_2,\mathbb{P}^\pm_hr_1,\mathbb{P}^\pm_hr_2;\mathbb{P}^-_hu,-\mathbb{P}_hq_1 + \mathbb{P}^+_hp_1+\mathbb{P}_h^+r_1,-\mathbb{P}_hq_2 + \mathbb{P}^+_hp_2 \\ \nonumber &\qquad \qquad+\mathbb{P}_h^+r_2, \mathbb{P}^-_hu + \mathbb{P}_h^+r_1,\mathbb{P}^-_hu+ \mathbb{P}_h^+r_2 , \mathbb{P}_h^+r_1 - \mathbb{P}^+_hp_1, \mathbb{P}_h^+r_2 - \mathbb{P}^+_hp_2\big)\\ \nonumber=& \mathcal B^2\big(\mathbb{P}^\pm_eU,\mathbb{P}^\pm_eP_1,\mathbb{P}^\pm_eP_2,\mathbb{P}^\pm_eQ_1,\mathbb{P}^\pm_eQ_2,\mathbb{P}^\pm_eR_1,\mathbb{P}^\pm_eR_2;\mathbb{P}^-_hu,-\mathbb{P}_hq_1 + \mathbb{P}^+_hp_1+\mathbb{P}_h^+r_1, \\ \nonumber& \qquad\qquad-\mathbb{P}_hq_2 + \mathbb{P}^+_hp_2+\mathbb{P}_h^+r_2, \mathbb{P}^-_hu+ \mathbb{P}_h^+r_1,\mathbb{P}^-_hu+ \mathbb{P}_h^+r_2 , \mathbb{P}_h^+r_1 - \mathbb{P}^+_hp_1, \mathbb{P}_h^+r_2 - \mathbb{P}^+_hp_2\big) \\\qquad&+\sum_{j_1=1}^N\sum_{j_2=1}^N\mathcal{F}_{j_1j_2}(f,g;U,u_h;\mathbb{P}^-_hu). 
\end{align}

It is now clear that the linear part $\mathcal B^2$ corresponds to $\mathcal B_{\lambda}$, and the nonlinear part $\mathcal{F}_{j_1j_2}$ corresponds to $\mathcal F_i$ for the one-dimensional problem as outlined in the Section \ref{sec3}. The left hand side of above energy equation \eqref{energyinehd2} can be estimated by incorporating the identity \eqref{B2lemmahd} and the right hand side can be estimated by the projection properties \eqref{projectionprophd}-\eqref{Proj_app2d} and assumption \eqref{prioriass2} along with using the similar estimate for the nonlinear term $\mathcal F_i$.  Since the further algebraic manipulations are a straightforward extension from the one-dimensional case to obtain the estimate \eqref{temp_err_1}, we refrain from restating these bounds here. Hence this completes the proof.
\end{proof}
\section{Stability and Convergence analysis of fully-discrete LDG scheme}\label{sec5}
In this section, our aim is to derive a fully discrete LDG scheme for the fractional KdV equation \eqref{fkdv}. 
To do this, we use the Crank-Nicolson method for temporal discretization in \eqref{LDGscheme1} and then define its compact form for stability and convergence analysis.
The stability of the fully discrete scheme trivially follows from the spatial stability analysis. However, to perform error analysis, we need to define a new error equation for the fully discrete LDG scheme, utilizing the projection operators introduced in the previous sections. Deriving this error equation is more complex due to inclusion of average terms in time, as this plays a key role in the derivation to improve the accuracy.

We discretize the temporal domain as follows. Let $\tau$ be a small time step and set \(\{t_n = n\tau\}_{n=0}^{M}\) which is the partition of the given time interval \([0, T]\) with $t_M=T$, and \(u^n = u(t_n)\) and \(u^{n+\frac{1}{2}} = \frac{u^{n+1} + u^n}{2}\). The fully discrete LDG scheme is designed as follows: given \(u_h^n\), determine \(u_h^{n+1}\) such that the following system holds
\begin{equation}\label{LDGscheme1dis}
\begin{split}
     \left(u_h^{n+1},v\right)_{I_i} & = \left(u_h^{n},v\right)_{I_i} +\tau\left(f(u^{n+\frac{1}{2}}_h) + p^{n+1}_h,v^n_x\right)_{I_i} - \tau\left(\hat f_h^{n+\frac{1}{2}} v + \hat p^{n+1}_h v\right)|_{x_{i-\frac{1}{2}}^+}^{x_{i+\frac{1}{2}}^-},\\
              \left(p_h^{n+1},w\right)_{I_i} &= \left(\Delta_{\frac{(\alpha-2)}{2}}q_h^{n+1},w\right)_{I_i},\\
              \left(q_h^{n+1},z\right)_{I_i}&= -\left(r_h^{n+1},z_x\right)_{I_i} + \left(\hat r_h^{n+1} z\right)|_{x_{i-\frac{1}{2}}^+}^{x_{i+\frac{1}{2}}^-},\\
              \left(r_h^{n+1},s\right)_{I_i}&= -\left(u_h^{n+\frac{1}{2}},s_x\right)_{I_i} +\left(\hat u_h^{n+\frac{1}{2}} s\right)|_{x_{i-\frac{1}{2}}^+}^{x_{i+\frac{1}{2}}^-}.
\end{split}
\end{equation}
for all \(v, w, z,s \in V_h^k\) and $n=1,2,\cdots,M-1$ and set initial condition $u^0_h = \mathcal{P}^-U_0$. We use the similar numerical fluxes at interfaces and boundaries as defined for semi-discrete scheme \eqref{LDGscheme1}-\eqref{LF}. To proceed further, we define the compact form of the scheme as in the previous sections 
\begin{align}\label{Compact}
   \nonumber\mathcal{B}^n(&u_h^n,u_h^{n+1},p_h^{n+1},q_h^{n+1},r_h^{n+1}; v,w,z,s)\\& =  \sum\limits_{i=1}^N\Big[\left(\frac{u_h^{n+1}-u_h^n}{\tau}, v\right)  - \left(\left(f(u^{n+\frac{1}{2}}_h) +p_h^{n+1}\right),v_x\right)_{I_i}   \nonumber+\left(p_h^{n+1},w\right)_{I_i} -  \left(\Delta_{\frac{(\alpha-2)}{2}}q_h^{n+1},w\right)_{I_i}
        \\&\quad  + \left(q_h^{n+1},z\right)_{I_i} + \left(r_h^{n+1},z_x\right)_{I_i} 
         +\left(r_h^{n+1},s\right)_{I_i} +\left(u_h^{n+\frac{1}{2}},s_x\right)_{I_i}\Big] + \mathcal{IF}(u_h^{n+\frac{1}{2}},p_h^{n+1},r_h^{n+1};v,z,s),
\end{align}
where $\mathcal{IF}$ is defined by \eqref{Interfacesflux}. Note that the operator $\mathcal{B}^n$ defined above is a discrete analogue in time of $\mathcal{B}$ defined by \eqref{perturbation}.
If $f(U)=0$ in \eqref{Compact}, then we denote the linear part of $\mathcal{B}^n$  by $\mathcal{B}_0^n$.

\begin{lemma}
    The fully discrete scheme \eqref{LDGscheme1dis} is \(L^2\)-stable, and the solution \(u_h^n\) obtained by the scheme \eqref{LDGscheme1dis} satisfies
    \begin{equation}\label{stabl2}
        \|u_h^{n+1}\|_{L^2(\Omega)} \leq C \|u_h^{n}\|_{L^2(\Omega)}, \qquad \forall n.
    \end{equation}
\end{lemma}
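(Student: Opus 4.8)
The plan is to mimic the semi-discrete stability argument of Lemma \ref{stablemma}, replacing the time derivative by the Crank--Nicolson difference quotient and evaluating the test functions at the appropriate time levels. Since $(u_h^n,u_h^{n+1},p_h^{n+1},q_h^{n+1},r_h^{n+1})$ solves the fully discrete scheme, we have $\mathcal{B}^n(u_h^n,u_h^{n+1},p_h^{n+1},q_h^{n+1},r_h^{n+1};v,w,z,s)=0$ for every $(v,w,z,s)\in V_h^k$. Following the choice made in Lemma \ref{stablemma}, I would select
$$(v,w,z,s)=\left(u_h^{n+\frac12},\,-q_h^{n+1}+p_h^{n+1}+r_h^{n+1},\,u_h^{n+\frac12}+r_h^{n+1},\,r_h^{n+1}-p_h^{n+1}\right),$$
so that the auxiliary variables at level $n+1$ and the midpoint value $u_h^{n+\frac12}$ pair exactly as in the semi-discrete computation.

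First I would treat the temporal term. Using $u_h^{n+\frac12}=\frac{u_h^{n+1}+u_h^n}{2}$, the discrete energy identity
$$\left(\frac{u_h^{n+1}-u_h^n}{\tau},u_h^{n+\frac12}\right)_{L^2(\Omega)}=\frac{1}{2\tau}\left(\norm{u_h^{n+1}}^2_{L^2(\Omega)}-\norm{u_h^n}^2_{L^2(\Omega)}\right)$$
serves as the Crank--Nicolson analogue of $\tfrac12\frac{d}{dt}\norm{u_h}^2_{L^2(\Omega)}$. All remaining spatial terms are structurally identical to those in \eqref{B_nlinear}: the integration-by-parts identities for $(p_h^{n+1},(u_h^{n+\frac12})_x)+(u_h^{n+\frac12},(p_h^{n+1})_x)$ and $(r_h^{n+1},(u_h^{n+\frac12})_x)+(u_h^{n+\frac12},(r_h^{n+1})_x)$ reduce to interface contributions, and the same cancellations collapse the interface flux $\mathcal{IF}$ as in the computation of $\mathcal{E}_1$. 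Consequently the fractional term $(\Delta_{\frac{(\alpha-2)}{2}}q_h^{n+1},q_h^{n+1})$ is nonnegative by Lemma \ref{Inner_frac}, the monotone-flux inequality $\llbracket F(u_h^{n+\frac12})\rrbracket_{i+\frac12}-(\hat f_h^{n+\frac12})_{i+\frac12}\llbracket u_h^{n+\frac12}\rrbracket_{i+\frac12}>0$ holds verbatim, and the boundary conditions \eqref{Boundaryfluxes2} annihilate the remaining boundary terms, leaving only the dissipative squares $\frac12\big((r_h^{n+1})^+_{\frac12}\big)^2$ and $\frac12\sum\llbracket r_h^{n+1}\rrbracket^2$.

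After discarding these nonnegative quantities and invoking Young's inequality together with Lemma \ref{kilbas}, Lemma \ref{fpfred} and Proposition \ref{fracemb} exactly as in \eqref{perturbation5}--\eqref{perturbation6}, I would arrive at the discrete inequality
$$\frac{1}{2\tau}\left(\norm{u_h^{n+1}}^2_{L^2(\Omega)}-\norm{u_h^n}^2_{L^2(\Omega)}\right)\leq C\norm{u_h^{n+\frac12}}^2_{L^2(\Omega)}.$$
The final step is a discrete Gronwall argument: bounding $\norm{u_h^{n+\frac12}}^2_{L^2(\Omega)}\leq\frac12\big(\norm{u_h^{n+1}}^2_{L^2(\Omega)}+\norm{u_h^n}^2_{L^2(\Omega)}\big)$ and rearranging gives $(1-C\tau)\norm{u_h^{n+1}}^2_{L^2(\Omega)}\leq(1+C\tau)\norm{u_h^n}^2_{L^2(\Omega)}$, so for $\tau$ small enough the claimed bound \eqref{stabl2} follows with constant $\sqrt{(1+C\tau)/(1-C\tau)}$.

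The argument is essentially a transcription of Lemma \ref{stablemma}, and the only genuinely new ingredient is the discrete energy identity for the Crank--Nicolson quotient, so I do not expect a serious obstacle. The one point requiring a little care is verifying that the midpoint value $u_h^{n+\frac12}$ (rather than $u_h^{n+1}$) appearing in the convective and $(u,s_x)$ terms is exactly the quantity that pairs with the chosen test functions; this is what makes the nonlinear flux $\hat f_h^{n+\frac12}$ and the antiderivative $F(u_h^{n+\frac12})$ combine into a nonnegative interface contribution, thereby making the time-averaging consistent with the spatial stability mechanism.
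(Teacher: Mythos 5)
Your proposal is correct and follows essentially the same route as the paper: the paper's proof uses exactly the same test-function choice $(u_h^{n+\frac12},\,-q_h^{n+1}+p_h^{n+1}+r_h^{n+1},\,u_h^{n+\frac12}+r_h^{n+1},\,r_h^{n+1}-p_h^{n+1})$, invokes the semi-discrete estimates \eqref{B_nlinear}--\eqref{perturbation6} to reduce to $\left(\frac{u_h^{n+1}-u_h^n}{\tau},u_h^{n+\frac12}\right)\leq C\norm{u_h^{n+\frac12}}^2$, and concludes by Young's inequality with $\tau$ small enough that $1-C\tau>\frac12$. Your write-up merely makes explicit the Crank--Nicolson energy identity and the rearrangement to $(1-C\tau)\norm{u_h^{n+1}}^2\leq(1+C\tau)\norm{u_h^n}^2$, which the paper leaves implicit.
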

\begin{proof}
Since $(u_h^{n+1}, p_h^{n+1}, q_h^{n+1}, r_h^{n+1})$ is a solution of scheme \eqref{LDGscheme1dis}, then $$\mathcal{B}^n(u^n_h,u^{n+1}_h, p^{n+1}_h, q^{n+1}_h, r^{n+1}_h; v, w, z, s)=0 \text{ for any }(v, w, z, s) \in V_h^k.$$

 We choose test functions \((v, w, z,s) = (u_h^{n+\frac{1}{2}}, -q_h^{n+1}+p_h^{n+1}+r_h^{n+1}, u_h^{n+\frac{1}{2}}+r_h^{n+1}, r_h^{n+1}-p_h^{n+1})\) in the above equation, then estimates \eqref{B_nlinear}-\eqref{perturbation6} imply
    \begin{align*}
        \left(\frac{u_h^{n+1} - u_h^n}{\tau}, u^{n+\frac{1}{2}}\right) \leq C\norm{ u_h^{n+\frac{1}{2}}}^2.
    \end{align*}
Again using the Young's inequality and taking small $\tau$ such that $1-C\tau>\frac{1}{2}$, we have 
\begin{equation*}
        \|u_h^{n+1}\|_{L^2(\Omega)} \leq C \|u_h^{n}\|_{L^2(\Omega)}, \qquad \forall n.
    \end{equation*}
    This completes the proof.
\end{proof}

In order to obtain the error estimates for fully discrete LDG scheme \eqref{LDGscheme1dis}, we define error terms at $n-$th time step incorporating the projections defined in \eqref{projectionprop}. Let $U$ be an exact solution of \eqref{fkdv} and $u_h^n$ is an approximate solution of $U$ at $t_n$ in $V_h^k$, then we define
\begin{equation}\label{unerror}
    u_h^{n} - U(t_{n}) = \mathcal{P}_e^-U(t_n) - \mathcal{P}_h^-u^n,
\end{equation}
where projection $\mathcal{P}_h^- u^n = \mathcal{P}^- U(t_n) - u_h^n$ and $\mathcal{P}_e^- U(t_n) = \mathcal{P}^- U(t_n) - U(t_n)$. Similar expressions can be defined for $p_h^n - P(t_n)$, $q_h^{n} - Q(t_{n})$ and $r_h^{n} - R(t_{n})$. Now we present the error term at the average time $t_{n+\frac{1}{2}}$ as follows
\begin{align}\label{un2error}
 u_h^{n+\frac{1}{2}} - U(t_{n+\frac{1}{2}}) =  (\mathcal{P}_e^-U)_{n+\frac{1}{2}} - (\mathcal{P}_h^-u)_{n+\frac{1}{2}} +\theta_{n+\frac{1}{2}},
\end{align}
where
\begin{align*}
         (\mathcal{P}_e^-U)_{n+\frac{1}{2}} := \frac{1}{2}\big(\mathcal{P}_e^-U(t_{n+1})+\mathcal{P}_e^-U(t_{n})\big), \quad (\mathcal{P}_h^-u)_{n+\frac{1}{2}} :=  \frac{1}{2}\big(\mathcal{P}_h^-u^{n+1}+\mathcal{P}_h^-u^n\big)
\end{align*}
and 
\begin{equation*}
     \theta_{n+\frac{1}{2}} :=  \frac{U(t_{n})+U(t_{n+1})}{2}-U(t_{n+\frac{1}{2}}), \quad \zeta_{n+\frac{1}{2}} :=  \frac{U(t_{n+1})-U(t_{n})}{\tau}-U_t(t_{n}).
\end{equation*}
We employ the Taylor’s formula with an integral remainder to derive the following \(L^2\)-norm estimate for \(\theta_{n+\frac{1}{2}}\):
\begin{equation}\label{estrho}
    \norm{\theta_{n+\frac{1}{2}}}_{L^2(\Omega)}^2 \leq C\tau^3 \int_{t_n}^{t_{n+1}} \norm{U_{tt}(\xi)}_{L^2(\Omega)}^2 \,d\xi,\quad \norm{\zeta_{n+\frac{1}{2}}}_{L^2(\Omega)}^2 \leq C\tau^3 \int_{t_n}^{t_{n+1}} \norm{U_{ttt}(\xi)}_{L^2(\Omega)}^2 \,d\xi.
\end{equation}

We present and prove the following error estimate, building on the analysis conducted for the semi-discrete case. Here, we focus on the error analysis for a general nonlinear flux \( f \) and achieve a sub-optimal rate. For linear flux, an optimal rate can be obtained using a similar approach as in the semi-discrete case (see Lemma \ref{Lemmalin_err}). We omit the detailed calculations as they follow directly from the results presented in previous section.

\begin{theorem}\label{errorthm}
    Let \( U \) be the exact solution of the fractional KdV equation \eqref{fkdv}, which is sufficiently smooth. Let \( u_h^n \) be the approximations of \( U \) obtained by the fully discrete LDG scheme \eqref{LDGscheme1dis} at time \( t_n \). For sufficiently small \( h \) and \( \tau \), the following convergence rate holds:
    \begin{equation}\label{rate}
        \norm{U(t_n) - u_h^n}_{L^2(\Omega)} = \mathcal{O}(h^{k+\frac{1}{2}} + \tau^2), \qquad n=0,1,\cdots,M,
    \end{equation}
    where the constant \( C \) is independent of \( \tau \) and \( h \).
\end{theorem}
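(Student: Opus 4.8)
The plan is to reproduce the nonlinear semi-discrete argument of Theorem \ref{NLerror} at the discrete level, using the compact form $\mathcal{B}^n$ of \eqref{Compact} in place of $\mathcal{B}$ and absorbing the Crank--Nicolson consistency error through the bounds \eqref{estrho} on $\theta_{n+\frac{1}{2}}$ and $\zeta_{n+\frac{1}{2}}$. First I would build the fully discrete error equation. Since the numerical solution satisfies $\mathcal{B}^n(u_h^n,u_h^{n+1},p_h^{n+1},q_h^{n+1},r_h^{n+1};v,w,z,s)=0$, while inserting the exact solution into $\mathcal{B}^n$ leaves a time-consistency residual, I would rewrite the difference quotient $\frac{U(t_{n+1})-U(t_n)}{\tau}$ and the midpoint evaluations in terms of $\theta_{n+\frac{1}{2}}$ and $\zeta_{n+\frac{1}{2}}$ so that this residual is expressed solely through those two quantities. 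Splitting $\mathcal{B}^n=\mathcal{B}_0^n+(\text{nonlinear flux part})$ and using the representations \eqref{unerror}--\eqref{un2error} of each error into a projection part ($\mathcal{P}_e$) and a discrete part ($\mathcal{P}_h$), this produces an identity in which $\mathcal{B}_0^n$ acting on the $\mathcal{P}_h$-errors equals $\mathcal{B}_0^n$ acting on the $\mathcal{P}_e$-errors, plus the nonlinear contribution $\sum_i\mathcal{F}_i(f;U,u_h^{n+\frac{1}{2}};\cdot)$, plus the truncation terms in $\theta_{n+\frac{1}{2}},\zeta_{n+\frac{1}{2}}$.

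Next I would choose the half-step analogue of the semi-discrete test functions, $(v,w,z,s)=\big((\mathcal{P}_h^-u)_{n+\frac{1}{2}},\,-\mathcal{P}_hq^{n+1}+\mathcal{P}_h^+p^{n+1}+\mathcal{P}_h^+r^{n+1},\,(\mathcal{P}_h^-u)_{n+\frac{1}{2}}+\mathcal{P}_h^+r^{n+1},\,\mathcal{P}_h^+r^{n+1}-\mathcal{P}_h^+p^{n+1}\big)$. With this choice the discrete counterpart of the algebraic identity \eqref{B_nlinear} reproduces the nonnegative quantities $\norm{\mathcal{P}_h^+p^{n+1}}_{L^2(\Omega)}^2$, $\norm{\mathcal{P}_h^+r^{n+1}}_{L^2(\Omega)}^2$, the coercive fractional term $(\Delta_{\frac{(\alpha-2)}{2}}\mathcal{P}_hq^{n+1},\mathcal{P}_hq^{n+1})\geq 0$ by Lemma \ref{Inner_frac}, and the jump and boundary terms, together with the telescoping leading term
\[
\left(\frac{\mathcal{P}_h^-u^{n+1}-\mathcal{P}_h^-u^n}{\tau},(\mathcal{P}_h^-u)_{n+\frac{1}{2}}\right)=\frac{1}{2\tau}\left(\norm{\mathcal{P}_h^-u^{n+1}}_{L^2(\Omega)}^2-\norm{\mathcal{P}_h^-u^n}_{L^2(\Omega)}^2\right).
\]
The $\mathcal{P}_e$-side is bounded by $Ch^{2k+1}$ using the interpolation estimate \eqref{interpest} and Lemma \ref{Negfracapprox}; the nonlinear flux term is controlled by Lemma \ref{Shulemma2} and Lemma \ref{Shulemma1} as in the semi-discrete proof, invoking a fully discrete version of the a priori assumption \eqref{prioriass} (justified by a continuation/bootstrap argument over the time index) together with Corollary \ref{infcoro}; and the consistency terms are dominated by $\epsilon\norm{(\mathcal{P}_h^-u)_{n+\frac{1}{2}}}_{L^2(\Omega)}^2+C(\norm{\theta_{n+\frac{1}{2}}}_{L^2(\Omega)}^2+\norm{\zeta_{n+\frac{1}{2}}}_{L^2(\Omega)}^2)$ via Young's inequality.

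Collecting these estimates yields the discrete energy inequality
\[
\norm{\mathcal{P}_h^-u^{n+1}}_{L^2(\Omega)}^2-\norm{\mathcal{P}_h^-u^n}_{L^2(\Omega)}^2\leq C\tau\norm{(\mathcal{P}_h^-u)_{n+\frac{1}{2}}}_{L^2(\Omega)}^2+C\tau h^{2k+1}+C\tau\big(\norm{\theta_{n+\frac{1}{2}}}_{L^2(\Omega)}^2+\norm{\zeta_{n+\frac{1}{2}}}_{L^2(\Omega)}^2\big).
\]
Summing over $n=0,\ldots,m-1$ and using $\mathcal{P}_h^-u^0=0$ (since $u_h^0=\mathcal{P}^-U_0$), the identity $\tau\sum_{n}h^{2k+1}=Th^{2k+1}$, and $\tau\sum_n(\norm{\theta_{n+\frac{1}{2}}}_{L^2(\Omega)}^2+\norm{\zeta_{n+\frac{1}{2}}}_{L^2(\Omega)}^2)\leq C\tau^4$ by \eqref{estrho}, a discrete Gronwall inequality gives $\norm{\mathcal{P}_h^-u^m}_{L^2(\Omega)}\leq C(h^{k+\frac{1}{2}}+\tau^2)$. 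The triangle inequality together with $\norm{\mathcal{P}_e^-U(t_m)}_{L^2(\Omega)}\leq Ch^{k+1}$ then produces the claimed rate \eqref{rate}.

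The main obstacle I anticipate is the first step: arranging the fully discrete error equation so that the Crank--Nicolson residual is isolated cleanly into $\theta_{n+\frac{1}{2}}$ and $\zeta_{n+\frac{1}{2}}$ while the spatial structure stays identical to \eqref{B_nlinear}. The scheme \eqref{LDGscheme1dis} evaluates the auxiliary variables $p,q,r$ at $t_{n+1}$ but the solution and flux at the half step $t_{n+\frac{1}{2}}$, and these mismatched time levels must be reconciled; in particular the nonlinear term $f(u_h^{n+\frac{1}{2}})$ must be reorganised against $f(U(t_{n+\frac{1}{2}}))$ so that Lemma \ref{Shulemma2} applies at the midpoint. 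The delicate bookkeeping is to ensure the accumulated truncation is genuinely $O(\tau^2)$: each $\norm{\theta_{n+\frac{1}{2}}}_{L^2(\Omega)}^2$ and $\norm{\zeta_{n+\frac{1}{2}}}_{L^2(\Omega)}^2$ is $O(\tau^4)$ pointwise by \eqref{estrho}, and only the factor $\tau$ from $\tau\sum_n$ combined with the $\int_{t_n}^{t_{n+1}}$ integrals in \eqref{estrho} telescoping to $\int_0^T$ keeps the final temporal order at $\tau^2$ rather than $\tau^{3/2}$.
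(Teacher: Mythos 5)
Your proposal is correct and follows essentially the same route as the paper's own proof: the same error equation built from the compact forms $\mathcal{B}^n$ and $\mathcal{B}^n_0$, the same half-step test functions, the same treatment of the nonlinear flux via Lemmas \ref{Shulemma1}--\ref{Shulemma2} and the \emph{a priori} assumption, the same isolation of the Crank--Nicolson residual through $\theta_{n+\frac{1}{2}}$, $\zeta_{n+\frac{1}{2}}$ and \eqref{estrho}, and the same conclusion via telescoping, Gronwall-type iteration, $\mathcal{P}_h^-u^0=0$, and the triangle inequality. The only cosmetic difference is that you sum the energy inequality and apply a discrete Gronwall lemma, whereas the paper iterates the multiplicative recursion $\norm{\mathcal{P}_h^-u^{n+1}}^2 \leq \frac{1+C\tau}{1-C\tau}\norm{\mathcal{P}_h^-u^{n}}^2 + \cdots$ directly; these are equivalent.
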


\begin{proof}
Since \(u_h^n\) satisfies the scheme \eqref{LDGscheme1dis} and \(U\) is an smooth solution of \eqref{fkdv}, using Taylor's formula and \eqref{estrho}, we have
\begin{equation}\label{errtrm}
    \mathcal{B}^n(U(t_n),U(t_{n+\frac{1}{2}}),P(t_{n+1}),Q(t_{n+1}),R(t_{n+1});v,w,z,s) \leq C\tau^3 \int_{t_n}^{t_{n+1}} \norm{U_{ttt}(\xi)}_{L^2(\Omega)}^2 \,d\xi, \\
    \end{equation}
    and
 \begin{equation}\label{errtrm2}
    \mathcal{B}^n(u_h^n,u_h^{n+\frac{1}{2}},p_h^{n+1},q_h^{n+1},r_h^{n+1};v,w,z,s) =0.
\end{equation}
Hence, using bi-linearity of $\mathcal{B}_0^n$, and subtracting \eqref{errtrm} from \eqref{errtrm2}, also using \eqref{Nonlinearpart}, we have the following error equation
\begin{align}
    \nonumber \mathcal{O}(\tau^3)& = \mathcal{B}^n_0\big(u_h^n-U(t_n),u_h^{n+\frac{1}{2}}-U(t_{n+\frac{1}{2}}),p_h^{n+1}-P(t_{n+1}),q_h^{n+1}-Q(t_{n+1}),r_h^{n+1}-R(t_{n+1}); v,w,z,s\big)\\ \nonumber&\qquad + \sum\limits_{i=1}^N \mathcal{F}_i(f;U(t_{n+\frac{1}{2}}),u_h^{n+\frac{1}{2}},v)
\end{align}
which further implies using \eqref{unerror} and \eqref{un2error}
\begin{align}\label{erreqn}
  \nonumber\mathcal{B}^n_0(&\mathcal{P}_h^-u^n,(\mathcal{P}_h^-u)_{n+\frac{1}{2}},\mathcal{P}^+_hp^{n+1}, \mathcal{P}_hq^{n+1},\mathcal{P}^+_hr^{n+1}; v,w,z,s)-\mathcal{O}(\tau^3) =  \mathcal{B}^n_0\big(\mathcal{P}_e^-U(t_n),(\mathcal{P}_e^-U)_{n+\frac{1}{2}} +\theta_{n+\frac{1}{2}},\\&\qquad\mathcal{P}^+_eP(t_{n+1}), \mathcal{P}_eQ(t_{n+1}),\mathcal{P}^+_eR(t_{n+1}); v,w,z,s\big)+\sum\limits_{i=1}^N \mathcal{F}_i(f;U(t_{n+\frac{1}{2}}),u_h^{n+\frac{1}{2}},v).
\end{align}
We follow the approach of semi-discrete analysis \eqref{nonlinerr}-\eqref{nonlin2err} by choosing the appropriate test functions  $$(v,w,z,s) = \big((\mathcal{P}_h^-u)_{n+\frac{1}{2}}, -\mathcal{P}_h q^n+\mathcal{P}^+_h p^n +\mathcal{P}^+_h r^n, (\mathcal{P}_h^-u)_{n+\frac{1}{2}} + \mathcal{P}^+_h r^n,\mathcal{P}^+_h r^n -   \mathcal{P}^+_h p^n \big)$$ in \eqref{erreqn} so that \eqref{erreqn} becomes

\begin{align}\label{est1help}
    \nonumber\Big(\frac{\mathcal{P}_h^-u^{n+1}-\mathcal{P}_h^-u^n}{\tau}, (\mathcal{P}_h^-u)_{n+\frac{1}{2}}\Big) \nonumber
    &\leq \Big(\frac{\mathcal{P}_e^-U(t_{n+1})-\mathcal{P}_e^-U(t_n)}{\tau} + \theta_{N+\frac{1}{2}},(\mathcal{P}_h^-u)_{n+\frac{1}{2}}\Big) \\ &\qquad+C\norm{(\mathcal{P}_h^-u)_{n+\frac{1}{2}}}_{L^2(\Omega)}^2 + C(\Omega)h^{2k+1}+ C\tau^3.
\end{align}
We incorporate the Young's inequality, interpolation estimate \eqref{interpest} and estimate \eqref{estrho}, and also we have taken sufficiently small $\tau$ such that $1-C\tau>\frac{1}{2}$ to get
\begin{align}\label{est3help}
   \nonumber\norm{\mathcal{P}_h^-u^{n+1}}_{L^2(\Omega)}^2 &\leq  C(\Omega)\tau h^{2k+1}+ \frac{1+C\tau}{1-C\tau}\norm{\mathcal{P}_h^-u^{n}}_{L^2(\Omega)}^2  + C \tau^4 \int_{t_n}^{t_{n+1}} \norm{U_{tt}(\xi)}^2_{L^2(\Omega)}\,d\xi +C\tau^4 \\ \nonumber
   &\leq   C(\Omega)n\tau h^{2k+1}+  \left(\frac{1+C\tau}{1-C\tau}\right)^n\norm{\mathcal{P}_h^-u^{0}}_{L^2(\Omega)}^2 \\ \nonumber&\qquad + C \tau^4 \sum\limits_{k=0}^{n-1} \left(\frac{1+C\tau}{1-C\tau}\right)^{n-k} \int_{t_k}^{t_{k+1}} \norm{U_{tt}(\xi)}^2_{L^2(\Omega)}\,d\xi\\
   &\leq C(\Omega)T h^{2k+1}+ e^{4CT}\norm{\mathcal{P}_h^-u^{0}}_{L^2(\Omega)}^2  +e^{4CT} \tau^4  \int_{0}^{T} \norm{U_{tt}(\xi)}^2_{L^2(\Omega)}\,d\xi,
\end{align}
Since $U$ is smooth and $\mathcal{P}_h^-u^{0} = 0$, the estimate \eqref{est3help} becomes
\begin{equation*}
    \norm{\mathcal{P}_h^-u^{n+1}}_{L^2(\Omega)} \leq C(U_0,T) (h^{k+\frac{1}{2}}+\tau^2).
\end{equation*}
This completes the proof.

\end{proof}

 \begin{remark}
  We would like to remark that higher order time-stepping method can also be considered for the fully discrete LDG scheme. For instance, fourth order Runge-Kutta (RK) time discretization can be employed. The semi-discrete scheme \eqref{LDGscheme1} corresponds to an ODE system
         \begin{equation}\label{ddtldg}
          \frac{d}{dt}u_h = L_h u_h.
         \end{equation}
where $L_h$ is the LDG operator of the semi-discrete scheme \eqref{LDGscheme1}. Since the developed LDG scheme \eqref{LDGscheme1} is higher order accurate in space and achieved accuracy of order $k+1$, to achieve the better accuracy in time as well one needs to discretize time using the higher order scheme such as Runge-Kutta method.
With this, we consider the standard four stage explicit fourth order RK-LDG scheme which can be presented as follows:
\begin{align}\label{RK4scheme}
    u^{n+1} = P_4(\tau L_h)u^n,
\end{align}
where the operator $P_4$ is defined by
\begin{align*}
    P_4( \tau L_h) = I+\tau L_h +\frac{1}{2}(\tau L_h)^2 +\frac{1}{6}(\tau L_h)^3 + \frac{1}{24}(\tau L_h)^4.
\end{align*}
The reason behind introducing such higher order scheme is to verify our theoretical findings for higher degree polynomials as approximations in the numerical section. However, stability and convergence analysis for the fourth order Runge-Kutta LDG scheme has not been developed yet for the higher order equations. We refer to \cite{dwivedi2024local}, where the strong stability for fourth order Runge-Kutta LDG scheme is obtained for the Benjamin-Ono equation by considering the linear flux.
\end{remark}

\section{Numerical Examples}\label{sec6}
In this section, our aim is to validate the proposed LDG scheme \eqref{LDGscheme1} for solving the fractional KdV equation \eqref{fkdv} with $\alpha$ ranging between $1$ and $2$. For the numerical computation of the fractional integral $\Delta_{\frac{\alpha-2}{2}}$, we redefine ${}_{a}I_x^{2-\alpha}$ and ${}_{x}I_b^{2-\alpha}$ in the interval $\Omega$ by applying the linear transformations $t\mapsto \left(\frac{x+a}{2}+\frac{x-a}{2}\xi\right)$ and $t\mapsto \left(\frac{b+x}{2}+\frac{b-x}{2}\xi\right)$ to the definitions \eqref{leftfrac} and \eqref{rightfrac}, respectively. This results in:

\begin{align*}
    {}_{a}I_x^{2-\alpha} u(x)& =  \frac{1}{\Gamma({2-\alpha})}\left(\frac{x-a}{2}\right)^{2-\alpha}\int_{-1}^1(1-\xi)^{1-\alpha}u\left(\frac{x+a}{2}+\frac{x-a}{2}\xi\right)\,d\xi,\\
    {}_{x}I_b^{2-\alpha} u(x)& = \frac{1}{\Gamma({2-\alpha})}\left(\frac{b-x}{2}\right)^{2-\alpha}\int^{1}_{-1}(1+\xi)^{1-\alpha}u\left(\frac{b+x}{2}+\frac{b-x}{2}\xi\right)\,d\xi.
\end{align*}
With this setup, we can efficiently compute the fractional integral using the Gauss–Jacobi quadrature with weight functions $(1-\xi)^{1-\alpha}$ and $(1+\xi)^{1-\alpha}$, where the condition $1-\alpha>-1$ is satisfied, as required. For more details and algorithm, one may refer to \cite[Appendix A]{hesthaven2007nodal} and \cite{qiu2015nodal}. The order of convergence is defined for each intermediate step between element numbers $N_1$ and $N_2$ as
\begin{equation*}
    R_E = \frac{\ln(E(N_1))-\ln(E(N_2))}{\ln(N_2)-\ln(N_1)},
\end{equation*}
where the error $E$ can be seen as a function of number of elements $N$.

\subsection{Crank-Nicolson LDG Scheme}
We validate the proposed fully discrete Crank-Nicolson (CN) LDG scheme \eqref{LDGscheme1dis} through an illustration. The one soliton solution for KdV equation $U_t+\left(\frac{U^2}{2}\right)_x +U_{xxx}=0$ is given by \cite{dutta2015convergence,dwivedi2023stability}:
\begin{equation}\label{onesol}
     U(x,t) = 9\left(1-\tanh^2\left(\sqrt{\frac{3}{2}}(x-3t)\right)\right).
\end{equation}
The solution $U$ represents a single soliton wave moving with speed $3$ towards the right side. We verified our CN-LDG scheme by setting the initial data $u^0_h = U(x,-1)$. The numerical solution of fractional KdV \eqref{fkdv} with $\alpha=1.999$ is calculated on the uniform grid with time step $\tau=0.5h$ at time $T=2$ in the space interval $[-15,15]$. The Table \ref{tab:error_tableKDVCN} represents the expected optimal rate of convergence.
\begin{table}[htbp]
    \centering
    \begin{tabular}{|c|c|c|}
       \hline
 $N$ & $E$& $R_E$ \\
 \hline
 \hline 
 320  & 1.41e-01&\\
  &  & 1.97\\
 640 & 3.62e-02  &\\
   &  & 1.98 \\
 1280  & 9.16e-03 &\\
  &  & 2.00\\
 2560  & 2.29e-04 &\\
 \hline
    \end{tabular}
    \caption{Error and order of convergence for the fractional KdV equation \eqref{fkdv} with $\alpha=1.999$ taking $N$ elements and polynomial degree $k=1$ using the CN-LDG scheme \eqref{LDGscheme1dis}.}
    \label{tab:error_tableKDVCN}
\end{table}

\subsection{Higher Order LDG scheme}
To achieve the better accuracy in time, we perform the numerical experiments using the higher order time discretization method such as fourth order Runge-Kutta LDG scheme \eqref{RK4scheme}. 
Our focus lies on verifying the performance of scheme using a low storage explicit Runge-Kutta (LSERK) of fourth order \cite{carpenter1994fourth,hesthaven2007nodal} time discretization method of the form
\begin{align*}
    &r^{(0)} = u_h^n,\\
    &\text{for } j =1:5\\
    & \qquad \begin{cases}
        k^j = a_j k^{j-1} +\Delta t L_hr^{(j-1)},\\
        r^{(j)} = r^{(j-1)}+b_j k^{j},
    \end{cases}\\
    &u_h^{n+1} = r^{(5)},
\end{align*}
where the weighted coefficients $a_j$, $b_j$ and $c_j$ of the LSERK method given in \cite{hesthaven2007nodal}. The above iteration in competitive with the classical fourth-order method \eqref{RK4scheme} is considerably more efficient and accurate than \eqref{RK4scheme}, as it has the disadvantage that it requires four extra storage arrays.
\subsubsection{Compare with classical KdV}
In this numerical example, we explore the behavior of the LDG scheme \eqref{LDGscheme1}-\eqref{Boundaryfluxes2} for the fractional KdV equation \eqref{fkdv} by selecting the exponent $\alpha$ close to $2$. The flux function is considered as $f(U) = -3U^2$ and the exact solution of the KdV equation $U_t-3(U^2)_x+U_{xxx}=0$ is used in \cite{yan2002local}:
\begin{equation}\label{exactsolkdv}
    U(x,t) = -2 \sech^2(x-4t), \qquad x\in[-10,12],\quad t\in [0,1].
\end{equation}
Since for $\alpha = 2$ the equation \eqref{fkdv} corresponds to the generalized KdV equation, we have shown that the approximate solution obtained by the LDG scheme \eqref{LDGscheme1}-\eqref{Boundaryfluxes2} converges to the exact solution of the generalized KdV equation when $\alpha$ is chosen close to $2$.

We initialize our simulation with the initial condition at $t=0$ given by 
\begin{equation}\label{u0alpha2}
    U_0(x)= U(x,0) = -2 \sech^2(x), \qquad x\in[-10,12],
\end{equation}
and compare the results at final time $T=1$.
\begin{table}[htbp]
    \centering
    \begin{tabular}{|c|c|c|c|c|c|c|}
    \hline
        \multicolumn{7}{|c|}{$\alpha=1.999$} \\
        \hline
        \multirow{2}{*}{\textbf{$N/k$}} & \multicolumn{2}{c|}{\textbf{$k=1$}} & \multicolumn{2}{c|}{\textbf{$k=2$}} & \multicolumn{2}{c|}{\textbf{$k=3$}} \\
        \cline{2-7}
        & \textbf{$E$} & \textbf{$R_E$} & \textbf{$E$} & \textbf{$R_E$} & \textbf{$E$} & \textbf{$R_E$} \\
        \hline
          \textbf{$N=40$} & 2.02e-01 &  & 7.07e-02 &  & 1.13e-03& \\  
        &  & 3.30 &  & 2.89 & & 3.98\\  
         \textbf{$N=80$}& 2.04e-02 &  & 9.52e-03 &  &7.21e-05 & \\
        & & 2.55 &  & 2.95 & & 3.98  \\
         \textbf{$N=160$} & 3.46e-03 &  & 1.22e-03 &  & 4.57e-06& \\  
        &  & 2.28 &  & 2.98 & & 3.91 \\  
         \textbf{$N=320$}& 7.09e-04 &  & 1.54e-04 &  & 3.04e-07 & \\
        \hline
    \end{tabular}
    \caption{Error and order of convergence for fractional KdV equation \eqref{fkdv} with $\alpha$ close to $2$ taking $N$ elements and polynomial degree $k$.}
    \label{tab:error_tableKDV}
\end{table}

\begin{figure}
    \centering
    \includegraphics[width=0.9 \linewidth, height=8cm]{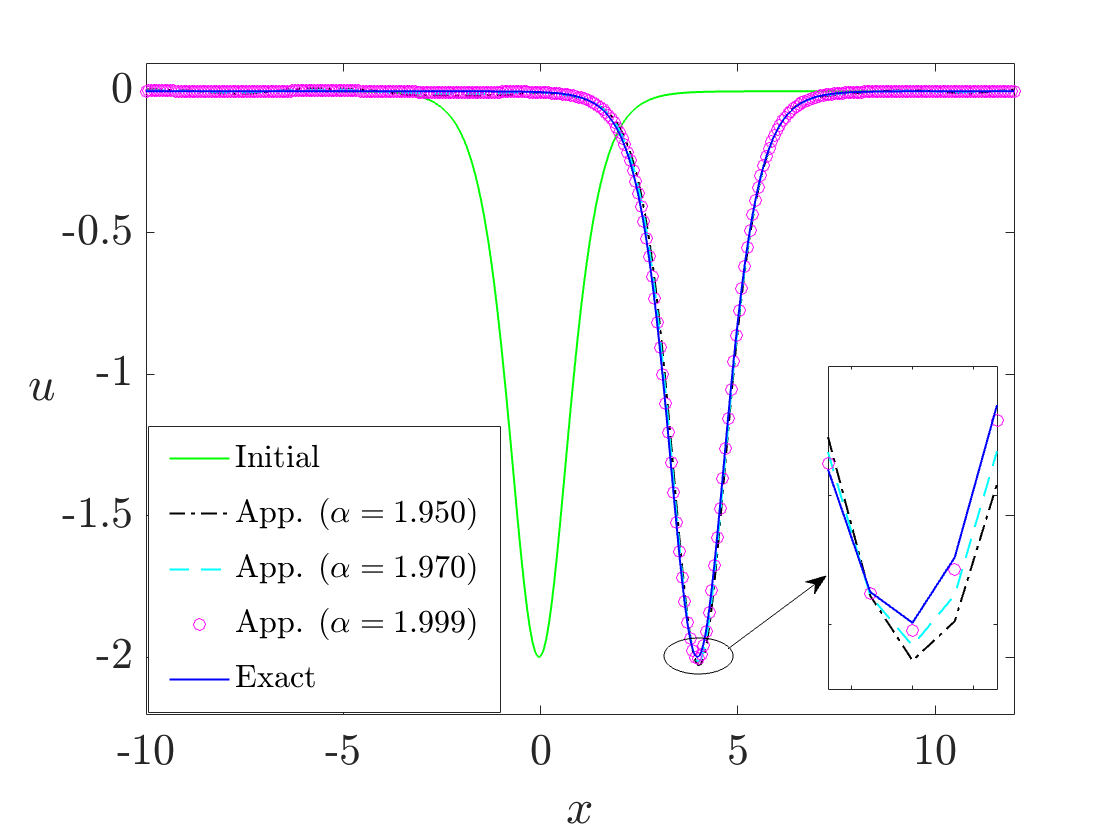}
    \caption{The exact solutions and approximate solution of \eqref{fkdv} at $T=1$ with $N=80$, $k=3$ and fractional exponent $\alpha=1.950$, 1.970 and 1.999.}
    \label{fig:fkdv_k3}
\end{figure}
This numerical experiment allows us to verify theoretical results obtained in previous sections for the LDG scheme \eqref{LDGscheme1}-\eqref{Boundaryfluxes2} and it also captures the dynamics of the KdV equation for fractional values of $\alpha$ close to $2$ for the initial condition $U_0$ defined in \eqref{u0alpha2}. The Table \ref{tab:error_tableKDV} represents the error rates which are optimal even for coarser grids, provided the CFL condition number is sufficiently small. Its graphical representation is depicted in Figure \ref{fig:fkdv_k3} with taking number of grid points $N=160$ and degree of polynomial $k=3$. From the Figure \ref{fig:fkdv_k3}, it is evident that as $\alpha$ tends to $2$, specifically, for $\alpha = 1.900, ~1.950, ~1.999$, approximate solutions converge to the exact solution \eqref{exactsolkdv}. We choose the polynomials of degree $k=1,2,3$, and it is observed that for higher degree polynomials, approximate solution converges more accurately. 

\subsubsection{Fractional Case}
For the fractional value of $\alpha$, we consider the following linear fractional KdV equation:
\begin{equation}\label{examfrac}
    \begin{cases}
        U_t(x,t)-(-\Delta)^{\alpha/2}U_x(x,t) = g(x,t), \qquad &\text{ in } [0,1]\times(0,0.01],\\
        U(x,0) = U_{0g}(x), \qquad &\text{ on } [0,1],
    \end{cases}
\end{equation}
with the initial condition $U_{0g}(x) = x^6(1-x)^6$. Here $g(x,t)$ is an additional source term, and we choose 
\begin{equation*}
    g(x,t) = e^{-t}\left(-U_{0g}(x)-(-\Delta)^{\alpha/2}(U_{0g})_x(x)\right),
\end{equation*}
to obtain the exact solution $U(x,t) = e^{-t}x^6(1-x)^6$. 
\begin{table}[htbp]
    \centering
    \begin{tabular}{|c|c|c|c|c|c|c|c|}
        \hline
        \multicolumn{8}{|c|}{\textbf{$\alpha$ is chosen in between 1 and 2}} \\
        \hline
        \multicolumn{2}{|c|}{\multirow{2}{*}{\textbf{$\alpha$$/N/k$}}} & \multicolumn{2}{c|}{\textbf{$k=1$}} & \multicolumn{2}{c|}{\textbf{$k=2$}} & \multicolumn{2}{c|}{\textbf{$k=3$}} \\
        \cline{3-8}
        \multicolumn{2}{|c|}{} & \textbf{$E$} & \textbf{$R_E$} & \textbf{$E$} & \textbf{$R_E$} & \textbf{$E$} & \textbf{$R_E$} \\
        \hline
        \textbf{$\alpha=1.1$} & \textbf{$N=20$} & 8.22e-03 &  & 5.16e-04 &  & 1.10e-05 & \\
        & &  & 1.99 &  & 2.98 &  & 3.93 \\
        \textbf{} & \textbf{$N=40$}& 2.05e-03 &  & 6.45e-05 &  & 7.23e-07 & \\
        & & & 2.00 &  &  3.00 & & 3.99 \\
        \textbf{} & \textbf{$N=80$}& 5.14e-04 &  & 8.07e-06 &  &4.52e-08 & \\
        & & & 2.00 &  & 3.00 & & 3.99 \\
        \textbf{} & \textbf{$N=160$} & 1.28e-04 &  & 1.01e-06 &  & 2.83e-09 & \\
        \hline
        \textbf{$\alpha=1.5$} & \textbf{$N=20$} & 8.21e-03 &  & 5.16e-04 &  & 1.11e-05 & \\
        & &  & 1.98 &  & 2.99 &  & 3.92\\
        \textbf{} & \textbf{$N=40$}& 2.06e-03 &  & 6.43e-05 &  &7.23e-07 & \\
        & & & 2.00 &  & 2.99 &  & 3.99\\
        \textbf{} & \textbf{$N=80$}& 5.14e-04 &  & 8.08e-06 &  & 4.51e-08 & \\
        & & & 2.00 &  & 3.00 &  & 4.00\\
        \textbf{} & \textbf{$N=160$} & 1.28e-04 &  & 1.01e-06 &  & 2.82e-09 & \\
        \hline
       \textbf{$\alpha=1.8$} & \textbf{$N=20$} & 8.28e-03 &  & 5.18e-04&  & 1.12e-05 & \\
        & &  & 1.96 &  & 2.99 &  & 3.89\\
        \textbf{} & \textbf{$N=40$}& 2.12e-03 &  & 6.45e-05 &  &7.22e-07 & \\
        & & & 1.98 &  & 3.01 &  & 3.97\\
        \textbf{} & \textbf{$N=80$}& 5.23e-04 &  & 8.09e-06 &  & 4.48e-08 & \\
        & & & 2.00 &  & 3.00 &  & 4.00\\
        \textbf{} & \textbf{$N=160$} & 1.35e-04 &  & 1.01e-06 &  & 2.80e-09 & \\
        \hline
    \end{tabular}
    \caption{Error and order of convergence for different values of $\alpha$ in example \eqref{examfrac} with the source term taking $N$ elements and polynomial degree $k$.}
    \label{tab:error_table_combined}
\end{table}
We verify the order of convergence of the scheme \eqref{LDGscheme1} with a very small final time $T=0.01$. Table \ref{tab:error_table_combined} displays the convergence rates and $L^2$-errors obtained by implementing the LDG scheme with the initial data $U_{0g}$ for $\alpha=1.1, 1.5$, and $1.8$ respectively.

Afterwards, we investigate the performance of the LDG scheme \eqref{LDGscheme1} across different fractional values of $\alpha$, utilizing smooth initial data and without a source term. 
We examine the nonlinear fractional KdV equation \eqref{fkdv} incorporating the nonlinear flux $f(U) = \frac{1}{2}U^2$. The simulation is extended to a final time $T=1$ incorporating smooth initial conditions with the Dirichlet boundary conditions as described in \cite{dutta2021operator}:
\begin{equation}\label{smoothfrac}
    V_0(x) = U(x,0) =  \frac{1}{4} \sin(x), \qquad x\in(-2\pi,2\pi).
\end{equation}
\begin{figure}
    \centering
    \includegraphics[width=1 \linewidth, height=8cm]{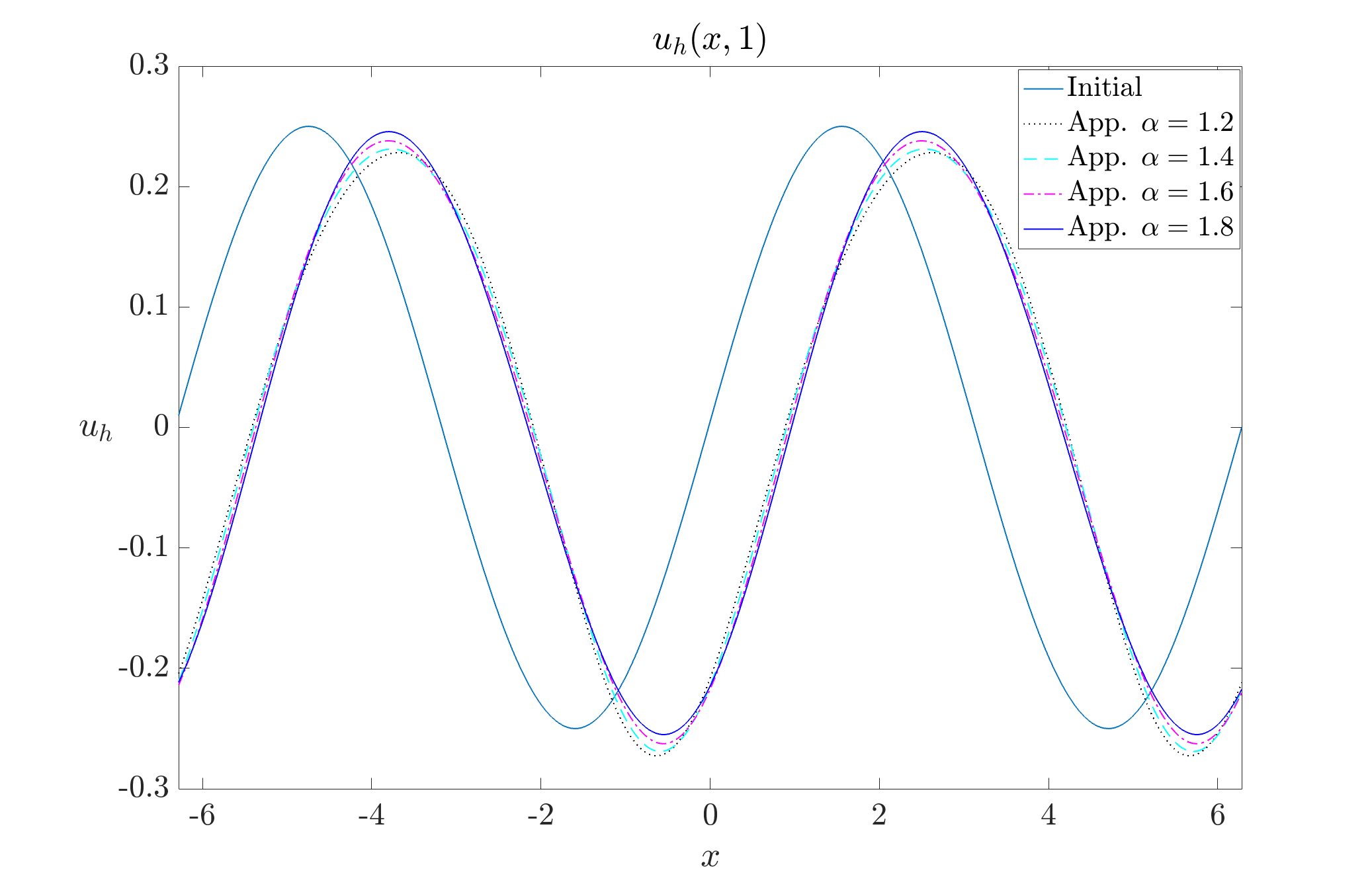}
    \caption{Approximate solution of fractional KdV equation at $T=1$ with $N=320$, $k=3$ for the fractional values $\alpha= 1.2, 1.4, 1.6 $ and 1.8, by choosing smooth initial condition $V_0$ in \eqref{smoothfrac}.}
    \label{fig:fkdv_smooth}
\end{figure}
\begin{figure}
    \centering
    \includegraphics[width=1 \linewidth, height=11cm]{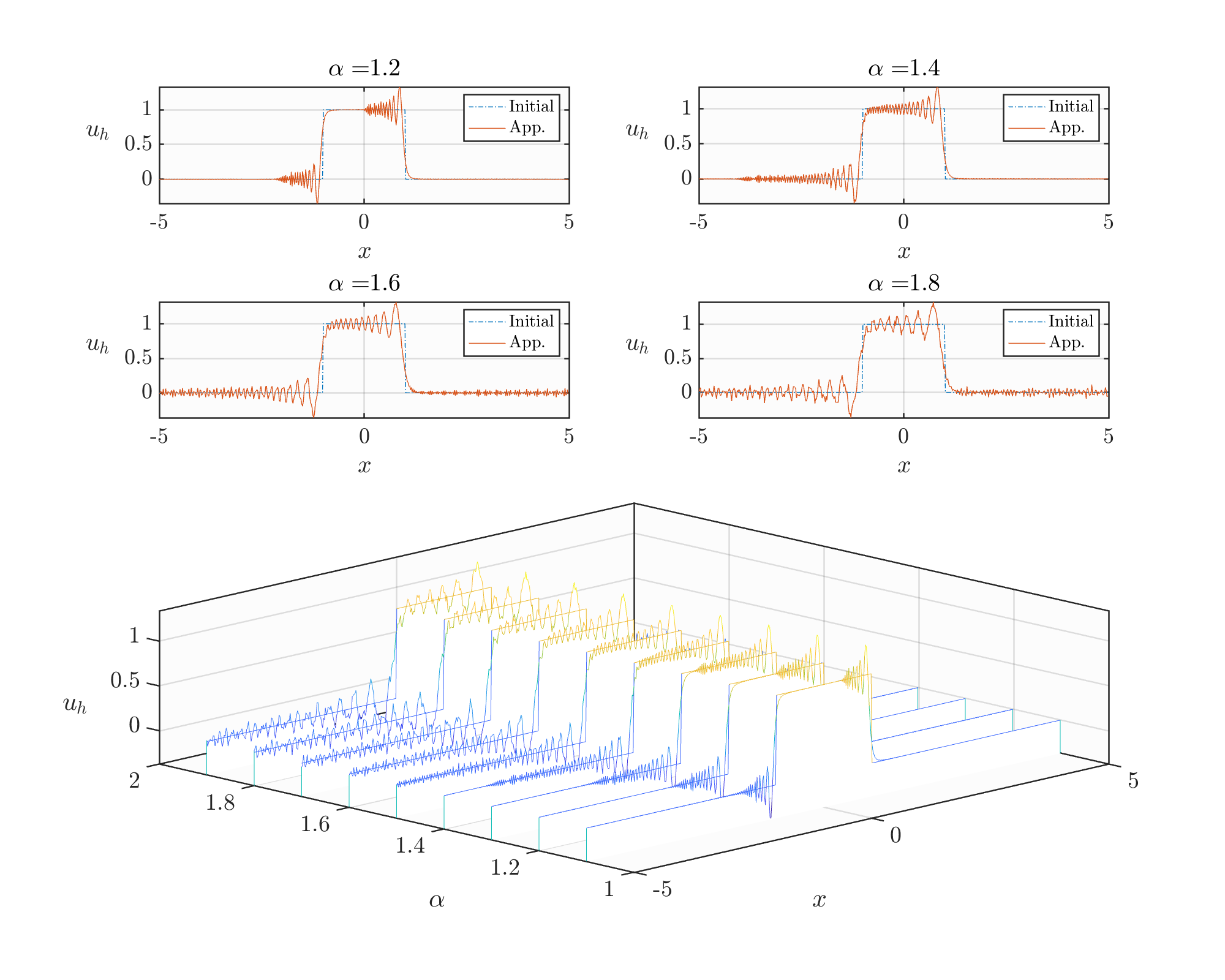}
    \caption{Approximate solution at $T=0.001$ with $N=320$, $k=3$ and initial condition $W_0$ of \eqref{fkdv}.}
    \label{fig:fkdv_L2}
\end{figure}
Due to unavailability of the exact solution, our focus lies solely on observing the behavior of the approximate solution over time. In Figure \ref{fig:fkdv_smooth}, we observe the expected rightward movement of the solution, consistent with the positive sign of the higher derivative for initial condition $V_0$ in \eqref{smoothfrac}. 

Furthermore, we carry out the numerical investigations for non-smooth initial data which is defined in the interval $[-5,5]$ as
\[
W_0(x)= U(x,0) = \begin{cases}
    1, & -1 \leq x \leq 1,\\
    0, & \text{otherwise}.
\end{cases}
\]
Since the initial data $W_0$ exhibits a jump discontinuity at two points, it belongs to $L^2(\Omega)$ but not to any other Sobolev spaces of positive indices. In this case, an exact solution is unavailable of the fractional KdV equation \eqref{fkdv}. However, our primary objective is to understand the qualitative behavior at the discontinuity of the solution produced by our scheme, rather than comparing it or determining its convergence rate. In Figure \ref{fig:fkdv_L2} and \ref{fig:fkdv_L25}, we use the grid points $N = 320$ and final time $T=0.001$ and $T=0.005$ respectively. We observe that the dispersion term forces the solution to evolve into traveling waves. Even at early time stages, approximate solution breaks up in many oscillation waves at the discontinuities. Furthermore, we note that as $\alpha$ increases between $1$ and $2$, the height of the traveling waves increases but the number of oscillation decreases near discontinuity.

 \begin{figure}
    \centering
    \includegraphics[width=1 \linewidth, height=12cm]{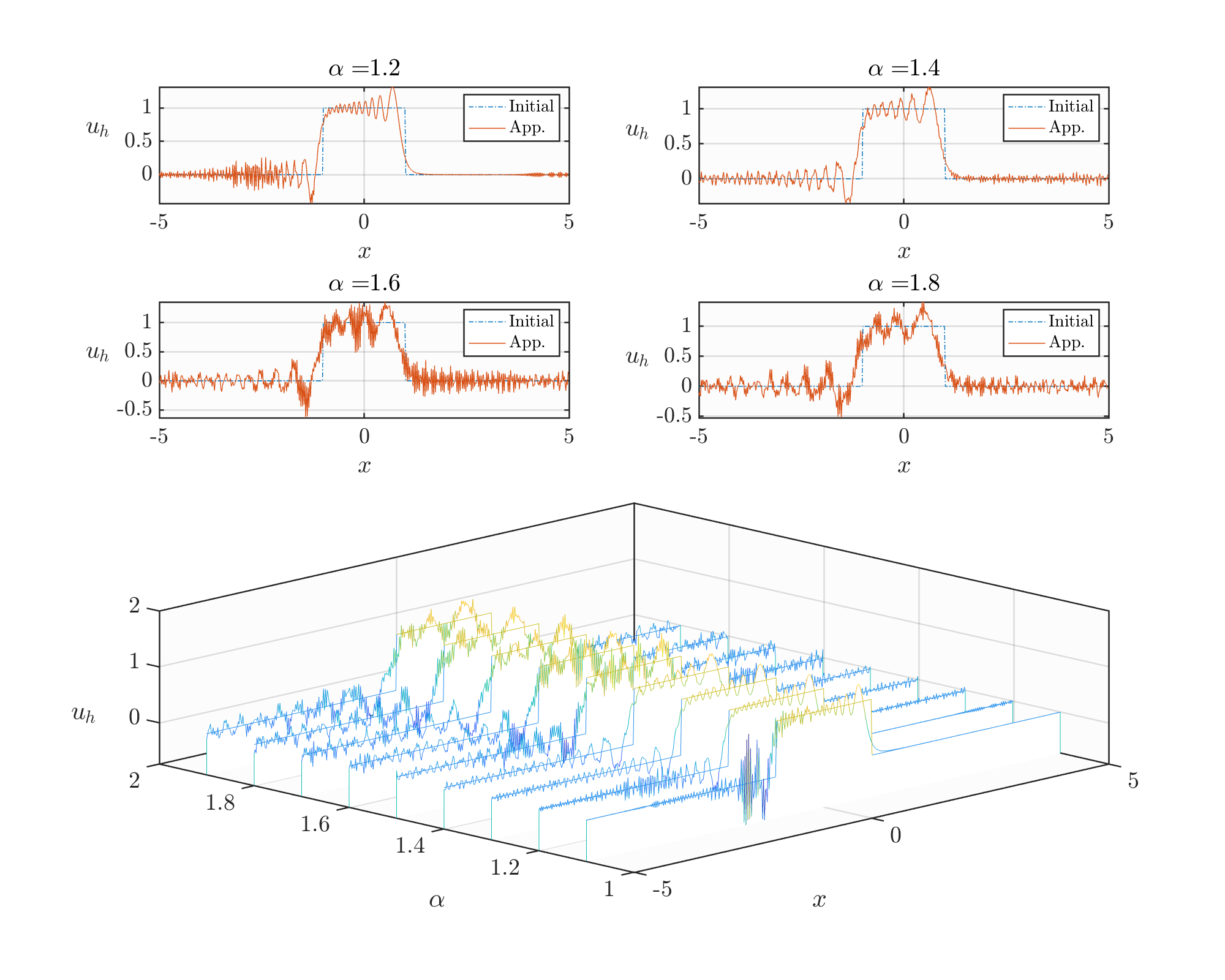}
    \caption{Approximate solution at $T=0.005$ with $N=320$, $k=3$ and initial condition $W_0$ of \eqref{fkdv}.}
    \label{fig:fkdv_L25}
\end{figure}

\subsubsection{Examples of multiple dimensional equation}
We consider the following equation in two space dimensions with the source term $g_d$:
\begin{equation}\label{fkdvh2}
\begin{cases}
     U_t- (-\Delta)_{1}^{\alpha_1/2}U_{x}-(-\Delta)_{2}^{\alpha_2/2}U_{y}=g_d(\mathbf{x},t),  \quad \mathbf{x}:=(x,y) \in [-1,1]^2,~t\in(0,0.001],\\
     U_{0_d}(\mathbf{x}) = \sin(\pi x)\sin(\pi y), \quad (x,y) \in \mathbb{R}^2,
\end{cases}
\end{equation}
where $1<\alpha_1,\alpha_2<2$. Since we do not have the exact solution for \eqref{fkdvh2}, we choose the source term $g_d(\mathbf{x},t)$ as
\begin{equation*}
   g_d(\mathbf{x},t) = e^{-t}\left(-U_{0_d}(\mathbf{x})-\pi^3\sin(\pi y)\Delta^1_{\frac{(\alpha_1-2)}{2}}\cos(\pi x)-\pi^3\sin(\pi x)\Delta^2_{\frac{(\alpha_2-2)}{2}}\cos(\pi y)\right)
\end{equation*}
such that we have the exact solution $U(\mathbf{x},t) = e^{-t}\sin(\pi x)\sin(\pi y)$. 
\begin{table}[htbp]
    \centering
    \begin{tabular}{|c|c|c|c|c|c|c|c|}
        \hline
        \multicolumn{8}{|c|}{\textbf{$(\alpha_1,\alpha_2)$ is chosen in between (1,1) and (2,2)}} \\
        \hline
        \multicolumn{2}{|c|}{\multirow{2}{*}{\textbf{$(\alpha_1,\alpha_2)$$/N/k$}}} & \multicolumn{2}{c|}{\textbf{$k=1$}} & \multicolumn{2}{c|}{\textbf{$k=2$}} & \multicolumn{2}{c|}{\textbf{$k=3$}} \\
        \cline{3-8}
        \multicolumn{2}{|c|}{} & \textbf{$E$} & \textbf{$R_E$} & \textbf{$E$} & \textbf{$R_E$} & \textbf{$E$} & \textbf{$R_E$} \\
        \hline
        \textbf{$(\alpha_1,\alpha_2)=(1.1,1.1)$} & \textbf{$N=18$} & 5.91e-02 &  &1.42e-02 &  & 8.51e-03 & \\
        & &  & 1.80 &  & 3.07 &  & 3.89 \\
        \textbf{} & \textbf{$N=32$}& 2.09e-02 &  & 2.40e-03 &  & 9.11e-04 & \\
        & & & 2.00 &  &  3.04 & & 3.80 \\
        \textbf{} & \textbf{$N=50$}& 8.56e-03 &  & 6.12e-04 &  & 1.67e-04 & \\
        & & & 1.97 &  & 3.00 & & 4.07 \\
        \textbf{} & \textbf{$N=72$} & 4.16e-03 &  & 2.08e-04 &  & 3.78e-05 & \\
        \hline
        \textbf{$(\alpha_1,\alpha_2)=(1.4,1.6)$} & \textbf{$N=18$} & 5.59e-02 &  & 1.31e-02 &  & 8.32e-03 & \\
        & &  & 1.98 &  & 2.94 &  & 3.88\\
        \textbf{} & \textbf{$N=32$}& 1.95e-02 &  & 2.38e-03 &  &8.92e-04 & \\
        & & & 2.17 &  & 3.05 &  & 4.07\\
        \textbf{} & \textbf{$N=50$}& 7.39e-03 &  & 6.09e-04 &  & 1.52e-04 & \\
        & & & 1.99 &  & 3.04 &  & 3.98\\
        \textbf{} & \textbf{$N=72$} & 3.57e-03 &  & 2.03e-04 &  &3.49e-05 & \\
        \hline
       \textbf{$(\alpha_1,\alpha_2)=(1.8,1.8)$} & \textbf{$N=18$} & 5.37e-02 &  & 1.27e-02&  & 8.30e-03& \\
        & &  & 1.97 &  & 2.94 &  & 3.93\\
        \textbf{} & \textbf{$N=32$}& 1.73e-02 &  & 2.34e-03 &  &8.66e-04 & \\
        & & & 1.99 &  & 3.03 &  & 3.99\\
        \textbf{} & \textbf{$N=50$}& 7.15e-03 &  & 6.07e-04 &  & 1.46e-04 & \\
        & & & 2.00 &  & 2.98 &  & 4.00\\
        \textbf{} & \textbf{$N=72$} & 3.45e-03 &  & 2.05e-04 &  & 3.41e-05 & \\
        \hline
    \end{tabular}
    \caption{Error and order of convergence for different values of $(\alpha_1,\alpha_2)$ for the 2D example \eqref{fkdvh2} with source term at time $T=0.001$ taking $N$ elements and polynomial degree $k$.}
    \label{tab:error_table_combined2d}
\end{table}
In this example, we choose number of triangles as $N=2m^2$, $m=3,4,\cdots$. The $L^2$-errors and numerical order of accuracy are depicted in Table \ref{tab:error_table_combined2d}. For various values of $(\alpha_1,\alpha_2)$, we have obtained the optimal rates of convergence for polynomials with degree up to $3$.

Next we consider the following equation without the source term
\begin{equation}\label{fkdvh2dws}
\begin{cases}
     U_t- (-\Delta)_{1}^{\alpha_1/2}U_{x}-(-\Delta)_{2}^{\alpha_2/2}U_{y}=0,  \quad \mathbf{x}:=(x,y) \in [-\pi,\pi]^2,~t\in(0,0.1],\\
     U_{0_d}(\mathbf{x}) = \sin(x)\sin(y), \quad (x,y) \in [-\pi,\pi]^2.
\end{cases}
\end{equation}
Whenever $(\alpha_1,\alpha_2)=(2,2)$, the equation \eqref{fkdvh2dws} becomes $U_t+U_{xxx}+U_{yyy}=0$, and it has the exact solution which is given by $U(\mathbf{x},t) = \sin(x+t)\sin(y+t)$. We wish to investigate the approximate solution of \eqref{fkdvh2dws} whenever $(\alpha_1,\alpha_2)$ is close to $(2,2)$.  
\begin{table}[htbp]
    \centering
    \begin{tabular}{|c|c|c|c|c|c|c|}
    \hline
        \multicolumn{7}{|c|}{$(\alpha_1,\alpha_2)=(1.999,1.999)$} \\
        \hline
        \multirow{2}{*}{\textbf{$N/k$}} & \multicolumn{2}{c|}{\textbf{$k=1$}} & \multicolumn{2}{c|}{\textbf{$k=2$}} & \multicolumn{2}{c|}{\textbf{$k=3$}} \\
        \cline{2-7}
        & \textbf{$E$} & \textbf{$R_E$} & \textbf{$E$} & \textbf{$R_E$} & \textbf{$E$} & \textbf{$R_E$} \\
        \hline
          \textbf{$N=32$} & 5.97e-01 &  & 5.74e-02 &  & 3.20e-02& \\  
        &  & 1.88 &  & 2.78 & & 3.72\\  
         \textbf{$N=50$}& 2.58e-01 &  & 1.66e-02 &  & 6.07e-03 & \\
        & & 2.10 &  & 2.81 & & 3.88  \\
         \textbf{$N=72$} & 1.20e-01 &  & 5.98e-03 &  & 1.47e-03& \\  
        &  & 2.06 &  & 2.91 & & 3.94 \\  
         \textbf{$N=98$}& 6.39e-02 &  & 2.44e-03 &  & 4.36e-04 & \\
        \hline
    \end{tabular}
    \caption{Error and order of convergence for $(\alpha_1,\alpha_2)$ close to (2,2) to compare equation \eqref{fkdvh2dws} with the equation $U_t+U_{xxx}+U_{yyy}=0$ at time $T=0.1$ taking $N$ elements and polynomial degree $k$.}
    \label{tab:error_tableKDV2d} 
\end{table}
The Table \ref{tab:error_tableKDV2d} provides $L^2$-errors by using the number of triangles $N=2m^2,~m=4,5,\cdots$. It is evident that the proposed LDG scheme with $P^k$ elements provides a uniform $(k+1)$-th order of accuracy even for the coarser meshes.
\section{Concluding remarks}\label{sec7}
We have developed a stable LDG scheme for the fractional KdV equation \eqref{fkdv} and \eqref{fkdvhd2} in one space dimension and multiple space dimensions. Although we have achieved  a theoretical order of convergence of $k+\frac{1}{2}$ for the nonlinear flux, the numerical experiments demonstrate the optimal order of convergence $\mathcal{O}(h^{k+1})$. Through extensive numerical experiments covering various values of $\alpha$, we have demonstrated the efficiency of the LDG scheme for one dimensional and multiple space dimensional problems. In cases where exact solutions were not available to validate the LDG scheme, we have introduced an additional source term to obtain the exact solutions, allowing us to obtain the convergence rates. 

Indeed, while we have established the error estimates for the spatial discretization, obtaining error estimates for the fully discrete scheme with higher order time-stepping schemes remains an ongoing task. This endeavor demands additional insights and techniques, which we plan to address in our future work. Furthermore, extending the stability analysis of the fully discrete scheme to the nonlinear case is another important aspect that we intend to explore.
Our future work will focus on proving the stability and obtaining error estimates in the nonlinear setup of the higher order fully discrete scheme.

\section*{Declaration} 
We have not used any data to conduct this work. The authors declare that this work does not have any conflicts of interest.

\section*{Acknowledgements}
The authors would like to thank Chi-Wang Shu for his insightful comments during the execution of this work.




\begin{thebibliography}{99}

\bibitem{abdelouhab1989nonlocal}
L.  Abdelouhab, J. L.  Bona, M. Felland, and J. -C. Saut.
\newblock{Nonlocal models for nonlinear, dispersive waves.}
\newblock{\em Physica D: Nonlinear Phenomena},  40 (1989), no. 3, 360--392.





\bibitem{bassi1997high}
	F. Bassi and S. Rebay.
	\newblock{A high-order accurate discontinuous finite element method for the numerical solution of the compressible Navier--Stokes equations.}
	\newblock{\em Journal of Computational Physics}, 131 (1997), no. 2, 267--279.

\bibitem{bona2013conservative}
	J. L. Bona, H. Chen, O. Karakashian and Y. Xing.
	\newblock{Conservative, discontinuous Galerkin--methods for the generalized Korteweg--de Vries equation.}
	\newblock{\em Mathematics of Computation}, 82 (2013), no. 283, 1401--1432.
 
\bibitem{Bona1975KdV}
	J. L. Bona and R. Smith.
	\newblock{The initial-value problem for the Korteweg-de Vries equation.}
	\newblock{\em Philosophical Transactions of the Royal Society of London. Series A, Mathematical and Physical Sciences}, 278 (1975), no. 1287, 555--601.

\bibitem{carpenter1994fourth}
H. M. Carpenter and A. C. Kennedy.
\newblock{Fourth-order 2N-storage Runge-Kutta schemes.}
\newblock{\em  NASA TM 109112, NASA Langley Research Center},  (1994).

 \bibitem{Ciarlet}
P. G. Ciarlet.
\newblock{The Finite Element Method for Elliptic Problems.}
\newblock{\em Society for Industrial and Applied Mathematics},  (2002).



 \bibitem{cockburn2001superconvergence}
B. Cockburn, G. Kanschat, I. Perugia, and D. Sch{\"o}tzau.
\newblock{Superconvergence of the local discontinuous Galerkin method for elliptic problems on Cartesian grids.}
\newblock{\em SIAM Journal on Numerical Analysis}, 39 (2001), no. 1, 2264--285.

 \bibitem{cockburn2012discontinuous}
B. Cockburn, G. E. Karniadakis  and  C.-W. Shu.
\newblock{Discontinuous Galerkin methods: theory, computation and applications.}
\newblock{\em Springer Science \& Business Media}, 11 (2012).


 \bibitem{cockburn1998local}
B. Cockburn and  C.-W. Shu.
\newblock{The local discontinuous Galerkin method for time-dependent convection-diffusion systems.}
\newblock{\em SIAM Journal on Numerical Analysis}, 35 (1998), no. 6, 2440--2463.






\bibitem{deng2013local}
W. H. Deng and J. S. Hesthaven.
\newblock{Local Discontinuous Galerkin methods for fractional diffusion equations.}
\newblock{\em ESAIM: Mathematical Modelling and Numerical Analysis}, 47 (2013), no. 6, 1845--1864.



	
\bibitem{di2012hitchhikers}
	E. Di Nezza, G. Palatucci and E. Valdinoci.
	\newblock{Hitchhiker's guide to the fractional Sobolev spaces.}
	\newblock{\em Bulletin des Sciences Mathématiques}, 136 (2012), no. 5, 521--573.

 
 \bibitem{dutta2016convergence}
	R. Dutta, H. Holden, U. Koley, and N. H. Risebro.
	\newblock{Convergence of finite difference schemes for the Benjamin--Ono equation.}
	\newblock{\em Numerische Mathematik}, 134 (2016), no. 2, 249--274.
	
\bibitem{dutta2015convergence}
	R. Dutta, U. Koley, and N. H. Risebro.
	\newblock{Convergence of a higher order scheme for the Korteweg–de Vries equation.}
	\newblock{\em SIAM Journal on Numerical Analysis}, 53 (2015), no. 4, 1963--1983.
	
\bibitem{dutta2016note}
	R. Dutta and N. H. Risebro.
	\newblock{A note on the convergence of a Crank–Nicolson scheme for the KdV equation.}
	\newblock{\em Int. J. Numer. Anal. Model}, 13 (2016), no. 5, 657--675.
	
\bibitem{dutta2021operator}
	R. Dutta and T. Sarkar.
	\newblock{Operator splitting for the fractional Korteweg-de Vries equation.}
	\newblock{\em Numerical Methods for Partial Differential Equations}, 37 (2021), no. 6, 3000--3022.



 \bibitem{dwivedi2024local}
	M. Dwivedi and T. Sarkar.
	\newblock{A Local discontinuous Galerkin method for the Benajamin-Ono equationa}.
	\newblock{\em arXiv preprint arXiv:2405.08360}, (2024).

 \bibitem{dwivedi2023convergence}
	M. Dwivedi and T. Sarkar.
	\newblock{Convergence of a conservative Crank-Nicolson finite difference scheme for the KdV equation with smooth and non-smooth initial data}.
	\newblock{\em arXiv preprint arXiv:2312.14454}, (2023).
 

\bibitem{dwivedi2024fully}
	M. Dwivedi and T. Sarkar.
	\newblock{Fully discrete finite difference schemes for the fractional Korteweg-de Vries equation}.
	\newblock{\em Journal of Scientific Computing},  101 (2024), no. 30. 

  \bibitem{dwivedi2023stability}
	M. Dwivedi and T. Sarkar.
	\newblock{Stability and convergence analysis of a Crank-Nicolson Galerkin scheme for the fractional Korteweg-de Vries equation}.
	\newblock{\em SMAI Journal of Computational Mathematics}, 10 (2024), 107--139.


\bibitem{el2006finite}
	A. M. A. El-Sayed and M. Gaber.
	\newblock{On the finite Caputo and finite Riesz derivatives.}
	\newblock{\em   Electronic Journal of Theoretical Physics}, 3 (2006), no. 12, 81--95.
 
\bibitem{ervin2006variational}
	V. J. Ervin and J. P. Roop.
	\newblock{Variational formulation for the stationary fractional advection dispersion equation.}
	\newblock{\em  Numerical Methods for Partial Differential Equations}, 22 (2006), no. 3, 558--576.

  \bibitem{fokas1981hierarchy}
	A. S. Fokas and B. Fuchssteiner.
	\newblock{The hierarchy of the Benjamin-Ono equation.}
	\newblock{\em  Physics letters A}, 86 (1981), no. 6-7, 341--345.

 \bibitem{fonseca2013ivp}
	G. Fonseca, F. Linares, and G. Ponce.
	\newblock{The IVP for the dispersion generalized Benjamin-Ono equation in weighted Sobolev spaces.}
	\newblock{\em  Ann. Inst. H. Poincar\'e Anal. Non Lin\'eaire}, 30 (2013), no. 5, 763--790.
	
	
\bibitem{galtung2018convergent}
	S. T. Galtung.
	\newblock{Convergent Crank–Nicolson Galerkin Scheme for the Benjamin–Ono Equation.}
	\newblock{\em Discrete and Continuous Dynamical Systems}, 38 (2018), no. 3, 1243--1268.



 \bibitem{herr2010differential}
	E. Herr, A. D. Ionescu, C. E. Kenig,
and H. Koch.
	\newblock{ A Para-differential renormalization technique
for nonlinear dispersive equations}.
	\newblock{\em Communication in Partial Differential Equation}, 35 (2010), no. 10, 1827--1875.






\bibitem{hesthaven2007nodal}
	J. S. Hesthaven and T. Warburton.
	\newblock{Nodal discontinuous Galerkin methods: algorithms, analysis, and applications}.
	\newblock{\em Springer Science and Business Media}, 52 (2007).


 
	
\bibitem{holden2015convergence}
	H. Holden, U. Koley and N. H. Risebro.
	\newblock{Convergence of a fully discrete finite difference scheme for the Korteweg–de Vries equation.}
	\newblock{\em IMA Journal of Numerical Analysis}, 35 (2015), no. 3, 1047--1077.



 
\bibitem{kato1983cauchy}
	T. Kato.
	\newblock{ On the Cauchy problem for the (generalized) Korteweg–de Vries equation.}
	\newblock{\em Studies in Appl. Math. Ad. in Math. Suppl. Stud.}, (1983), no. 8, 93--128.


\bibitem{kenig1994generalized}
	C. E. Kenig, G. Ponce and L. Vega.
	\newblock{On the generalized Benjamin-Ono equation.}
	\newblock{\em Transactions of the American Mathematical Society}, 342 (1994), no. 1, 155--172.

 
\bibitem{kenig1991well}
	C. E. Kenig, G. Ponce and L. Vega.
	\newblock{Well-posedness of the initial value problem for the Korteweg-de Vries equation.}
	\newblock{\em Journal of the American Mathematical Society}, 4 (1991), no. 2, 323--347.



\bibitem{kilbas2006theory}
	 A. A. Kilbas, H. M. Srivastava and J. J. Trujillo.
	\newblock{Theory and applications of fractional differential equations.}
	\newblock{\em (North-Holland Mathematics Studies), Elsevier Science Inc., New York, USA}, 204 (2006).
 
 





\bibitem{klein2015numerical}
	 C. Klein, and J.-C. Saut.
	\newblock{A numerical approach to blow-up issues for dispersive perturbations of Burgers' equation}.
	\newblock{\em Physica D: Nonlinear Phenomena}, 295 (2015), 46--65.



\bibitem{korteweg1895xli}
	 D. J. Korteweg and G. d. Vries.
	\newblock{On the change of form of long waves advancing in a rectangular canal, and on a new type of long stationary waves.}
	\newblock{\em The London, Edinburgh, and Dublin Philosophical Magazine and Journal of Science}, 39 (1895), no. 240, 422--443.


 \bibitem{lesaint1974finite}
	 P. Le Saint and P. A. Raviart.
	\newblock{On a finite element method for solving the neutron transport equation.}
	\newblock{\em Mathematical Aspects of Finite Elements in Partial Differential Equations},  (1974), 89--145.

\bibitem{levy2004local}
	 D. Levy, C.-W. Shu and J. Yan.
	\newblock{Local discontinuous Galerkin methods for nonlinear dispersive equations.}
	\newblock{\em Journal of Computational Physics}, 196 (2004), no. 2, 751--772.
 
\bibitem{liu2006local}
	 H. Liu and J. Yan.
	\newblock{A local discontinuous Galerkin method for the Korteweg--de Vries equation with boundary effect.}
	\newblock{\em Journal of Computational Physics}, 215 (2006), no. 1, 197--218.



 





 




\bibitem{muslih2010riesz}
	 S. I. Muslih and O. P. Agrawal.
	\newblock{Riesz fractional derivatives and fractional dimensional space.}
	\newblock{\em International Journal of Theoretical Physics}, 49
(2010), 270--275.

\bibitem{podlubny1998fractional}
	 I. Podlubny.
	\newblock{Fractional differential equations: an introduction to fractional derivatives, fractional differential equations, to methods of their solution and some of their applications}.
	\newblock{\em Elsevier},
(1998).


\bibitem{qiu2015nodal}
L. Qiu, W. H. Deng and J. S. Hesthaven.
\newblock{Nodal discontinuous Galerkin methods for fractional diffusion equations on 2D domain with triangular meshes.}
\newblock{\em Journal of Computational Physics}, 298 (2015), 678--694.

\bibitem{reed1973triangular}
	W. H. Reed and T. R. Hill.
	\newblock{Triangular mesh methods for the neutron transport equation.}
	\newblock{\em Technical report LA-UR-73-479, Los Alamos Scientific Lab}, (1973).
 
\bibitem{sjoberg1970korteweg}
	A. Sjöberg.
	\newblock{On the Korteweg-de Vries equation: Existence and uniqueness.}
	\newblock{\em Journal of Mathematical Analysis and Applications}, 29 (1970), no. 3, 569--579.






 
 
\bibitem{tao2004global}
	T. Tao.
	\newblock{Global well-posedness of the Benjamin–Ono equation in $H^1(\mathbb{R})$.}
	\newblock{\em Journal of Hyperbolic Differential Equations}, 1 (2004), no. 1, 27--49.
	
\bibitem{thomee1998numerical}
	V. Thom{\'e}e and A. S. Vasudeva Murthy.
	\newblock{A numerical method for the Benjamin–Ono equation.}
	\newblock{\em BIT Numerical Mathematics}, 38 (1998), 597--611.
 

 \bibitem{xu2014discontinuous}
	Q. Xu and J. S. Hesthaven.
	\newblock{Discontinuous Galerkin method for fractional convection-diffusion equations}.
	\newblock{\em SIAM Journal on Numerical Analysis}, 52 (2014), no. 1, 405--423.


\bibitem{xu2005local}
Y. Xu and C.-W. Shu.
\newblock{Local discontinuous Galerkin methods for nonlinear Schr{\"o}dinger equations.}
\newblock{\em Journal of Computational Physics}, 205 (2005), no. 1, 72--97.

\bibitem{xu2007error}
Y. Xu and C.-W. Shu.
\newblock{Error estimates of the semi-discrete local discontinuous Galerkin method for nonlinear convection--diffusion and KdV equations.}
\newblock{\em Computer methods in Applied Mechanics and Engineering}, 196 (2007), no. 37-40, 3805--3822.

\bibitem{xu2010local}
Y. Xu and C.-W. Shu.
\newblock{Local discontinuous Galerkin methods for high-order time-dependent partial differential equations.}
\newblock{\em Communications in Computational Physics}, 7 (2010), no. 1, 1--46.









\bibitem{yan2002local}
J. Yan and C.-W. Shu.
\newblock{A local discontinuous Galerkin method for KdV type equations.}
\newblock{\em SIAM Journal on Numerical Analysis}, 40 (2002), no. 2, 769--791.




\bibitem{yang2010numerical}
Q. Yang, F. Liu and I. Turner.
\newblock{Numerical methods for fractional partial differential equations with Riesz space fractional derivatives.}
\newblock{\em Applied Mathematical Modelling}, 34 (2010), no. 1, 200--218.

\bibitem{zhang2004error}
Q. Zhang and C.-W. Shu.
\newblock{Error estimates to smooth solutions of Runge--Kutta discontinuous Galerkin methods for scalar conservation laws.}
\newblock{\em SIAM Journal on Numerical Analysis}, 42 (2004), no. 2, 641--666.

\end{thebibliography}
\end{document}